\DeclareMathAlphabet{\mathbfscr}{OMS}{mdugm}{b}{n}
\newtheorem{theorem}{Theorem}[section] 
\newtheorem{definition}{Definition}[section] 
\newtheorem{proposition}{Proposition}[section] 
\newtheorem{lemma}{Lemma}[section]
\newtheorem{assumption}{Assumption}
\theoremstyle{remark}
\newcommand{\abs}[1]{ \left\lvert#1\right\rvert} 
\newcommand{\norm}[1]{\left\lVert#1\right\rVert} 
\newcommand\Given[1][]{\:#1\big\vert\:}
\DeclarePairedDelimiter\floor{\lfloor}{\rfloor} 
\newcommand{\subf}[2]{%
  {\footnotesize\begin{tabular}[t]{@{}c@{}}
  #1\\#2
  \end{tabular}}%
}
\title{\textbf{Variance Estimation in
    \\ Adaptive Sequential Monte Carlo
}} 
\author{}
\date{} 
\begin{document}
\maketitle

{\bf Qiming Du}\\
{\it  LPSM, Sorbonne Universit\'e \& CERMICS, France}\\
\textsf{qiming.du@upmc.fr}
\bigskip

{\bf Arnaud Guyader\footnote{Corresponding author.}}\\
{\it LPSM, Sorbonne Universit\'e \& CERMICS, France }\\
\textsf{arnaud.guyader@upmc.fr}
\bigskip

\abstract{Sequential Monte Carlo (SMC) methods represent a classical set of techniques to simulate a sequence of probability measures through a simple selection/mutation mechanism. However, the associated selection functions and mutation kernels usually depend on tuning parameters that are of first importance for the efficiency of the algorithm. A standard way to address this problem is to apply Adaptive Sequential Monte Carlo (ASMC) methods, which consist in exploiting the information given by the history of the sample to tune the parameters. This article is concerned with variance estimation in such ASMC methods. Specifically, we focus on the case where the asymptotic variance coincides with the one of the 
 ``limiting'' Sequential Monte Carlo algorithm as defined by Beskos \textit{et al.} \cite{beskos2016ASMC}. 
  We prove that, under natural assumptions, the estimator introduced by Lee and Whiteley \cite{lee2018var} in the nonadaptive case (i.e., SMC) is also a consistent estimator of the asymptotic variance for
  ASMC methods. To do this, we introduce a new
   estimator that is expressed in terms of coalescent tree-based
  measures, and explain its connection with the previous one. Our estimator is constructed by
  tracing the genealogy of the associated Interacting Particle System.
  The tools we use connect the study of Particle Markov Chain Monte Carlo
 methods and the variance estimation problem in
  SMC methods. As such, they may give some new insights when dealing with complex
  genealogy-involved problems of Interacting Particle Systems in more general scenarios.}
  
  \bigskip

\noindent \emph{Index Terms} --- Sequential Monte Carlo, CLT, Variance estimation, Interacting particle systems, Feynman-Kac semigroups.\medskip

\noindent \emph{2010 Mathematics Subject Classification}: 47D08, 65C35, 60J80, 65C05.

\newpage
{
\hypersetup{linkcolor=black}
\tableofcontents
}
\section{Introduction}
Sequential Monte Carlo (SMC) methods are classical Monte Carlo
techniques widely used in
Bayesian inference, filtering, rare events simulations and many
other fields (see for example \cite{smc2001} and references therein). The principle is to approximate a sequence of
probability measures $(\eta_n)_{n\geq 0}$ by
simulating an Interacting Particle System (IPS) via an importance sampling and
resampling mechanism. The flow of measures is then approximated by the
empirical version $(\eta_n^N)_{n\geq 0}$. 
A lot of convergence results when the sample size $N$ goes to infinity can be found in
the literature (see for example \cite{del2004feynman-kac,MR3060209}). 

In practice, when applying these SMC methods, it is also very
important to have a control on the constructed estimators, such as confidence
intervals. For this, if one has a CLT type
theorem for the test function $f$ such as (see, e.g., \cite{del2004feynman-kac,MR2153989,MR2458190})
$$
\sqrt{N}
\left(\eta_n^N(f) - \eta_n(f)\right) 
\xrightarrow[N\rightarrow\infty]{\mathrm{d}}
\mathcal{N}(0,\sigma_{n}(f)^2),
$$
it suffices to provide a consistent estimator
$\sigma_n^N(f)$ of $\sigma_{n}(f)$ since Slutsky's lemma then ensures that
$$
\frac{
\sqrt{N}
\left(\eta_n^N(f) - \eta_n(f)\right) 
}{\sigma_n^N(f)}
\xrightarrow[N\rightarrow\infty]{\mathrm{d}}
\mathcal{N}(0,1).
$$
A natural way to achieve this aim is by
resimulating the IPS independently many times and by estimating
$\sigma_{n}(f)^2$ with the crude variance
estimator. However, since a single run of the algorithm may take a lot of time,
this is usually intractable. In
addition, as the estimator $\eta_n^N(f)$ of $\eta_n(f)$ provided by SMC is typically biased, it is also nontrivial to implement parallel computing
for a large number of IPS with $N$ relatively small.  
As a consequence, a variance estimator available with a single run of the simulation
is of crucial interest for applications. 

The first consistent estimator of this type
was proposed by Chan and Lai
\cite{chan2013}, by using the ancestral information encoded in the genealogy of
the associated IPS. Then, Lee and Whiteley \cite{lee2018var} proposed an unbiased variance
estimator for the unnormalized measures $\gamma_n^N$ 
and a term by term estimator, 
with insights on the genealogy of the IPS. Both estimators are
studied in the classical SMC framework, meaning in a nonadaptive setting where the weight functions and the Markov
proposal kernels are fixed a priori. 

In this article, we deal with \emph{adaptive} SMC methods. 
At each resampling step, the weight functions and/or Markov proposal kernels
depend upon the history of the simulated process. 
The idea is to approximate an ideal ``limiting'' SMC algorithm, which is usually
out of reach, by exploiting the induced information tracked by some summary statistics.
Such approaches are expected to be more efficient and more automated than the nonadaptive ones
since they require less user-specified tuning parameters.

Specifically, we are interested in the case where the adaptive SMC algorithm is
asymptotically identical to a ``limiting'' SMC algorithm. More precisely,
we expect the asymptotic variance of the adaptive SMC algorithm to be identical
to the ``ideal'' nonadaptive one. This kind of stability property is at the
core of the pair of articles \cite{beskos2016ASMC} and
\cite{cerou2016fluctuation}. The framework discussed in the present paper is
just a slightly generalized version of the one presented in Section 2 in
\cite{beskos2016ASMC} but still ensures the stability property of their Theorem
2.3. 

Another remark is about  Adaptive Multilevel Splitting (AMS), also known as Subset Simulation, see for example \cite{Au2001263,au:901,cg2, cerou2012,cerou2016fluctuation}.
This is a class of ASMC algorithms dedicated to rare event estimation and simulation. 
Despite the fact that our assumptions are not verified in the AMS framework, we
expect that the
variance estimator would also work in this context. Nonetheless, we believe that this case requires a specific analysis as well as different assumptions. To account for this, one can notice that the proofs in \cite{cerou2016fluctuation} and \cite{beskos2016ASMC} differ in many points, although the take-home message is the same. In a nutshell, the main difficulty in the AMS framework comes from the indicator functions in the potential functions as well as in the transitions kernels, leading to severe regularity issues when dealing with CLT type results and asymptotic variances. 

From a theoretical viewpoint, to prove the consistency of the variance
estimator proposed in \cite{lee2018var}, we were not able to adapt their
technical tools. This is due to the additional randomness brought by the weight
functions and Markov kernels in the adaptive case.
As a consequence, we propose to
develop new techniques in order to estimate the terms $\Gamma_n^b$ that appear in the
expansion of the variance given in \cite{cerou2011var}. The mains ideas are: first,
our term by term estimator is consistent and, second, the difference
between our estimator and the one of Lee and Whiteley goes to $0$ in probability
when the sample size $N$ goes to infinity. However, in practice, one uses
the estimator proposed by Lee and Whiteley, which is computationally very simple, while the one we introduce here may
be seen as a handy tool to prove the consistency of the former.


The construction of our estimators $\Gamma_{n,N}^b$ uses
the idea of many-body Feynman-Kac models, which were designed in \cite{del2016gibbs} to study
propagation of chaos properties of Conditional Particle Markov Chain Monte
Carlo methods \cite{Andrieu2010pmcmc}. Above the specific context of the
present article, these connections may give some insights on
how to deal with complex genealogy-involved problems in more general settings.

\subsection*{Notation}

Before proceeding, let us provide some notation that will be of constant use in the following.
\begin{itemize}
  \item
    For any Polish space $E$, we denote respectively by $\mathcal{M}(E)$,
    $\mathcal{M}_+(E)$ and $\mathcal{P}(E)$ the sets of signed
    finite measures, nonnegative finite measures,
    and probability measures on
    $E$ endowed with Borel $\sigma$-algebra $\mathcal{B}(E)$,  
  while $\mathcal{B}_b(E)$ denotes the collection of the
    bounded measurable functions from $(E,\mathcal{B}(E))$ to
    $(\mathbf{R},\mathcal{B}(\mathbf{R}))$ equipped with uniform
    norm $\norm{\cdot}_{\infty}$.
  \item
    For any $\mu\in \mathcal{M}(E)$ and any test function $f \in
    \mathcal{B}_b(E)$, we write  
    $$
   \mu(f):= \int_E f(x) \mu(dx).
    $$ 
    A finite nonnegative kernel $Q$ from $(E,\mathcal{B}(E))$ to
    $(F,\mathcal{B}(F))$ is a function
    $$
    Q: E\times \mathcal{B}(F)\mapsto \mathbf{R}_+
    $$
    such that, for all $x\in E$, $Q(x,\cdot)\in\mathcal{M}_+(F)$ and, for all $A \in
    \mathcal{B}(F)$, $Q(x,A)$ is a $\mathcal{B}(E)$-measurable
    function. 
    We say that $Q$ is a Markov transition kernel if, moreover, for all $x\in E$, $Q(x,\cdot)$ is a probability measure in
    $\mathcal{P}(F)$.
    For a signed measure $\mu \in \mathcal{M}(E)$
    and a test function $f\in \mathcal{B}_b(F)$, we denote
   respectively  by $\mu Q \in \mathcal{M}(E)$ and $Q(f) \in
    \mathcal{B}_b(E)$ the measure and function respectively defined by
    $$
    \mu Q (A) := \int_E \mu(dx) Q(x,A)
    \qquad \forall A \in \mathcal{B}(F),
    $$
    and
    $$
    Q (f)(x) := \int_F  Q(x,dy) f(y)
    \qquad \forall x \in E.
    $$
    Given two finite nonnegative kernels $Q_1$ and $Q_2$ respectively from
    $E_0$ to $E_1$ and $E_1$ to $E_2$,
    $Q_1Q_2$ is the nonnegative kernel from
    $E_0$ to $E_2$ defined by
    $$
    Q_1Q_2(x,A) := \int_{E_1} Q_1(x,dy) Q_2(y,A)
    \qquad \forall (x,A) \in E_0\times \mathcal{B}(E_2).
    $$
  \item
    For two functions $f,g \in \mathcal{B}(E)$, their tensor product is the function
    $$
    f \otimes g:E^2\ni (x,y)\mapsto
    f(x)g(y)\in \mathbf{R},
    $$
    and, in particular, we denote $f^{\otimes 2}:= f\otimes f$. 
    For two finite nonnegative kernels $Q$ and $H$ from $(E,\mathcal{B}(E))$ to
    $(F,\mathcal{B}(F))$,
    we denote
    $$
    (Q \otimes H) 
    \left(
      (x,y),(A,B)
    \right)
    :=
    Q(x,A) \times H(y,B)
    $$
    for all
    $(x,y)\in E\times E$
    and all
    $(A,B)\in \mathcal{B}(F)\otimes \mathcal{B}(F)$.
    Accordingly, we write $Q^{\otimes 2} := Q\otimes Q$.
  \item In order to define the coalescent tree-based measures of size 2, we introduce
    the transition operators $C_0$ and $C_1$ as
    $$
    C_0((x,y),d(x',y'))
    := 
    \delta_{(x,y)} d(x',y'),
    $$
    and
    $$
    C_1((x,y),d(x',y'))
    := 
    \delta_{(x,x)} d(x',y').
    $$
    In other words, for any measurable
    function $H:E\times E\mapsto \mathbf{R}$, we have
    $$
    C_0(H)(x,y) = H(x,y)
    \qquad
    \text{and}
    \qquad
    C_1(H)(x,y) = H(x,x).
    $$
  \item 
    For all $\bm{x} = (x^1,\dots,x^N) \in E^{N}$, we define
    the empirical measure associated to $\bm{x}$ by
    $$
    m(\bm{x}) := \frac{1}{N}\sum_{i = 1}^N
    \delta_{x^{i}}
    \in \mathcal{P}(E).
    $$
    We also denote
    $$
    m^{\otimes 2}(\bm{x}) 
    := \frac{1}{N^2}\sum_{i,j}
    \delta_{(x^i,x^j)}
    \in \mathcal{P}(E^2),
    $$
    and
    $$
    m^{\odot 2}(\bm{x}) 
    := \frac{1}{N(N-1)}\sum_{i\neq j}
    \delta_{(x^i,x^j)}
    \in \mathcal{P}(E^2).
    $$
    A straightforward computation shows that
    \begin{equation}\label{lasjcn}
    m^{\otimes 2}(\bm{x})
    =
    \frac{N-1}{N}m^{\odot 2}(\bm{x})
    C_0
    +
    \frac{1}{N}m^{\odot 2}(\bm{x})
    C_1.
    \end{equation}
    With a slight abuse of notation, considering $[N] := \{1,2,\dots,N\}$, we write
    $$
    m([N]) := \frac{1}{N}\sum_{i=1}^N\delta_i
    \quad
    \text{and}
    \quad
    m^{\otimes 2}([N]) := m([N])\otimes m([N]). 
    $$
\end{itemize}

\section{Adaptive Sequential Monte Carlo}

This section presents the formal definition and the regularity assumptions
of the ASMC framework studied in this article. The motivation is mainly from
ASMC via summary statistics introduced in Section 2 of
\cite{beskos2016ASMC}. We refer the reader to the latter for details on motivating examples such as filtering or sequential Bayesian parameter inference.

\subsection{Framework}
The notations that are adopted are essentially those in the pair of books
\cite{del2004feynman-kac,MR3060209}. 
Let $(E_n,\mathcal{B}(E_n))_{n\geq 0}$ be
a sequence of Polish
spaces. For each level $n\geq 1$, we
consider a family of potential functions $G_{n-1,z}:E_{n-1}\mapsto \mathbf{R}_+$
and Markov kernels $M_{n,z}:(E_{n-1},\mathcal{B}(E_n))\mapsto
[0,1]$  parametrized by $z\in \mathbf{R}^d$. Accordingly, we define
the family of nonnegative Feynman-Kac kernels $Q_{n,z}$ by
$$
Q_{n,z}(x,A):=G_{n-1,z}(x)M_{n,z}(x,A).
$$
We suppose that there exists a sequence of reference parameters $(z_n^*)_{n\geq
0}$ and, for each $n\geq 1$, we denote
$$
G_{n-1}:=G_{n-1,z_{n-1}^*},
\qquad 
M_n:=M_{n,z_{n-1}^*}
\qquad 
\text{and}
\qquad 
Q_n:=Q_{n,z_{n-1}^*}.
$$
Starting with a known probability measure $\gamma_0:=\eta_0\in \mathcal{P}(E_0)$, 
we define the unnormalized Feynman-Kac measures $\gamma_n$ by 
$$
\gamma_n:= \gamma_0 Q_1\cdots Q_n,
$$
along with the normalized measures 
$$
\eta_n:=
\frac{1}{\gamma_n(1)}\gamma_n.
$$
Assumption \ref{A1} below ensures that, for all $n\geq 0$, $G_n$ is strictly positive so that 
$$
\gamma_n(1) = \prod_{p=0}^{n-1}\eta_p(G_p) > 0.
$$
Another formulation of the connection between normalized and unnormalized measures is thus given by
\begin{equation}\label{cvnaeohncv}
\gamma_n(f_n)=\eta_n(f_n)\prod_{p=0}^{n-1}\eta_p(G_p).
\end{equation}
For $p<n$, we define the Feynman-Kac semigroup 
$$
Q_{p,n}:= Q_{p+1} \cdots Q_n,
$$
and $Q_{n,n}(x,A) := \delta_x(A)$. 
In this context, ASMC algorithms
aim at approximating the sequences of measures $(\gamma_n)_{n\geq 0}$ and
$(\eta_n)_{n\geq 0}$ by exploiting some summary statistics
$$
\zeta_n:E_n\mapsto \mathbf{R}^d
$$
such that, for all $n\geq 0$, we have
$$
\eta_n(\zeta_n) = z_n^*.
$$

\subsection{ASMC algorithm}\label{sec-old:IPS}
In practice, ASMC and SMC algorithms share the same selection/mutation mechanisms.
However, since in most situations of interest the parameters $(z_n^*)_{n\geq 0}$ are not analytically tractable, the potential functions $(G_n)_{n\geq 0}$ and transition kernels
$(M_n)_{n\geq 1}$ are estimated on the fly through the design
of an adaptive algorithm.    

Let $N\in \mathbf{N}^*$ be the number of particles (or samples).
The Interacting Particle System (IPS) associated to the ASMC algorithm is a Markov chain $(\mathbf{X_{n}})_{n\geq
0}$ taking values in $(E_n^{N},\mathcal{B}(E_n)^{\otimes N})_{n\geq 0}$
with genealogy $(\mathbf{A_{n}})_{n\geq 0}$ tracking the indice of the parent of each particle
at each level. Specifically, $A_{p-1}^i =j$ means that the parent of the particle
$X_{p}^i$ at layer $p$ is $X_{p-1}^j$ at layer $p-1$.
The estimation of the normalized measure $\eta_n$ is given by the empirical measure
$$
\eta_n^N
:=
\frac{1}{N}\sum_{i=1}^N
\delta_{X_n^i}.
$$
At each level $n\geq 0$, the estimated parameters are defined by
$Z_n^N:=\eta_n^N(\zeta_n)$.
In order to lighten the notation, we denote
$$
G_{n-1,N} := G_{n-1,Z_{n-1}^N},
\qquad
M_{n,N} := M_{n,Z_{n-1}^N},
\qquad
\text{and}
\qquad
Q_{n,N} := Q_{n,Z_{n-1}^N}.
$$
Then, considering (\ref{cvnaeohncv}), the unnormalized Feynman-Kac measures are estimated by
$$
\gamma_n^N(f_n)
:=\eta_n^N(f_n)
\prod_{p=0}^{n-1} \eta_p^N(G_{p,N}).
$$
In the following sections, we use the convention
$$
\eta_{-1}^N
=
\gamma_{-1}^N
:=\eta_0.
$$
Let us give the formal definition of the IPS associated with the ASMC
algorithm:
\begin{enumerate}[(i)]
  \item
Initial distribution: 

At step $0$, let 
$\mathbf{X_0} \sim \eta_0^{\otimes N}$.

\item
Transition kernels: 

For all $p\geq 0$, set $Z_p^N =
\eta_p^N(\zeta_p)$. The transition $X_{p}^i\rightsquigarrow X_{p+1}^i$ is decomposed into two steps:
\begin{itemize}
  \item Selection: given $\mathbf{X_{p}}=\bm{x_{p}}$, we make an
    independent multinomial selection of the parent of each particle by
    \begin{equation}\label{eq:selection}
    S_{p,N}(\bm{x_{p}},da_{p}^i)
    =
    \sum_{k=1}^{N}
    \frac{G_{p,N}(x_{p}^k)}{\sum_{j=1}^N
    G_{p,N}(x_{p}^j)}
    \delta_k(da_{p}^i).
    \end{equation}
    Thus, the genealogy of level $p$ to level $p+1$ is tracked by
    $$
    \mathbf{A_{p}}\sim 
    \bigotimes_{i=1}^N
    S_{p,N}(\mathbf{X_{p}},\cdot)
    $$
  \item Mutation: given the parent indices $\mathbf{A_{p}}=\bm{a_{p}}$, each
    particle at level $p$ evolves independently according to the transition
    kernel $M_{p+1,N}$, meaning that for $i\in[N]$,
    $$
    X_{p+1}^i \sim M_{p+1,N}(X_{p}^{a_{p}^i},\cdot). 
    $$
    Said differently, given $\mathbf{X_{p}}$ and $\mathbf{A_{p}}$, we have
 $$
    \mathbf{X_{p+1}}\sim 
    \bigotimes_{i=1}^N
    M_{p+1,N}(X_{p}^{A_{p}^i},\cdot). 
    $$   
\end{itemize}
%

\end{enumerate}

\subsection{Assumptions}

Our assumptions are introduced in
a similar way as in \cite{beskos2016ASMC}, but just slightly weaker. The reason why we can relax their assumptions is because we are only interested in the specific situation where the asymptotic variance of the ASMC estimator  is identical to the ``limiting'' SMC algorithm which uses ideal potential functions and proposal kernels, namely $G_p=G_{p,z_{p-1}^*}$ and $M_p=M_{p,z_{p-1}^*}$. Considering stability properties, Section 2.7 in \cite{beskos2016ASMC} explains why this case is particularly interesting in practice.
In the following sections, we use $\mathcal{A}$ as a short-hand for \emph{Assumption}.
 
\begin{assumption}\label{A1}
  For each $n\geq 0$, we assume that $G_{n,z}$ is strictly positive and bounded uniformly
  over $z\in \mathbf{R}^d$, i.e.,
  $$
 \norm{G_{n,\cdot}}_{\infty}:= \sup_{(x,z)\in E_n\times\mathbf{R}^d} G_{n,z}(x)<+\infty.
  $$
\end{assumption}

Notice that, under $\mathcal{A}$\ref{A1}, Equation
(\ref{eq:selection}) above is always well-defined for the denominator is always
strictly positive. 
  In the case where $G_{p,z}$ is only assumed to be nonnegative, as in the AMS framework, one
  may consider the stopping time $\tau_N$ defined by 
  $$
  \tau_N := \inf\left\{p\in \mathbf{N} : \eta_p^N(G_{p,N}) =0\right\}.  
  $$ 
 We believe that similar techniques can be applied to obtain results of the
 same taste as in the present paper, but at the cost of considerable technical
 complications which are out of the scope of this article. Let us mention that the strict positivity and boundedness of the potential functions is also required in 
 \cite{beskos2016ASMC} (see page 1116 and Assumption 1 page 1118).
In our second assumption, ``$\langle \cdot,\cdot \rangle$'' stands for the Euclidean
scalar product in $\mathbf{R}^d$ and $|\cdot|$ for the associated  norm.

\begin{assumption}\label{A2}
  For any test function $f_{n+1}\in\mathcal{B}_b(E_{n+1})$,
  there exists a measurable function 
  $h_{n}:(E_{n}\times \mathbf{R}^d,\mathcal{B}(E_{n})\otimes\mathcal{B}(\mathbf{R}^d)) \to (\mathbf{R}^d,\mathcal{B}(\mathbf{R}^d))$ such that, for all $(x,z_{n})\in E_{n}\times \mathbf{R}^d$,
  $$
  Q_{n+1,z_{n}}(f_{n+1})(x) - Q_{n+1}(f_{n+1})(x) = \left\langle h_{n}(x,z_{n}),z_{n} -z_{n}^*\right\rangle.
  $$
 The function $h_{n}$ is assumed to satisfy the following properties:
  \begin{itemize}
    \item
      The Euclidean norm $\abs{h_{n}}$ is bounded over $E_{n}\times \mathbf{R}^d$ by
      $\norm{h_{n}}_{\infty}$.
    \item
      The application $z\mapsto h_{n}(x,z)$ is continuous at $z_{n}^*$
      uniformly over $x\in E_{n}$. More precisely, for any $\epsilon>0$,
      there exists $\delta>0$, such that $\abs{z_{n}-z_{n}^*}<\delta$
      implies
      $$
      \sup_{x\in E_{n}}\abs{h_{n}(x,z_{n}) -h_{n}(x,z_{n}^*)} 
      < \epsilon.
      $$
    \item
      $h_{n}$ satisfies the equality
      $
      \eta_{n}\left(h_{n}(\cdot,z_{n}^*)\right) =0.
      $
  \end{itemize}
Moreover, the summary statistics $\zeta_n = (\zeta_n^1,\dots,\zeta_n^d)$
satisfies $z_n^*=\eta_n(\zeta_n)$ and is such that, for all $k\in[d]$,
$\zeta_n^k$ belongs to $\mathcal{B}_b(E_n)$. 
\end{assumption}

$\mathcal{A}$\ref{A2}
guarantees some regularity properties of the transition kernels $Q_{n,z}$
with respect to the parameter $z$ and
is just a slight generalization of the framework studied in Section 2 of
\cite{beskos2016ASMC}. Indeed, our function $h_{n}$ coincides with the function $\omega$ defined in (2.17) of \cite{beskos2016ASMC}, that is
$$h_{n}(x,z_{n})=\int_0^1\left.\partial_zQ_{n+1,z}(f_{n+1})(x)\right|_{z=z_{n}^*+\lambda(z_{n} -z_{n}^*)}d\lambda.$$
As such, the first two conditions on $h_{n}$ are satisfied as soon as Assumption 2 in \cite{beskos2016ASMC} is verified. In this respect, our third condition on $h_{n}$ corresponds to their condition (2.19) in Theorem 2.3, which is precisely the ``limiting'' case mentioned above. Finally, the hypothesis that the summary statistics are bounded is also required in their Assumption 1, while the relation $z_n^*=\eta_n(\zeta_n)$ corresponds in their notation to $\bar\xi_n=\eta_{n-1}(\xi_n)$.

We also want to mention that the second point is
equivalent to
\begin{multline*}
\forall \epsilon>0,\;
\exists g_n\in \mathcal{B}_b(E_{n}),\;
\exists\delta>0,\;s.t.\\
\abs{z_{n} -z_{n}^*} < \delta
\implies
\forall x\in E_{n},\ \abs{h_{n}(x,z_{n}) -h_{n}(x,z_{n}^*)} 
< g_n(x)\epsilon\leq\|g_n\|_\infty\epsilon.
\end{multline*}
We expect that, in this alternative formulation, the functions $g_n$ and $h_{n}$ can be relaxed to some unbounded functions,
belonging for example to $\mathbb{L}^{\mathrm{2}}(\eta_{n})$, along with stronger conditions on the
test function $f_{n+1}$. 
We believe that this is one of the
main differences between the ASMC framework studied in \cite{beskos2016ASMC} and
the AMS framework studied in \cite{cerou2016fluctuation}.

In general, it is not easy to verify the existence of such $h_{n}$.
However, we have, at least, a direction to explore in the case where
$Q_{n,z}(f)$ is not globally differentiable with respect to $z$. We also remark that we do not study
the consistency of $\gamma_n^N(f)$ and $\eta_n^N(f)$ with weaker
assumptions, as we are only interested in the 
CLT type result of Theorem \ref{thm-old:clt} below and, more specifically, in the estimation of the asymptotic variance.
Nevertheless, let us briefly mention that to establish the consistency of $\gamma_n^N$ and $\eta_n^N$, one just needs 
  $$
  \gamma_{n-1}^N Q_{n,N} (f_n)
  -
  \gamma_{n-1}^N Q_{n} (f_n)
  =o_{\mathbf{p}}(1)
  $$
   for any test function $f_n\in \mathcal{B}_b(E_n)$. This does not
  require such a strong assumption as $\mathcal{A}$\ref{A2}. However, for
  CLT type results with the ``stable'' asymptotic variance, it is necessary that 
  $$
  \gamma_{n-1}^N Q_{n,N} (f_n)
  -
  \gamma_{n-1}^N Q_{n} (f_n)
  =o_{\mathbf{p}}\left(\frac{1}{\sqrt{N}}\right).
  $$
  A stronger regularity assumption like $\mathcal{A}$\ref{A2} over the  parametrization is therefore
  required.


\subsection{Central limit theorems}

As explained before, the present article only deals with the case where the asymptotic variance
is identical to the ``limiting'' one, which is only a special case of the Central Limit
Theorem 2.2 given in
\cite{beskos2016ASMC} under slightly weaker assumptions. 
This is why, in Section \ref{sec:pf-clt}, we propose a different strategy for
the proof.

\begin{theorem}\label{thm-old:clt}
  Assume $\mathcal{A}$\ref{A1}-$\mathcal{A}$\ref{A2}.
  For any test function $f\in
  \mathcal{B}_n(E_n)$, we have

      $$
      \sqrt{N}
      \left(
      \gamma_n^N(f) - \gamma_n(f) 
      \right)
      \xrightarrow[N\rightarrow \infty]{\mathrm{d}}
      \mathcal{N}\left(
        0,\sigma_{\gamma_n}^2(f)
      \right),
      $$
and
      $$
      \sqrt{N}
      \left(
      \eta_n^N(f) - \eta_n(f) 
      \right)
      \xrightarrow[N\rightarrow \infty]{\mathrm{d}}
      \mathcal{N}\left(
        0,\sigma_{\eta_n}^2(f-\eta_n(f))
        \right),
      $$
  where
  $$
  \sigma_{\gamma_n}^2(f)
  :=
  \sum_{p=0}^n
  \left(
    \gamma_p(1)\gamma_p(Q_{p,n}(f)^2)
    -
    \gamma_n(f)^2
  \right)
  \qquad
  \text{and}
  \qquad
  \sigma_{\eta_n}^2(f) := \sigma_{\gamma_n}^2(f)\slash
  \gamma_n(1)^2.
  $$
\end{theorem}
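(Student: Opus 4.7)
The plan is to prove the CLT for $\gamma_n^N$ by induction on $n$, and deduce the one for $\eta_n^N$ via Slutsky's lemma. The base case $n=0$ is the classical i.i.d.\ CLT applied to $\eta_0^N=\gamma_0^N$. For the inductive step, let $\mathcal{F}_{n-1}:=\sigma(\mathbf{X_0},\dots,\mathbf{X_{n-1}})$ and use the decomposition
$$
\sqrt{N}\bigl(\gamma_n^N(f)-\gamma_n(f)\bigr)=T_1^N+T_2^N+T_3^N,
$$
where
\begin{align*}
T_1^N&:=\sqrt{N}\bigl(\gamma_n^N(f)-\gamma_{n-1}^N Q_{n,N}(f)\bigr),\\
T_2^N&:=\sqrt{N}\bigl(\gamma_{n-1}^N Q_{n,N}(f)-\gamma_{n-1}^N Q_n(f)\bigr),\\
T_3^N&:=\sqrt{N}\bigl(\gamma_{n-1}^N(Q_n f)-\gamma_{n-1}(Q_n f)\bigr).
\end{align*}
The term $T_3^N$ is exactly the object to which the induction hypothesis applies with bounded test function $Q_n f\in\mathcal{B}_b(E_{n-1})$, and, using $Q_{p,n-1}Q_n=Q_{p,n}$ together with $\gamma_{n-1}(Q_n f)=\gamma_n(f)$, its limit variance $\sigma_{\gamma_{n-1}}^2(Q_n f)$ collects precisely all summands of $\sigma_{\gamma_n}^2(f)$ for $p\leq n-1$.

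Since $\mathbb{E}[\gamma_n^N(f)\mid\mathcal{F}_{n-1}]=\gamma_{n-1}^N Q_{n,N}(f)$, the term $T_1^N$ is a normalized sum of conditionally centered i.i.d.\ variables: given $\mathcal{F}_{n-1}$, the particles $(X_n^i)_{i\in[N]}$ are i.i.d.\ draws from the Boltzmann--Gibbs mixture $\Phi_{n,N}(\eta_{n-1}^N):=\eta_{n-1}^N Q_{n,N}/\eta_{n-1}^N(G_{n-1,N})$. A conditional Lindeberg--Feller CLT applies (the Lindeberg condition is trivial since $f$ is bounded), and the conditional variance
$$
\gamma_{n-1}^N(G_{n-1,N})^2\bigl(\Phi_{n,N}(\eta_{n-1}^N)(f^2)-\Phi_{n,N}(\eta_{n-1}^N)(f)^2\bigr)
$$
converges in probability to $\gamma_n(1)\gamma_n(f^2)-\gamma_n(f)^2$, which is precisely the missing $p=n$ summand of $\sigma_{\gamma_n}^2(f)$. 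This requires the preliminary consistency $\Phi_{n,N}(\eta_{n-1}^N)\to\eta_n$, itself obtained by using $\mathcal{A}$\ref{A2} to replace $Q_{n,N}$ by $Q_n$ asymptotically (the same argument as for $T_2^N$ below). Because the limit variance is deterministic, this conditional CLT upgrades to stable convergence, yielding joint convergence of $(T_1^N,T_3^N)$ to independent Gaussians whose variances add to $\sigma_{\gamma_n}^2(f)$.

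The main obstacle is the adaptation correction $T_2^N$. Using $\mathcal{A}$\ref{A2},
$$
T_2^N=\sqrt{N}\,\gamma_{n-1}^N(1)\,\bigl\langle\eta_{n-1}^N\bigl(h_{n-1}(\cdot,Z_{n-1}^N)\bigr),\,Z_{n-1}^N-z_{n-1}^*\bigr\rangle.
$$
The factor $Z_{n-1}^N-z_{n-1}^*=\eta_{n-1}^N(\zeta_{n-1})-\eta_{n-1}(\zeta_{n-1})$ is $O_{\mathbf{p}}(N^{-1/2})$ by the induction hypothesis applied componentwise to the bounded summary statistics $\zeta_{n-1}^k$. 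For the other factor, I would split
$$
\eta_{n-1}^N\bigl(h_{n-1}(\cdot,Z_{n-1}^N)\bigr)=\eta_{n-1}^N\bigl(h_{n-1}(\cdot,z_{n-1}^*)\bigr)+\eta_{n-1}^N\bigl(h_{n-1}(\cdot,Z_{n-1}^N)-h_{n-1}(\cdot,z_{n-1}^*)\bigr),
$$
the first summand vanishing in probability by the LLN and the centering condition $\eta_{n-1}(h_{n-1}(\cdot,z_{n-1}^*))=0$, and the second vanishing by the uniform continuity of $h_{n-1}(x,\cdot)$ at $z_{n-1}^*$ combined with $Z_{n-1}^N\to z_{n-1}^*$ in probability. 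Hence $T_2^N=\sqrt{N}\cdot O_{\mathbf{p}}(1)\cdot o_{\mathbf{p}}(1)\cdot O_{\mathbf{p}}(N^{-1/2})=o_{\mathbf{p}}(1)$. The centering bullet in $\mathcal{A}$\ref{A2} is essential: without it, $T_2^N$ would contribute a non-degenerate $O_{\mathbf{p}}(1)$ term, breaking the ``limiting'' variance identity.

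Combining $T_1^N+T_3^N\xrightarrow[N\to\infty]{\mathrm{d}}\mathcal{N}(0,\sigma_{\gamma_n}^2(f))$ with $T_2^N\to 0$ in probability closes the induction for $\gamma_n^N$. The normalized CLT follows from the identity
$$
\sqrt{N}\bigl(\eta_n^N(f)-\eta_n(f)\bigr)=\frac{1}{\gamma_n^N(1)}\sqrt{N}\Bigl(\gamma_n^N(f-\eta_n(f))-\gamma_n(f-\eta_n(f))\Bigr),
$$
applying the unnormalized CLT to the centered test function $f-\eta_n(f)$, and using $\gamma_n^N(1)\to\gamma_n(1)$ in probability together with Slutsky's lemma; dividing by $\gamma_n(1)^2$ produces the announced variance $\sigma_{\eta_n}^2(f-\eta_n(f))$.
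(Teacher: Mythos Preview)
Your argument is correct but follows a different route from the paper's. The paper does not recurse on a three-term split; instead it uses the full telescoping
\[
\gamma_n^N(f)-\gamma_n(f)=\sum_{p=0}^n\bigl(\gamma_p^N(f_p)-\gamma_{p-1}^N Q_p(f_p)\bigr),\qquad f_p:=Q_{p,n}(f),
\]
further splits each summand into a martingale increment $U_k^N$ and an adaptation correction $D_p^N$, and applies McLeish's martingale-difference-array CLT to the whole family $(U_k^N)_{1\le k\le (n+1)N}$ at once. The corrections $D_p^N$ are handled for every $p$ by exactly the same $\mathcal{A}$\ref{A2} argument you give for $T_2^N$. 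Your recursion instead peels off one layer and packages the lower levels into $T_3^N$, which is cleaner in that only a single adaptation term $T_2^N$ appears explicitly at each step; the price is that you must justify the joint convergence of $(T_1^N,T_3^N)$ to independent Gaussians. Your appeal to ``stable convergence'' is the right intuition but slightly informal here because the conditioning $\sigma$-field $\mathcal{F}_{n-1}$ depends on $N$: the rigorous version is to show $\mathbf{E}[e^{itT_1^N}\mid\mathcal{F}_{n-1}]\to e^{-t^2 v_n/2}$ in probability (with $v_n$ the $p=n$ summand) and combine this with the $\mathcal{F}_{n-1}$-measurability of $T_3^N$ via dominated convergence on characteristic functions. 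The paper's martingale-array approach sidesteps this joint-limit issue entirely, which is what it buys.
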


One can notice that the CLT for $\eta_n^N$ is just a consequence of the CLT for $\gamma_n^N$, Slutsky's Lemma, and the decomposition
  $$
   \sqrt{N}\left(\eta_n^N(f) - \eta_n(f)\right)
  =
  \frac{1}{\gamma_n^N(1)}
  \sqrt{N}
  \left(\gamma_n^N(
    f-\eta_n(f))-\gamma_n(
    f-\eta_n(f))
  \right).
  $$

The main goal of this paper is to estimate the asymptotic variances $\sigma_{\gamma_n}^2(f)$
and $\sigma_{\eta_n}^2(f-\eta_n(f))$ by a single simulation of the particle
system, exactly as is done by Lee and Whiteley in \cite{lee2018var} in a  nonadaptive context. 

\section{Variance estimations}

In this section, we recall the coalescent tree-based
expansion of the variance firstly introduced in \cite{cerou2011var} from which we deduce a new variance estimator.
We also recall the variance estimator proposed by Lee and
Whiteley in \cite{lee2018var} and explain the connection between both estimators.

\subsection{Coalescent tree-based variance expansion}\label{ajsncoac}

We call $b:=(b_0,\dots,b_n)\in \{0,1\}^{n+1}$ a coalescence indicator where $b_p=1$ indicates that there is a coalescence at level $p$.

\begin{definition}\label{aecac}
  We associate with any coalescence
  indicator $b\in\{0,1\}^{n+1}$ the nonnegative measures
  $\Gamma_n^b$ and $\bar{\Gamma}_n^b \in\mathcal{M}_{+}(E_n^2)$ defined for any $F \in \mathcal{B}_b(E_n^2)$ by 
  $$
  \Gamma_n^b(F) := 
  \eta_0^{\otimes 2}C_{b_0}
  Q_1^{\otimes 2}C_{b_1}\cdots
  Q_n^{\otimes 2}C_{b_n}(F),
  $$
  and
  $$
  \bar{\Gamma}_n^b(F) :=
  \frac{1}{\gamma_n(1)^2} 
  \Gamma_n^b(F).
  $$
  When there is only one coalescence at, say, level $p$, we write
  $\Gamma_n^{(p)}(F)$ and $ \bar{\Gamma}_n^{(p)}(F)$ instead of $\Gamma_n^b(F)$ and $ \bar{\Gamma}_n^b(F)$ (see Figure \ref{sjcnlonc}).
  When there is no coalescence at all, that is $b=(0,\dots,0)$, we have   
  $$
  \Gamma_n^{(\varnothing)} (F) =
  \gamma_n^{\otimes 2} (F)\qquad \text{and}\qquad
  \bar{\Gamma}_n^{(\varnothing)} (F) = \eta_n^{\otimes
  2} (F).
  $$
\end{definition}

\begin{figure}[H]
\centering
\begin{tikzpicture}[scale = 0.5, every node/.style={transform shape}]
      \path [use as bounding box] (-7.5*1.3,-1.4*1.3) rectangle
        (7.5*1.3,1.4*1.3);

    \tikzset{norm/.style={circle, black, draw=black,densely dotted,
    fill=black, minimum size=1mm}}

    \tikzset{noire/.style={circle, black, draw=black,
    fill=black, minimum size=5mm}}

    \node [style=noire] (x0)    at (-7*1.3, -1*1.3)   {};
    \node               (xdots1)at (-5*1.3, -1*1.3)   {$\dots$};
    \node [style=noire] (xp-1)  at (-3*1.3, -1*1.3)   {};
    \node [style=noire] (xp)    at (-1*1.3, -1*1.3)   {};
    \node [style=noire] (xp+1)  at (1*1.3,  -1*1.3)    {};
    \node [style=noire] (xp+2)  at (3*1.3,  -1*1.3)    {};
    \node  (xdots2)             at (5*1.3,  -1*1.3)    {$\dots$};
    \node [style=noire] (xn)    at (7*1.3,  -1*1.3)     {};

    \node [style=noire] (y0)    at (-7*1.3, 1*1.3)    {};
    \node               (ydots1)at (-5*1.3, 1*1.3)    {$\dots$};
    \node [style=noire] (yp-1)  at (-3*1.3, 1*1.3)    {};
    \node [style=noire] (yp)    at (-1*1.3, 1*1.3)    {};
    \node [style=noire] (yp+1)  at (1*1.3,  1*1.3)     {};
    \node [style=noire] (yp+2)  at (3*1.3,  1*1.3)     {};
    \node               (ydots2)at (5*1.3,  1*1.3)     {$\dots$};
    \node [style=noire] (yn)    at (7*1.3,  1*1.3)     {};

    \draw [arrows=-latex] (x0)        to (xdots1);
    \draw [arrows=-latex] (xdots1)    to (xp-1);
    \draw [arrows=-latex] (xp-1)      to (xp);
    \draw [arrows=-latex] (xp)        to (xp+1);
    \draw [arrows=-latex] (xp+1)      to (xp+2);
    \draw [arrows=-latex] (xp+2)      to (xdots2);
    \draw [arrows=-latex] (xdots2)    to (xn);

    \draw [arrows=-latex] (y0)        to (ydots1);
    \draw [arrows=-latex] (ydots1)    to (yp-1);
    \draw [arrows=-latex] (yp-1)      to (yp);
    \draw [arrows=-latex] (xp)        to (yp+1);
    \draw [arrows=-latex] (yp+1)      to (yp+2);
    \draw [arrows=-latex] (yp+2)      to (ydots2);
    \draw [arrows=-latex] (ydots2)    to (yn);

    \node at (-6*1.3,-1) {$Q_{1}$};
    \node at (-4*1.3,-1) {$Q_{p-1}$};
    \node at (-2*1.3,-1) {$Q_{p}$};
    \node at (-0*1.3,-1) {$Q_{p+1}$};
    \node at (2*1.3,-1) {$Q_{p+2}$};
    \node at (4*1.3,-1) {$Q_{p+3}$};
    \node at (6*1.3,-1) {$Q_{n}$};
    
    \node at (-6*1.3, 1.6) {$Q_{1}$};
    \node at (-4*1.3, 1.6) {$Q_{p-1}$};
    \node at (-2*1.3, 1.6) {$Q_{p}$};
    \node at (-0*1.3-0.5, 0.2) {$Q_{p+1}$};
    \node at (2*1.3,  1.6) {$Q_{p+2}$};
    \node at (4*1.3,  1.6) {$Q_{p+3}$};
    \node at (6*1.3,  1.6) {$Q_{n}$};
    \node at (-7.5*1.3, -1*1.3)   {$\eta_0$};
    \node at (-7.5*1.3, 1*1.3)   {$\eta_0$};
\end{tikzpicture}
\caption{A representation of the coalescent tree-based measure
$\Gamma_n^{(p)}$.}
\label{sjcnlonc}
\end{figure}
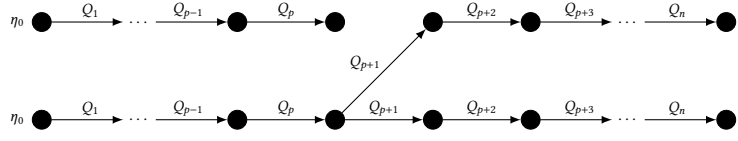

\medskip

\noindent
It is easy to verify from the definition that 
$$
\Gamma_n^{(p)} (f^{\otimes 2}) = \gamma_p(1)\ \gamma_p\left( Q_{p,n}(f)^2\right).
$$
As noticed in  \cite{cerou2011var}, the latter yields alternative representations for the asymptotic variances of Theorem \ref{thm-old:clt}, namely
\begin{equation}\label{eq:var-coal-gamma}
\sigma_{\gamma_n}^2(f) = \sum_{p=0}^{n} \left(
\Gamma_n^{(p)} (f^{\otimes 2})
- \Gamma_n^{(\varnothing)}(f^{\otimes 2})\right),
\end{equation}
and
\begin{equation}\label{eq:var-coal-eta}
\sigma_{\eta_n}^2(f) = \sum_{p=0}^{n} \left(
  \bar{\Gamma}_n^{(p)} (f^{\otimes 2})
- \bar{\Gamma}_n^{(\varnothing)}(f^{\otimes 2})\right).
\end{equation}
As a consequence, if for any coalescence indicator $b:=(b_0,\dots,b_n)\in \{0,1\}^{n+1}$, we can construct a consistent estimator $\bar{\Gamma}_{n,N}^b$ of $\bar{\Gamma}_{n}^b$, then we automatically deduce consistent estimators for the asymptotic variances of Theorem \ref{thm-old:clt}. This is the idea behind our next definition. 

In this definition, $\tilde{a}_p^{[2]}=(\tilde{a}_p^1,\tilde{a}_p^2)$ and $\ell_p^{[2]}=(\ell_p^1,\ell_p^2)$ denote two couples of indices between 1 and $N$, while an $(n+1)-$sequence of couples of indices such that $\ell_p^1\neq\ell_p^2$ for all $0\leq p\leq n$ is written
$$\ell_{0:n}^{[2]}=(\ell_0^{[2]},\cdots,\ell_n^{[2]})\in \left((N)^2\right)^{\times(n+1)}.$$ 
Additionally, we use the notation $X_n^{\ell_n^{[2]}}$ as a short-hand for $(X_n^{\ell_n^{1}},X_n^{\ell_n^{2}})$.

\begin{definition}
  \label{def:coal-estimator}
  For any test function $F\in\mathcal{B}_b(E_n^2)$
  and any coalescence indicator
  $b$, we define the
  estimator $\bar{\Gamma}_{n,N}^b$ 
  of the measure $\bar{\Gamma}_n^b$ by
  \begin{equation*}
  \bar{\Gamma}_{n,N}^b(F):=
  \frac{N^{n-1}}{(N-1)^{n+1}}
  \sum_{\ell_{0:n}^{[2]}\in \left((N)^2\right)^{\times(n+1)}}
 \left\{ \prod_{p=0}^{n-1}
  \lambda_p^b(A_p^{\ell_{p+1}^{[2]}},\ell_p^{[2]})
  \right\}
  C_{b_n}(F)(X_n^{\ell_n^{[2]}}),
  \end{equation*}
  where
  $\lambda_p^b(\tilde{a}_p^{[2]},\ell_p^{[2]})\in\{0,1\}$
  is an indicator function defined by
  $$
  \lambda_p^b(\tilde{a}_p^{[2]},\ell_p^{[2]})
  :=
  \mathbf{1}_{\{b_p = 0\}} \mathbf{1}_{\{\tilde{a}_p^1  =
  \ell_p^1\neq\tilde{a}_p^2=\ell_p^2\}}
  +
  \mathbf{1}_{\{b_p = 1\}} \mathbf{1}_{\{\tilde{a}_p^1 =
  \ell_p^1=\tilde{a}_p^2
  \neq\ell_p^2\}}.
  $$
 The estimator of $\Gamma_n^b$ is defined by
  \begin{equation*}
  \Gamma_{n,N}^b(F) =
  \gamma_n^N(1)^2\ \bar{\Gamma}_{n,N}^b(F). 
  \end{equation*}
\end{definition}

Since $\ell_p^1\neq\ell_p^2$, we also have 
$$ \lambda_p^b(\tilde{a}_p^{[2]},\ell_p^{[2]})= \mathbf{1}_{\{b_p = 0\}} \mathbf{1}_{\{\tilde{a}_p^1  =
  \ell_p^1,\tilde{a}_p^2=\ell_p^2\}}
  +
  \mathbf{1}_{\{b_p = 1\}} \mathbf{1}_{\{\tilde{a}_p^1 =
  \ell_p^1=\tilde{a}_p^2\}}.$$
Notice that, for $n=0$, we get
\begin{equation}\label{sjcn}
  \bar{\Gamma}_{0,N}^b(F):=
  \frac{1}{N(N-1)}
  \sum_{\ell_{0}^{[2]}\in (N)^2}
  C_{b_0}(F)(X_0^{\ell_0^{[2]}})
  =
  \frac{1}{N(N-1)}
  \sum_{i\neq j}
  C_{b_0}(F)(X_0^i,X_0^j).
 \end{equation}
  We also adopt the convention
  $$
  \bar{\Gamma}_{-1,N}^b(F)=
  \Gamma_{-1,N}^b(F):=
  \eta_0^{\otimes 2}C_{b_0}(F).
  $$

\paragraph{A toy example}
As the definition of the estimator $\Gamma_{n,N}^b$
is not completely straightforward, we illustrate the idea on a simple example.
For this, we consider the IPS of Figure \ref{aoecochz}.
\begin{figure}[htb]
  \centering
      \begin{tikzpicture}[scale = 0.7, every
        node/.style={transform shape}]
      \path [use as bounding box] (-6,-3) rectangle (8,3);
    \tikzset{norm/.style={circle, black, draw=black,
    fill=white, minimum size=7mm,inner sep=0pt}}
    \node [style=norm] (01) at (-5, 2) {$X_0^1$};
    \node [style=norm] (02) at (-5, 1) {$X_0^2$};
    \node [style=norm] (03) at (-5, -0) {$X_0^3$};
    \node [style=norm] (04) at (-5, -1) {$X_0^4$};
    \node [style=norm] (05) at (-5, -2) {$X_0^5$};
    \node [] (step0) at (-5, -3) {step 0};

    \node [style=norm] (11) at (-3, 2) {$X_1^1$};
    \node [style=norm] (12) at (-3, 1) {$X_1^2$};
    \node [style=norm] (13) at (-3, -0) {$X_1^3$};
    \node [style=norm] (14) at (-3, -1) {$X_1^4$};
    \node [style=norm] (15) at (-3, -2) {$X_1^5$};
    \node [] (step1) at (-3, -3) {step 1};

    \draw [arrows=-latex] (01) to (11);
    \draw [arrows=-latex] (01) to (12);
    \draw [arrows=-latex] (03) to (13);
    \draw [arrows=-latex] (05) to (14);
    \draw [arrows=-latex] (03) to (15);

    \node [style=norm] (21) at (-1, 2) {$X_2^1$};
    \node [style=norm] (22) at (-1, 1) {$X_2^2$};
    \node [style=norm] (23) at (-1, -0) {$X_2^3$};
    \node [style=norm] (24) at (-1, -1) {$X_2^4$};
    \node [style=norm] (25) at (-1, -2) {$X_2^5$};
    \node [] (step2) at (-1, -3) {step 2};

    \draw [arrows=-latex] (14) to (21);
    \draw [arrows=-latex] (14) to (22);
    \draw [arrows=-latex] (12) to (23);
    \draw [arrows=-latex] (14) to (24);
    \draw [arrows=-latex] (13) to (25);

    \node [style=norm] (31) at (1, 2) {$X_3^1$};
    \node [style=norm] (32) at (1, 1) {$X_3^2$};
    \node [style=norm] (33) at (1, -0) {$X_3^3$};
    \node [style=norm] (34) at (1, -1) {$X_3^4$};
    \node [style=norm] (35) at (1, -2) {$X_3^5$};
    \node [] (step3) at (1, -3) {step 3};

    \draw [arrows=-latex] (25) to (31);
    \draw [arrows=-latex] (22) to (32);
    \draw [arrows=-latex] (21) to (33);
    \draw [arrows=-latex] (25) to (34);
    \draw [arrows=-latex] (22) to (35);

    \node [style=norm] (41) at (3, 2) {$X_4^1$};
    \node [style=norm] (42) at (3, 1) {$X_4^2$};
    \node [style=norm] (43) at (3, -0) {$X_4^3$};
    \node [style=norm] (44) at (3, -1) {$X_4^4$};
    \node [style=norm] (45) at (3, -2) {$X_4^5$};
    \node [] (step4) at (3, -3) {step 4};

    \draw [arrows=-latex] (31) to (41);
    \draw [arrows=-latex] (32) to (42);
    \draw [arrows=-latex] (34) to (43);
    \draw [arrows=-latex] (32) to (44);
    \draw [arrows=-latex] (32) to (45);

    \node [style=norm] (51) at (5, 2) {$X_5^1$};
    \node [style=norm] (52) at (5, 1) {$X_5^2$};
    \node [style=norm] (53) at (5, -0) {$X_5^3$};
    \node [style=norm] (54) at (5, -1) {$X_5^4$};
    \node [style=norm] (55) at (5, -2) {$X_5^5$};
    \node [] (step5) at (5, -3) {step 5};

    \draw [arrows=-latex] (42) to (51);
    \draw [arrows=-latex] (43) to (52);
    \draw [arrows=-latex] (45) to (53);
    \draw [arrows=-latex] (43) to (54);
    \draw [arrows=-latex] (43) to (55);

    \node [style=norm] (61) at (7, 2) {$X_6^1$};
    \node [style=norm] (62) at (7, 1) {$X_6^2$};
    \node [style=norm] (63) at (7, -0) {$X_6^3$};
    \node [style=norm] (64) at (7, -1) {$X_6^4$};
    \node [style=norm] (65) at (7, -2) {$X_6^5$};
    \node [] (step6) at (7, -3) {step 6};

    \draw [arrows=-latex] (52) to (61);
    \draw [arrows=-latex] (51) to (62);
    \draw [arrows=-latex] (52) to (63);
    \draw [arrows=-latex] (53) to (64);
    \draw [arrows=-latex] (55) to (65);
\end{tikzpicture}
\caption{An IPS with $n+1=7$ levels and $N=5$ particles at each level.}
\label{aoecochz}
\end{figure}
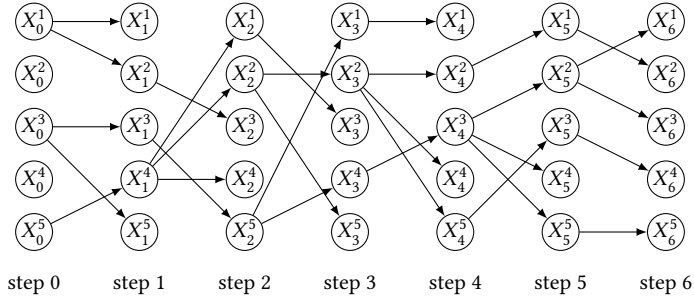

Suppose we want to estimate $\bar{\Gamma}_6^{(3)}(F)$ by $\bar{\Gamma}_{6,5}^{(3)}(F)$. 
We denote $b^* =  (0,0,0,1,0,0,0)$ the corresponding coalescence indicator. 
In the associated IPS, we have to find the choices of
$\ell_{0:6}^{[2]}$ such that
\begin{equation}\label{eq:b-star}
\prod_{p=0}^{5}
\lambda_p^{b^*}(A_p^{\ell_{p+1}^{[2]}},\ell_p^{[2]})
=1.
\end{equation}

\begin{figure}[htb]
  \centering
      \begin{tikzpicture}[scale = 0.7, every
        node/.style={transform shape}]
      \path [use as bounding box] (-6,-3) rectangle (8,3);

    \tikzset{norm/.style={circle, black, draw=black,densely dotted,
    fill=white, minimum size=1mm}}

    \tikzset{noire/.style={circle, black, draw=black,
    fill=black, minimum size=1mm}}

    \node [style=norm] (01) at (-5, 2) {};
    \node [style=norm] (02) at (-5, 1) {};
    \node [style=noire] (03) at (-5, -0) {};
    \node [style=norm] (04) at (-5, -1) {};
    \node [style=noire] (05) at (-5, -2) {};
    \node [] (step0) at (-5, -3) {step 0};

    \node [style=norm] (11) at (-3, 2) {};
    \node [style=norm] (12) at (-3, 1) {};
    \node [style=noire] (13) at (-3, -0) {};
    \node [style=noire] (14) at (-3, -1) {};
    \node [style=norm] (15) at (-3, -2) {};
    \node [] (step1) at (-3, -3) {step 1};

    \draw [arrows=-latex,style=densely dotted] (01) to (11);
    \draw [arrows=-latex,style=densely dotted] (01) to (12);
    \draw [arrows=-latex] (03) to (13);
    \draw [arrows=-latex] (05) to (14);
    \draw [arrows=-latex,style=densely dotted] (03) to (15);

    \node [style=norm] (21) at (-1, 2) {};
    \node [style=noire] (22) at (-1, 1) {};
    \node [style=norm] (23) at (-1, -0) {};
    \node [style=norm] (24) at (-1, -1) {};
    \node [style=noire] (25) at (-1, -2) {};
    \node [] (step2) at (-1, -3) {step 2};

    \draw [arrows=-latex,style=densely dotted] (14) to (21);
    \draw [arrows=-latex] (14) to (22);
    \draw [arrows=-latex,style=densely dotted] (12) to (23);
    \draw [arrows=-latex,style=densely dotted] (14) to (24);
    \draw [arrows=-latex] (13) to (25);

    \node [style=norm] (31) at (1, 2) {};
    \node [style=noire] (32) at (1, 1) {};
    \node [style=norm] (33) at (1, -0) {};
    \node [style=noire] (34) at (1, -1) {};
    \node [style=norm] (35) at (1, -2) {};
    \node [] (step3) at (1, -3) {step 3};

    \draw [arrows=-latex,style=densely dotted] (25) to (31);
    \draw [arrows=-latex] (22) to (32);
    \draw [arrows=-latex,style=densely dotted] (21) to (33);
    \draw [arrows=-latex] (25) to (34);
    \draw [arrows=-latex,style=densely dotted] (22) to (35);

    \node [style=norm] (41) at (3, 2) {};
    \node [style=noire] (42) at (3, 1) {};
    \node [style=norm] (43) at (3, -0) {};
    \node [style=norm] (44) at (3, -1) {};
    \node [style=noire] (45) at (3, -2) {};
    \node [] (step4) at (3, -3) {step 4};

    \draw [arrows=-latex,style=densely dotted] (31) to (41);
    \draw [arrows=-latex] (32) to (42);
    \draw [arrows=-latex,style=densely dotted] (34) to (43);
    \draw [arrows=-latex,style=densely dotted] (32) to (44);
    \draw [arrows=-latex] (32) to (45);

    \node [style=noire] (51) at (5, 2) {};
    \node [style=norm] (52) at (5, 1) {};
    \node [style=noire] (53) at (5, -0) {};
    \node [style=norm] (54) at (5, -1) {};
    \node [style=norm] (55) at (5, -2) {};
    \node [] (step5) at (5, -3) {step 5};

    \draw [arrows=-latex] (42) to (51);
    \draw [arrows=-latex,style=densely dotted] (43) to (52);
    \draw [arrows=-latex] (45) to (53);
    \draw [arrows=-latex,style=densely dotted] (43) to (54);
    \draw [arrows=-latex,style=densely dotted] (43) to (55);

    \node [style=norm] (61) at (7, 2) {};
    \node [style=noire] (62) at (7, 1) {};
    \node [style=norm] (63) at (7, -0) {};
    \node [style=noire] (64) at (7, -1) {};
    \node [style=norm] (65) at (7, -2) {};
    \node [] (step6) at (7, -3) {step 6};

    \draw [arrows=-latex,style=densely dotted] (52) to (61);
    \draw [arrows=-latex] (51) to (62);
    \draw [arrows=-latex,style=densely dotted] (52) to (63);
    \draw [arrows=-latex] (53) to (64);
    \draw [arrows=-latex,style=densely dotted] (55) to (65);

\end{tikzpicture}
\caption{The first family of $\ell_{0:6}^{[2]}$ such that \eqref{eq:b-star} is
verified.}
\label{kjzcbxkzjxb}
\end{figure}
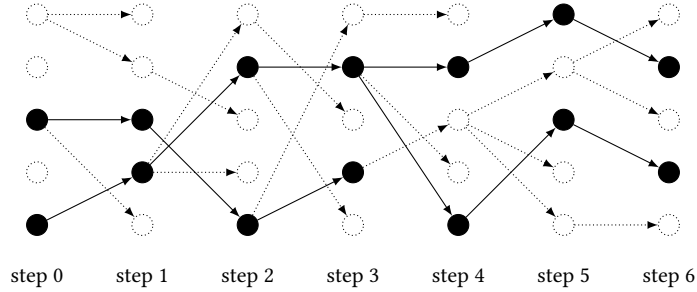
It turns out that there are 4 possible choices, taking into account that $F(x,x')$ is not necessarily symmetric in its variables. Namely, the first couple of ancestral lines is (see Figure \ref{kjzcbxkzjxb}):
\begin{itemize}
  \item
    $
    \ell_{0:6}^{[2]} = \left(
    (5,3),(4,3),(2,5),(2,4),(2,5),(1,3),(2,4)\right);
    $ 
\item
    $
    \ell_{0:6}^{[2]} = \left(
    (5,3),(4,3),(2,5),(2,4),(5,2),(3,1),(4,2)\right).
    $
\end{itemize}

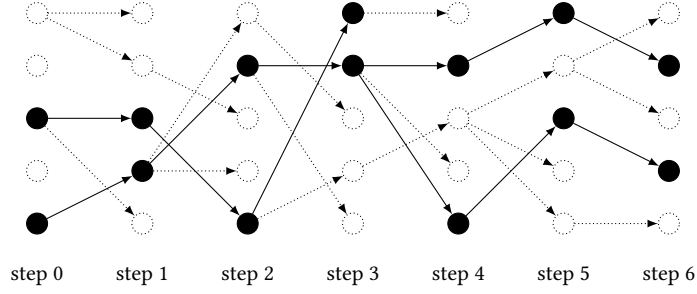
\begin{figure}[htb]
  \centering
      \begin{tikzpicture}[scale = 0.7, every
        node/.style={transform shape}]
      \path [use as bounding box] (-6,-3) rectangle (8,3);

    \tikzset{norm/.style={circle, black, draw=black,densely dotted,
    fill=white, minimum size=1mm}}

    \tikzset{noire/.style={circle, black, draw=black,
    fill=black, minimum size=1mm}}

    \node [style=norm] (01) at (-5, 2) {};
    \node [style=norm] (02) at (-5, 1) {};
    \node [style=noire] (03) at (-5, -0) {};
    \node [style=norm] (04) at (-5, -1) {};
    \node [style=noire] (05) at (-5, -2) {};
    \node [] (step0) at (-5, -3) {step 0};

    \node [style=norm] (11) at (-3, 2) {};
    \node [style=norm] (12) at (-3, 1) {};
    \node [style=noire] (13) at (-3, -0) {};
    \node [style=noire] (14) at (-3, -1) {};
    \node [style=norm] (15) at (-3, -2) {};
    \node [] (step1) at (-3, -3) {step 1};

    \draw [arrows=-latex,style=densely dotted] (01) to (11);
    \draw [arrows=-latex,style=densely dotted] (01) to (12);
    \draw [arrows=-latex] (03) to (13);
    \draw [arrows=-latex] (05) to (14);
    \draw [arrows=-latex,style=densely dotted] (03) to (15);

    \node [style=norm] (21) at (-1, 2) {};
    \node [style=noire] (22) at (-1, 1) {};
    \node [style=norm] (23) at (-1, -0) {};
    \node [style=norm] (24) at (-1, -1) {};
    \node [style=noire] (25) at (-1, -2) {};
    \node [] (step2) at (-1, -3) {step 2};

    \draw [arrows=-latex,style=densely dotted] (14) to (21);
    \draw [arrows=-latex] (14) to (22);
    \draw [arrows=-latex,style=densely dotted] (12) to (23);
    \draw [arrows=-latex,style=densely dotted] (14) to (24);
    \draw [arrows=-latex] (13) to (25);

    \node [style=noire] (31) at (1, 2) {};
    \node [style=noire] (32) at (1, 1) {};
    \node [style=norm] (33) at (1, -0) {};
    \node [style=norm] (34) at (1, -1) {};
    \node [style=norm] (35) at (1, -2) {};
    \node [] (step3) at (1, -3) {step 3};

    \draw [arrows=-latex] (25) to (31);
    \draw [arrows=-latex] (22) to (32);
    \draw [arrows=-latex,style=densely dotted] (21) to (33);
    \draw [arrows=-latex,style=densely dotted] (25) to (34);
    \draw [arrows=-latex,style=densely dotted] (22) to (35);

    \node [style=norm] (41) at (3, 2) {};
    \node [style=noire] (42) at (3, 1) {};
    \node [style=norm] (43) at (3, -0) {};
    \node [style=norm] (44) at (3, -1) {};
    \node [style=noire] (45) at (3, -2) {};
    \node [] (step4) at (3, -3) {step 4};

    \draw [arrows=-latex,style=densely dotted] (31) to (41);
    \draw [arrows=-latex] (32) to (42);
    \draw [arrows=-latex,style=densely dotted] (34) to (43);
    \draw [arrows=-latex,style=densely dotted] (32) to (44);
    \draw [arrows=-latex] (32) to (45);

    \node [style=noire] (51) at (5, 2) {};
    \node [style=norm] (52) at (5, 1) {};
    \node [style=noire] (53) at (5, -0) {};
    \node [style=norm] (54) at (5, -1) {};
    \node [style=norm] (55) at (5, -2) {};
    \node [] (step5) at (5, -3) {step 5};

    \draw [arrows=-latex] (42) to (51);
    \draw [arrows=-latex,style=densely dotted] (43) to (52);
    \draw [arrows=-latex] (45) to (53);
    \draw [arrows=-latex,style=densely dotted] (43) to (54);
    \draw [arrows=-latex,style=densely dotted] (43) to (55);

    \node [style=norm] (61) at (7, 2) {};
    \node [style=noire] (62) at (7, 1) {};
    \node [style=norm] (63) at (7, -0) {};
    \node [style=noire] (64) at (7, -1) {};
    \node [style=norm] (65) at (7, -2) {};
    \node [] (step6) at (7, -3) {step 6};

    \draw [arrows=-latex,style=densely dotted] (52) to (61);
    \draw [arrows=-latex] (51) to (62);
    \draw [arrows=-latex,style=densely dotted] (52) to (63);
    \draw [arrows=-latex] (53) to (64);
    \draw [arrows=-latex,style=densely dotted] (55) to (65);

\end{tikzpicture}
\caption{The second family of $\ell_{0:6}^{[2]}$ such that \eqref{eq:b-star} is
verified.}
\label{kjzcbxkzjxbg}
\end{figure}
The second couple of ancestral lines is (see Figure \ref{kjzcbxkzjxbg}):
\begin{itemize}
  \item
   $
   \ell_{0:6}^{[2]} = \left(
   (5,3),(4,3),(2,5),(2,1),(2,5),(1,3),(2,4)\right);
   $ 
  \item
   $
   \ell_{0:6}^{[2]} = \left(
   (5,3),(4,3),(2,5),(2,1),(5,2),(3,1),(4,2)\right).
   $
\end{itemize}
Hence, the number of choices of $\ell_{0:6}^{[2]}$ where $\ell_6^{[2]} = (2,4)$ is 2, and
the number of choices of $\ell_{0:6}^{[2]}$ where $\ell_6^{[2]} = (4,2)$ is also 2.
As a consequence, we have
$$
\bar{\Gamma}_{6,5}^{(3)}(F)
=
2\times\left\{\frac{5^5}{4^7}
\left(
  F(X_6^2,X_6^4)
  +
  F(X_6^4,X_6^2)
  \right)\right\}.
$$

Our next result ensures the convergence of our estimators.

\begin{theorem}[Convergence of $\Gamma_{n,N}^b$]\label{thm:coalescent-measure}
  Assume $\mathcal{A}$\ref{A1}-$\mathcal{A}$\ref{A2}. For any test functions 
$\phi,\psi\in \mathcal{B}_b(E_n)$ and for any coalescence indicator
$b\in\{0,1\}^{n+1}$, we have
$$
\Gamma_{n,N}^b(\phi\otimes\psi)
-
\Gamma_n^b(\phi\otimes\psi)
=\mathscr{O}_{\mathbf{p}}\left(\frac{1}{\sqrt{N}}\right).
$$
\end{theorem}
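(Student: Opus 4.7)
The plan is to prove the claim by induction on $n$, with the key tool being a recursive identity for the conditional expectation $\mathbb{E}[\Gamma_{n,N}^b(F)\mid\mathcal{F}_{n-1}^N]$, where $\mathcal{F}_{n-1}^N := \sigma(\mathbf{X_0},\mathbf{A_0},\dots,\mathbf{A_{n-2}},\mathbf{X_{n-1}})$. The base case $n=0$ is immediate: since $\gamma_0^N(1)=1$, $\Gamma_{0,N}^b(\phi\otimes\psi) = m^{\odot 2}(\mathbf{X_0})(C_{b_0}(\phi\otimes\psi))$ is a degree-two $U$-statistic built from the i.i.d.\ sample $\mathbf{X_0}\sim\eta_0^{\otimes N}$ with bounded kernel, so the classical $U$-statistic CLT gives $\Gamma_{0,N}^b(\phi\otimes\psi) - \eta_0^{\otimes 2}C_{b_0}(\phi\otimes\psi) = \mathscr{O}_{\mathbf{p}}(1/\sqrt{N})$.

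For the inductive step, I first establish the identity
\[
\mathbb{E}[\Gamma_{n,N}^b(F)\mid\mathcal{F}_{n-1}^N] = \Gamma_{n-1,N}^{b'}(Q_{n,N}^{\otimes 2}C_{b_n}F), \qquad b' := (b_0,\dots,b_{n-1}),
\]
by splitting the outer sum over $\ell_{0:n-1}^{[2]}$ from the inner sum over $\ell_n^{[2]}$ and using the conditional independence of the pairs $(A_{n-1}^i, X_n^i)_{i=1}^N$ given $\mathcal{F}_{n-1}^N$. When $b_{n-1}=0$ the indicator $\lambda_{n-1}^b$ forces $A_{n-1}^{\ell_n^{[2]}} = \ell_{n-1}^{[2]}$, yielding after integration the factor $Q_{n,N}^{\otimes 2}C_{b_n}F(X_{n-1}^{\ell_{n-1}^{[2]}})/(N\eta_{n-1}^N(G_{n-1,N}))^2$; when $b_{n-1}=1$ it forces coalescence at level $n-1$ and yields $Q_{n,N}^{\otimes 2}C_{b_n}F(X_{n-1}^{\ell_{n-1}^1},X_{n-1}^{\ell_{n-1}^1}) = C_{b_{n-1}}Q_{n,N}^{\otimes 2}C_{b_n}F(X_{n-1}^{\ell_{n-1}^{[2]}})$ divided by the same normalizer. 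Summing the $N(N-1)$ ordered pairs $\ell_n^{[2]}$ and exploiting the telescoping $\gamma_n^N(1)^2 = \gamma_{n-1}^N(1)^2\eta_{n-1}^N(G_{n-1,N})^2$ packages both cases into the unified identity above.

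Decomposing the error as $\Gamma_{n,N}^b(F)-\Gamma_n^b(F) = T_1 + T_2$ with $T_1 := \Gamma_{n,N}^b(F) - \mathbb{E}[\Gamma_{n,N}^b(F)\mid\mathcal{F}_{n-1}^N]$ and $T_2 := \mathbb{E}[\Gamma_{n,N}^b(F)\mid\mathcal{F}_{n-1}^N] - \Gamma_n^b(F)$, and using the semigroup identity $\Gamma_n^b(F) = \Gamma_{n-1}^{b'}(Q_n^{\otimes 2}C_{b_n}F)$, I split
\[
T_2 = [\Gamma_{n-1,N}^{b'}-\Gamma_{n-1}^{b'}](Q_n^{\otimes 2}C_{b_n}F) + \Gamma_{n-1,N}^{b'}([Q_{n,N}^{\otimes 2}-Q_n^{\otimes 2}]C_{b_n}F).
\]
The first bracket is $\mathscr{O}_{\mathbf{p}}(1/\sqrt{N})$ by the induction hypothesis, since $Q_n^{\otimes 2}C_{b_n}F$ is bounded by $\|G_{n-1}\|_\infty^2\|F\|_\infty$ under $\mathcal{A}$\ref{A1}. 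For the second, I write $Q_{n,N}^{\otimes 2}-Q_n^{\otimes 2} = (Q_{n,N}-Q_n)\otimes Q_{n,N} + Q_n\otimes(Q_{n,N}-Q_n)$ and invoke $\mathcal{A}$\ref{A2}: each term $(Q_{n,N}-Q_n)(\phi)$ is uniformly bounded by $\|h_{n-1}\|_\infty|Z_{n-1}^N - z_{n-1}^*|$, which is $\mathscr{O}_{\mathbf{p}}(1/\sqrt{N})$ by Theorem \ref{thm-old:clt} applied to the bounded summary statistics $\zeta_{n-1}^k$; combined with $\Gamma_{n-1,N}^{b'}(1) = \mathscr{O}_{\mathbf{p}}(1)$ (induction hypothesis with $F\equiv 1$), this gives $T_2 = \mathscr{O}_{\mathbf{p}}(1/\sqrt{N})$. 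For $T_1$, since $\gamma_n^N(1)^2$ is $\mathcal{F}_{n-1}^N$-measurable, $T_1$ is a centered functional of the conditionally i.i.d.\ pairs $(A_{n-1}^i,X_n^i)_{i=1}^N$ summed over ordered pairs $\ell_n^{[2]}$ with bounded, $\mathcal{F}_{n-1}^N$-measurable weights; a standard conditional $U$-statistic variance computation gives $\mathrm{Var}(T_1\mid\mathcal{F}_{n-1}^N) = \mathscr{O}_{\mathbf{p}}(1/N)$, hence $T_1 = \mathscr{O}_{\mathbf{p}}(1/\sqrt{N})$.

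The main obstacle is the recursive identity: one must handle the two cases $b_{n-1}\in\{0,1\}$ uniformly and match the combinatorial prefactor $N^{n-1}/(N-1)^{n+1}$ with the ordered-pair count $N(N-1)$, the self-normalized selection probabilities $G_{n-1,N}(X_{n-1}^j)/(N\eta_{n-1}^N(G_{n-1,N}))$, and the mutation kernel $M_{n,N}^{\otimes 2}$, so that the coalescent structure at level $n-1$ gets absorbed into $C_{b_{n-1}}$ in front of $Q_{n,N}^{\otimes 2}C_{b_n}F$. Once this identity is secured, the induction runs smoothly with $\mathcal{A}$\ref{A2} supplying exactly the parametric regularity needed to absorb the adaptive perturbation of the Feynman-Kac kernels at the $1/\sqrt{N}$ rate.
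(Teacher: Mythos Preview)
Your overall architecture coincides with the paper's: induction on $n$, the recursive conditional-expectation identity $\mathbf{E}[\Gamma_{n,N}^b(F)\mid\mathscr{G}_{n-1}^N]=\Gamma_{n-1,N}^{b'}(Q_{n,N}^{\otimes 2}C_{b_n}F)$, and the three-piece splitting (your $T_1$ is the paper's $R_1$, your $T_2$ is $R_2+R_3$). Your direct computation of the recursive identity, by conditioning on $\mathscr{G}_{n-1}^N$ and integrating out $(A_{n-1}^{\ell_n^{[2]}},X_n^{\ell_n^{[2]}})$ case-by-case in $b_{n-1}$, is correct and in fact lighter than the paper's route, which packages the same calculation through the many-body Feynman-Kac duality formula (Lemma~A.1 and Proposition~A.1).

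The gap is in the variance control of $T_1$. You describe it as ``a standard conditional $U$-statistic variance computation'' with ``bounded, $\mathcal{F}_{n-1}^N$-measurable weights''. But once you expand $\Lambda_n^{\ell_n^{[2]}}=\sum_{\ell_{n-1}^{[2]}}\Lambda_{n-1}^{\ell_{n-1}^{[2]}}\lambda_{n-1}^b(A_{n-1}^{\ell_n^{[2]}},\ell_{n-1}^{[2]})$, the resulting $U$-kernel on $((A_{n-1}^i,X_n^i))_i$ carries the factor $\sum_{\ell_{n-1}^{[2]}}\Lambda_{n-1}^{\ell_{n-1}^{[2]}}\lambda_{n-1}^b(\cdot,\ell_{n-1}^{[2]})$, and neither the individual $\Lambda_{n-1}^{\ell_{n-1}^{[2]}}$ nor their sum is bounded uniformly in $N$ (each coalescence in $b_{0:n-2}$ contributes a factor $N-1$). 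Consequently the standard $U$-statistic bound $\mathrm{Var}\leq C\|K\|_\infty^2/N$ is useless here. The paper handles this by two dedicated results that do not follow from your induction hypothesis: Proposition~4.1 shows $\sup_N\mathbf{E}[\Gamma_{n,N}^b(1)^2]<\infty$ by a separate induction on $n$, and the second half of Lemma~4.1 runs yet another induction to show that the ``overlap'' part of the squared sum,
\[
\gamma_n^N(1)^4\Bigl(\tfrac{N^{n-1}}{(N-1)^{n+1}}\Bigr)^{2}\sum_{(\ell_n^{[2]},\ell_n'^{[2]})\in((N)^2)^{\times 2}\setminus(N)^4}\Lambda_n^{\ell_n^{[2]}}\Lambda_n^{\ell_n'^{[2]}},
\]
has expectation $\mathscr{O}(1/N)$. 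This combinatorial step---tracking how overlaps at level $n$ force overlaps at level $n-1$ through $\lambda_{n-1}^b$---is the real content of the $T_1$ bound and cannot be dismissed as standard. Your induction hypothesis only gives $\Gamma_{n-1,N}^{b'}(1)=\mathscr{O}_{\mathbf{p}}(1)$ (tightness), not the $L^2$ control needed to push the variance through. To close the argument you need to either reproduce Proposition~4.1 and the overlap induction, or build an $L^2$ statement into the induction hypothesis from the start.
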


The proof is given in Section \ref{oachohc}.

\subsection{Term by term estimator}

Considering (\ref{eq:var-coal-gamma}), (\ref{eq:var-coal-eta}), and Theorem \ref{thm:coalescent-measure}, we are now in a position to provide term by term variance estimators for $\sigma^2_{\gamma_{n}}(f)$ and $\sigma^2_{\eta_{n}}(f)$.

\begin{definition}[Estimators of the asymptotic variances]\label{aocoicj}
Given a test
function $f\in \mathcal{B}_b(E_n)$, we let
$$
\sigma^2_{\gamma_{n,N}}(f) 
:=
\sum_{p=0}^n\left(
  \Gamma_{n,N}^{(p)}(f^{\otimes 2}) - 
    \Gamma_{n,N}^{(\varnothing)}(f^{\otimes 2}) 
\right),
$$
and
$$
\sigma^2_{\eta_{n,N}}(f) 
:=
\sum_{p=0}^n\left(
  \bar{\Gamma}_{n,N}^{(p)}(f^{\otimes 2}) - 
  \bar{\Gamma}_{n,N}^{(\varnothing)}(f^{\otimes 2}) 
\right).
$$
\end{definition}

Theorem \ref{thm:coalescent-measure} ensures the
consistency of both 
$\Gamma_{n,N}^{(p)}(f^{\otimes 2})$  
and 
$\Gamma_{n,N}^{(\varnothing)}(f^{\otimes 2})$. Returning to (\ref{eq:var-coal-gamma}),
this amounts to saying that
\begin{equation*}
  \sigma_{\gamma_n,N}^2(f)
  =
 \sum_{p=0}^n\left(
     \Gamma_{n,N}^{(p)}(f^{\otimes 2}) - 
     \Gamma_{n,N}^{(\varnothing)}(f^{\otimes 2}) 
\right)\xrightarrow[N\rightarrow\infty]{\mathbf{P}}
 \sum_{p=0}^n\left(
     \Gamma_{n}^{(p)}(f^{\otimes 2}) - 
     \Gamma_{n}^{(\varnothing)}(f^{\otimes 2})\right)= \sigma_{\gamma_n}^2(f).
\end{equation*}
Similarly, for the consistency of $\sigma^2_{\eta_{n,N}}(f-\eta_n^N(f))$, since by (\ref{eq:var-coal-eta}) we know that
$$\sigma_{\eta_n}^2(f) = \sum_{p=0}^{n} \left(
  \bar{\Gamma}_n^{(p)} (f^{\otimes 2})
- \bar{\Gamma}_n^{(\varnothing)}(f^{\otimes 2})\right),$$
it suffices to verify that, for any coalescent indicator $b$,
\begin{equation}
\label{pf:cvg-Gamma-bar}
\bar{\Gamma}_{n,N}^b
\left(
  \left[f-\eta_n^N(f)\right]^{\otimes 2}
\right) 
\xrightarrow[N\rightarrow\infty]{\mathbf{P}}
\bar{\Gamma}_{n}^b 
\left(
  \left[f-\eta_n^N(f)\right]^{\otimes 2}
\right). 
\end{equation}
Clearly, the linearity of $\bar{\Gamma}_{n,N}^b$ yields
\begin{multline*}
\bar{\Gamma}_{n,N}^b
\left(
 \left[f-\eta_n^N(f)\right]^{\otimes 2}
\right) 
\\
=
\bar{\Gamma}_{n,N}^b
(f^{\otimes 2})
-
\eta_n^N(f)
\left(
\bar{\Gamma}_{n,N}^b
\left(
  1\otimes f
\right) 
+
\bar{\Gamma}_{n,N}^b
\left(
  f\otimes 1
\right) 
\right)
+
\eta_n^N(f)^2
\bar{\Gamma}_{n,N}^b
\left(
  1^{\otimes 2}
\right).
\end{multline*}
Mutatis mutandis, the same relation holds for $\bar{\Gamma}_{n}^b
\left(
 \left[f-\eta_n^N(f)\right]^{\otimes 2}
\right) $. Since a by-product of Theorem \ref{thm-old:clt} is that
$$
\eta_n^N(f)
-
\eta_n(f)
=\mathscr{O}_{\mathbf{p}}\left(\frac{1}{\sqrt{N}}\right)
,
$$
the verification of \eqref{pf:cvg-Gamma-bar} is just a consequence of Theorem \ref{thm:coalescent-measure} and Slutsky's Lemma. Hence, we have obtained the following result.

\begin{theorem}[Consistency of $\sigma^2_{\gamma_{n,N}}$ and $\sigma^2_{\eta_{n,N}}$]\label{thm-old:tbt}
  Assume $\mathcal{A}$\ref{A1}-$\mathcal{A}$\ref{A2}. For $f \in
  \mathcal{B}_b(E_n)$, we have
    $$
    \sigma^2_{\gamma_{n,N}}(f) 
    -
    \sigma_{\gamma_n}^2(f)
    =\mathscr{O}_{\mathbf{p}}\left(\frac{1}{\sqrt{N}}\right)
    ,
    $$
    as well as
    $$
    \sigma^2_{\eta_{n,N}}(f-\eta_n^N(f)) 
    -
      \sigma_{\eta_n}^2(f-\eta_n(f))
    =\mathscr{O}_{\mathbf{p}}\left(\frac{1}{\sqrt{N}}\right).
    $$
\end{theorem}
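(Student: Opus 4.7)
The plan is to reduce both claims to direct applications of Theorem \ref{thm:coalescent-measure} and Theorem \ref{thm-old:clt}, paying attention only to how the $\mathscr{O}_{\mathbf{p}}(1/\sqrt{N})$ rate propagates through finite linear combinations and products.

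For the first claim, I would simply plug the coalescent expansion \eqref{eq:var-coal-gamma} into Definition \ref{aocoicj}. Writing
\[
\sigma^2_{\gamma_{n,N}}(f) - \sigma_{\gamma_n}^2(f)
= \sum_{p=0}^n \Bigl(\Gamma_{n,N}^{(p)}(f^{\otimes 2}) - \Gamma_{n}^{(p)}(f^{\otimes 2})\Bigr)
- (n+1)\Bigl(\Gamma_{n,N}^{(\varnothing)}(f^{\otimes 2}) - \Gamma_{n}^{(\varnothing)}(f^{\otimes 2})\Bigr),
\]
each of the $2(n+1)$ summands is $\mathscr{O}_{\mathbf{p}}(1/\sqrt{N})$ by Theorem \ref{thm:coalescent-measure}, and a finite sum of $\mathscr{O}_{\mathbf{p}}(1/\sqrt{N})$ terms remains $\mathscr{O}_{\mathbf{p}}(1/\sqrt{N})$.

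For the second claim, my plan is to pass from $\Gamma$ to $\bar{\Gamma}$ and then expand. Since Theorem \ref{thm:coalescent-measure} gives $\Gamma_{n,N}^b - \Gamma_n^b = \mathscr{O}_{\mathbf{p}}(1/\sqrt{N})$ while Theorem \ref{thm-old:clt} gives $\gamma_n^N(1) - \gamma_n(1) = \mathscr{O}_{\mathbf{p}}(1/\sqrt{N})$, writing
\[
\bar{\Gamma}_{n,N}^b(F) - \bar{\Gamma}_n^b(F)
= \frac{\Gamma_{n,N}^b(F) - \Gamma_n^b(F)}{\gamma_n^N(1)^2}
+ \Gamma_n^b(F)\,\frac{\gamma_n(1)^2 - \gamma_n^N(1)^2}{\gamma_n^N(1)^2\,\gamma_n(1)^2}
\]
and using $\gamma_n^N(1) \xrightarrow{\mathbf{P}} \gamma_n(1) > 0$ shows that $\bar{\Gamma}_{n,N}^b - \bar{\Gamma}_n^b$ also converges at rate $1/\sqrt{N}$. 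Next, applying linearity to the quadratic already displayed in the text, I would write
\[
\bar{\Gamma}_{n,N}^b\bigl([f-\eta_n^N(f)]^{\otimes 2}\bigr)
- \bar{\Gamma}_n^b\bigl([f-\eta_n(f)]^{\otimes 2}\bigr)
\]
as a sum of differences of the form $\bar{\Gamma}_{n,N}^b(F) - \bar{\Gamma}_n^b(F)$ (for $F \in \{f^{\otimes 2},\,1\otimes f,\,f\otimes 1,\,1^{\otimes 2}\}$) and of differences of the form $\eta_n^N(f)^k - \eta_n(f)^k$ ($k=1,2$). Each piece is $\mathscr{O}_{\mathbf{p}}(1/\sqrt{N})$ multiplied by a bounded-in-probability factor, and Slutsky's Lemma then delivers the desired rate for each coalescence indicator $b$.

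Finally, summing over $b \in \{(0), (1), \dots, (n), (\varnothing)\}$ (a finite collection) closes the argument for $\sigma^2_{\eta_{n,N}}(f-\eta_n^N(f)) - \sigma_{\eta_n}^2(f-\eta_n(f))$. The only technical care needed is the uniform positivity of the normalizer $\gamma_n^N(1)$ in probability (which comes from consistency plus $\gamma_n(1)>0$ under $\mathcal{A}$\ref{A1}), so the ``hard'' step is really not hard at all: all the work has been done upstream in Theorem \ref{thm:coalescent-measure}, and what remains is bookkeeping in the spirit of Slutsky's Lemma to keep track of the $1/\sqrt{N}$ rate through finite sums, products, and the ratio defining $\bar{\Gamma}$.
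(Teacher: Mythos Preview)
Your proposal is correct and follows essentially the same route as the paper: apply Theorem~\ref{thm:coalescent-measure} term by term, expand the quadratic $[f-\eta_n^N(f)]^{\otimes 2}$ by bilinearity into the four tensor pieces $f^{\otimes 2}$, $1\otimes f$, $f\otimes 1$, $1^{\otimes 2}$, and combine with $\eta_n^N(f)-\eta_n(f)=\mathscr{O}_{\mathbf{p}}(1/\sqrt{N})$ from Theorem~\ref{thm-old:clt} via Slutsky. The only difference is cosmetic: the paper compares $\bar{\Gamma}_{n,N}^b$ and $\bar{\Gamma}_n^b$ both evaluated at the \emph{random} centering $[f-\eta_n^N(f)]^{\otimes 2}$ and leaves the passage from $\Gamma$ to $\bar{\Gamma}$ implicit, whereas you make the ratio decomposition for $\bar{\Gamma}_{n,N}^b-\bar{\Gamma}_n^b$ explicit and compare directly against the deterministic centering $[f-\eta_n(f)]^{\otimes 2}$; both are equally valid bookkeeping.
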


  Even if the term by term estimator is
  very natural in theory, the computational cost is
  quite heavy in practice since one has to trace the whole genealogy of a
  particle system and calculate all the corresponding terms one by one.
  Therefore, we do not provide an efficient algorithm to calculate this
  estimator. Instead, we show in the next section that this estimator can be
  connected to the one given by Lee \& Whiteley in a nonadaptive context (SMC), which is very
  simple and fast to calculate. Let us also mention that our term by term estimator
  is different from the one introduced in Section 4.1 of
  \cite{lee2018var}. The interested reader can find more details on this point in Appendix
  \ref{final-remarks}.

\subsection{Disjoint ancestral lines estimator}
Let us now recall the variance estimator proposed in \cite{lee2018var}, which can be seen as a disjoint ancestral lines estimator. Namely, given a test
function $f\in \mathcal{B}_b(E_n)$, consider
\begin{equation}\label{ljsncn}
V_n^N(f) := \eta_n^N(f)^2 - \frac{N^{n-1}}{(N-1)^{n+1}}
\sum_{E_n^i\neq E_n^j} f(X_n^i)
f(X_n^j),
\end{equation}
where $E_n^i$ is the ancestor index of $X_n^i$ at level 0. Returning to the toy example of Section \ref{ajsncoac}, the couples $(i,j)$ such that $i<j$ and $E_n^i\neq E_n^j$ are: $(1,2), (1,4), (2,3), (2,5), (3,4), (4,5)$.

In a nonadaptive context (SMC), this is the variance estimator introduced in \cite{lee2018var} when the number $N$ of
particles is the same at each step. The reader is referred to  
\cite{lee2018var} for an efficient algorithm to compute this estimator. 

According to our notation, since $E_n^i\neq E_n^j$ corresponds to the case $b=(0,\dots,0)=(\varnothing)$ of disjoint ancestral lines, we may also write
$$
V_n^N(f) =
\eta_n^N(f)^2 - \bar\Gamma_{n,N}^{(\varnothing)}(f^{\otimes 2}).
$$

The following proposition makes a connection between $V_n^N(f)$ and our estimators. Notice that this result does not depend on $\mathcal{A}$\ref{A2}, but is provided by the structure of the IPS and the
underlying multinomial selection scheme. The proof is housed in Section \ref{zicj}.

\begin{proposition}\label{lm:convergence-proba}
  Assume $\mathcal{A}$\ref{A1}.
  For any test function $f \in \mathcal{B}_b(E_n)$, 
  we have 
  $$
  N V_n^N(f)
  -
 \sigma^2_{\eta_{n,N}}(f) 
  =\mathscr{O}_{\mathbf{p}}\left(\frac{1}{N}\right),
  $$
  and
  $$
  N V_n^N(f-\eta_n^N(f))
  -
 \sigma^2_{\eta_{n,N}}(f-\eta_n^N(f))
  =
  \mathscr{O}_{\mathbf{p}}\left(\frac{1}{N}\right).
  $$
\end{proposition}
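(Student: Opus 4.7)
The strategy is to compute $NV_n^N(f) - \sigma^2_{\eta_{n,N}}(f)$ as an explicit polynomial in the particle data and then bound the result by a combination of deterministic arithmetic estimates and probabilistic control of the coalescent structure. Substituting the definitions of $V_n^N(f) = \eta_n^N(f)^2 - \bar\Gamma_{n,N}^{(\varnothing)}(f^{\otimes 2})$ and $\sigma_{\eta_{n,N}}^2$, the task reduces to showing
$$N\eta_n^N(f)^2 - (N-n-1)\,\bar\Gamma_{n,N}^{(\varnothing)}(f^{\otimes 2}) - \sum_{p=0}^n \bar\Gamma_{n,N}^{(p)}(f^{\otimes 2}) = \mathscr{O}_\mathbf{p}(1/N).$$

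The first step is to unwind Definition \ref{def:coal-estimator} for each single-coalescence indicator by expressing it in terms of pairs of particles classified by their ancestral behaviour. For $0 \leq p < n$, the constraints from $\lambda_q^{(p)}$ force every $\ell_q^{[2]}$ with $q \neq p$ to be determined by the ancestral lines of $\ell_n^{[2]}$, and require $(\ell_n^1, \ell_n^2)$ to satisfy $T(\ell_n^1, \ell_n^2) = p$, where $T(i,j)$ denotes the first backwards-coalescence level. The remaining free parameter is the ghost index $\ell_p^2$, which must differ from the common ancestor $A_p^{\ell_n^1}$ and have a different level-$0$ ancestor, yielding
$$\bar\Gamma_{n,N}^{(p)}(f^{\otimes 2}) = c_N \sum_{i\neq j,\,T(i,j)=p} f(X_n^i)\,f(X_n^j)\,D_p(A_p^i), \quad c_N := \frac{N^{n-1}}{(N-1)^{n+1}},$$
with $D_p(k) := N - C_p(k)$ and $C_p(k) := |\{m : E_p^m = E_p^k\}|$ the size of the level-$p$ cluster sharing the level-$0$ ancestor of $k$; analogous formulas hold for $p = n$ (via the $C_1$ projection) and for $b = (\varnothing)$. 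In parallel, the quantity $N\eta_n^N(f)^2 = \frac{1}{N}\sum_{i,j} f(X_n^i) f(X_n^j)$ is partitioned according to the same strata (diagonal, disjoint ancestry, or $T(i,j) = p$ for some $p$).

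Subtracting, the leading $1/N$ contributions cancel thanks to the asymptotics $c_N(N-1) = \frac{1}{N} + O(1/N^2)$, $c_N N = \frac{1}{N} + O(1/N^2)$, and $c_N(N-n-1) = \frac{1}{N} + O(1/N^3)$, and the residual splits into two kinds. The first collects elementary arithmetic corrections whose coefficients are $O(1/N^2)$ (respectively $O(1/N^3)$), multiplied by combinatorial sums bounded by $N^2\|f\|_\infty^2$, and is therefore $O(\|f\|_\infty^2/N)$ deterministically. The second collects cluster-correction terms of the form $c_N \sum_{p,i,j} f(X_n^i)\,f(X_n^j)\,C_p(A_p^i)$ arising from the decomposition $D_p(k) = N - C_p(k)$. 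These are controlled in probability by exploiting Assumption \ref{A1} and the multinomial structure of the selection step: strict positivity and uniform boundedness of the potentials, together with the convergence of $\eta_p^N(G_{p,N})$ to $\eta_p(G_p)>0$, force the maximum selection weight at level $p$ to be $\mathscr{O}_\mathbf{p}(1/N)$, hence the one-step backwards coalescence probability $\sum_k (w_k^{(p)})^2$ is also $\mathscr{O}_\mathbf{p}(1/N)$; iterating across the $n$ levels yields that the combinatorial sums appearing in the cluster correction are of probabilistic order $N$, giving the required $\mathscr{O}_\mathbf{p}(\|f\|_\infty^2/N)$ bound. The centred version follows by applying the same argument to $f - \eta_n^N(f)$, whose sup-norm is bounded by $2\|f\|_\infty$.

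The main obstacle is the probabilistic bookkeeping in the last step, namely controlling the cluster-correction terms using only Assumption \ref{A1} and the multinomial structure of the selection scheme (no regularity of the kernels $Q_{n,z}$ is involved), together with carefully tracking the contribution of the ``ghost'' indices $\ell_p^2$ in the coalescent expansion of $\bar\Gamma_{n,N}^{(p)}$ for $p < n$.
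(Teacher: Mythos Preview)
Your combinatorial unwinding of $\bar\Gamma_{n,N}^{(p)}$ in terms of the backward coalescence time $T(i,j)$ and the ghost index $\ell_p^2$ is correct, and the leading cancellations you describe do occur. The paper, however, takes a much shorter structural route: it first proves the exact identity (Proposition~\ref{prop:decomposition-fixed})
\[
(\eta_n^N)^{\otimes 2}(F)=\sum_{b\in\{0,1\}^{n+1}}\Bigl\{\prod_{p=0}^n\frac{(N-1)^{1-b_p}}{N}\Bigr\}\bar\Gamma_{n,N}^b(F),
\]
which holds for \emph{every} realisation of the IPS, and then simply observes that after subtracting $\sigma^2_{\eta_{n,N}}(f)$ from $NV_n^N(f)=N(\eta_n^N)^{\otimes 2}(f^{\otimes 2})-N\bar\Gamma_{n,N}^{(\varnothing)}(f^{\otimes 2})$ every surviving coefficient in front of a $\bar\Gamma_{n,N}^b$ is $O(1/N)$ by the elementary asymptotics of $(1-1/N)^k$. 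The only probabilistic input is the tightness $\bar\Gamma_{n,N}^b(f^{\otimes 2})=\mathscr{O}_{\mathbf p}(1)$ for each of the finitely many $b$, which the paper pulls from the $\mathbb L^2$ bound of Proposition~\ref{prop:Gamma1}. In particular the ghost-index counts $D_p(\cdot)$ and the stratification by $T(i,j)$ never appear: the identity absorbs all of that bookkeeping, as well as the contributions of indicators $b$ with $|b|\ge 2$, into one line.

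There is a genuine gap in your final probabilistic step. To control the cluster-correction terms you invoke ``the convergence of $\eta_p^N(G_{p,N})$ to $\eta_p(G_p)>0$'', but this is Theorem~\ref{thm:consistency-gamma}, which requires Assumption~$\mathcal A$\ref{A2}. The proposition is stated under~$\mathcal A$\ref{A1} alone, where the potentials are strictly positive but carry no uniform lower bound, so neither the maximal selection weight nor $\sum_k (w_k^{(p)})^2$ is a priori $\mathscr O_{\mathbf p}(1/N)$, and the sum of squared cluster sizes need not be $\mathscr O_{\mathbf p}(N)$. The paper avoids working with the bare $\bar\Gamma$ and the cluster sizes directly: the proof of Proposition~\ref{prop:Gamma1} bounds $\Gamma_{n,N}^b(1)$, exploiting the cancellation between the factor $m(\mathbf X_{p-1})(G_{p-1,N})^4$ coming from $\gamma_p^N(1)^4$ and the inverse powers of $m(\mathbf X_{p-1})(G_{p-1,N})$ in the selection probabilities, and this is precisely what is lost if one tries to bound the cluster sizes in isolation. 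If you are content to add~$\mathcal A$\ref{A2} (harmless for the downstream Theorem~\ref{thm:main}), your argument can be completed; under~$\mathcal A$\ref{A1} alone you would instead need a moment bound in the spirit of Proposition~\ref{prop:Gamma1} rather than a law-of-large-numbers argument on the weights.
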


By combining Theorem \ref{thm-old:tbt} and
Proposition \ref{lm:convergence-proba}, we finally obtain the main result of
the present article. 
\begin{theorem}\label{thm:main}
  Assume $\mathcal{A}$\ref{A1}-$\mathcal{A}$\ref{A2}. For any test function $f \in
  \mathcal{B}_b(E_n)$, we have
    $$
    N\gamma_n^N(1)^2V_n^N(f) 
    -
    \sigma_{\gamma_n}^2(f)
    =\mathscr{O}_{\mathbf{p}}\left(\frac{1}{\sqrt{N}}\right)
    ,
    $$
    and
    $$
    NV_n^N(f-\eta_n^N(f)) 
    -
    \sigma_{\eta_n}^2(f-\eta_n(f))
    =\mathscr{O}_{\mathbf{p}}\left(\frac{1}{\sqrt{N}}\right).
    $$
\end{theorem}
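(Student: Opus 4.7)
The plan is to combine Proposition \ref{lm:convergence-proba}, which compares $NV_n^N$ with the coalescent-tree-based estimator $\sigma^2_{\eta_{n,N}}$, with Theorem \ref{thm-old:tbt}, which gives the consistency rate of the latter towards the true asymptotic variance. Both ingredients are already available, so the whole argument boils down to stringing these two approximations together while taking care of the normalizing factor $\gamma_n^N(1)^2$.

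For the second assertion I would begin by applying Proposition \ref{lm:convergence-proba} to the centered test function $f - \eta_n^N(f)$, which yields
\[
NV_n^N(f-\eta_n^N(f)) - \sigma^2_{\eta_{n,N}}(f-\eta_n^N(f)) = \mathscr{O}_{\mathbf{p}}\!\left(\tfrac{1}{N}\right).
\]
Theorem \ref{thm-old:tbt} then provides
\[
\sigma^2_{\eta_{n,N}}(f-\eta_n^N(f)) - \sigma^2_{\eta_n}(f-\eta_n(f)) = \mathscr{O}_{\mathbf{p}}\!\left(\tfrac{1}{\sqrt N}\right),
\]
and adding these two bounds, with $1/N = o(1/\sqrt N)$, closes the $\eta$-version at rate $1/\sqrt N$.

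For the first assertion, the key observation is the scaling identity $\Gamma_{n,N}^b = \gamma_n^N(1)^2\,\bar{\Gamma}_{n,N}^b$ coming directly from Definition \ref{def:coal-estimator}, which implies
\[
\sigma^2_{\gamma_{n,N}}(f) = \gamma_n^N(1)^2\,\sigma^2_{\eta_{n,N}}(f).
\]
Applying Proposition \ref{lm:convergence-proba} to the uncentered $f$ and multiplying through by $\gamma_n^N(1)^2$ gives
\[
N\gamma_n^N(1)^2 V_n^N(f) = \sigma^2_{\gamma_{n,N}}(f) + \gamma_n^N(1)^2\,\mathscr{O}_{\mathbf{p}}\!\left(\tfrac{1}{N}\right).
\]
Since $\gamma_n^N(1) \to \gamma_n(1)$ in probability (a byproduct of Theorem \ref{thm-old:clt}), the factor $\gamma_n^N(1)^2$ is $\mathscr{O}_{\mathbf{p}}(1)$, so the remainder is still $\mathscr{O}_{\mathbf{p}}(1/N)$. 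Theorem \ref{thm-old:tbt} then replaces $\sigma^2_{\gamma_{n,N}}(f)$ by $\sigma^2_{\gamma_n}(f)$ at the price of an $\mathscr{O}_{\mathbf{p}}(1/\sqrt N)$ error, and the conclusion follows by Slutsky's lemma.

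There is essentially no hard step left — the main ingredients have already been proved earlier in the paper. The only point requiring a brief justification is the control of the random prefactor $\gamma_n^N(1)^2$ in the first bound, which is handled by the consistency $\gamma_n^N(1) \xrightarrow{\mathbf{P}} \gamma_n(1) > 0$ and the elementary fact that a sequence converging in probability to a constant is bounded in probability. All the conceptual work (the coalescent-tree expansion, the convergence of $\Gamma_{n,N}^b$, and the combinatorial identification of $V_n^N$ with the disjoint-ancestral-lines term) has been done upstream, so this final theorem is genuinely a corollary.
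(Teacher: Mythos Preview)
Your proposal is correct and follows exactly the approach the paper takes: the paper's entire proof is the single sentence ``By combining Theorem \ref{thm-old:tbt} and Proposition \ref{lm:convergence-proba}, we finally obtain the main result,'' and you have simply spelled out the two-line combination in each case, including the scaling identity $\sigma^2_{\gamma_{n,N}}(f)=\gamma_n^N(1)^2\sigma^2_{\eta_{n,N}}(f)$ and the tightness of $\gamma_n^N(1)^2$ needed for the $\gamma$-version.
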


Hence, the main message of the present work is that the computationally very simple estimator proposed by Lee and Whiteley in a  nonadaptive framework (SMC) is still consistent in an adaptive one (ASMC). However, since we could not adapt easily their proof to our adaptive context, we propose a new approach to show this consistency result. More details on the connection between both estimators are given in Appendix \ref{final-remarks}.

As emphasized before, among other ingredients, the tools we use connect the study of Particle Markov Chain Monte Carlo methods and the variance estimation problem in SMC methods. As such, more generally, they may give some new insights when dealing with complex genealogy-involved problems of Interacting Particle Systems. 

Before going into the details of the proofs, let us mention that a numerical experiment on a toy example is proposed in Appendix \ref{sec:num} to illustrate the consistency of the Lee and Whiteley variance estimators in the adaptive and nonadaptive cases. Finally, Appendix \ref{jscnnlncln} makes a connection between our term by term estimators and the truncated variance estimators recently proposed by Olsson and Douc in \cite{olsson2019} to address the issue of degeneracy in the ancestral lines.


\section{Proofs}
\subsection{Almost sure convergence}

In this section, we provide classical almost sure convergence results on SMC framework
under our specific  parameterization, namely with adaptive potential functions
and transition kernels. We focus on the properties that do not use the
additional information given by the genealogy of the associated IPS.
Therefore, in order to simplify the story, we give a ``rougher''
definition of the associated IPS without considering the genealogy.  

\begin{itemize}
  \item $\mathbf{X_{0}}\sim \eta_0^{\otimes N}$
  \item For $p\geq 1$, we let
    $$
    \mathbf{X_{p}}\sim \bigotimes_{i=1}^N K_{p,\eta_{p-1}^N}(X_{p-1}^i,\cdot)
    $$
    where, given $\mathbf{X_{p-1}}$, $K_{p,\eta_{p-1}^N}$ is the Markov kernel defined by
    $$
    \forall(x,A) \in E_{p-1}\times \mathcal{B}(E_p),
    \qquad
    K_{p,\eta_{p-1}^N}(x,A) := \frac{\eta_{p-1}^N
    Q_{p,N}(x,A)}{\eta_{p-1}^N\left(G_{p-1,N}\right)}.
    $$
\end{itemize}
It is easy to check that the distributions of the particles are identical to
the ones defined in Section \ref{sec-old:IPS}. Let us begin with the consistency of
the corresponding adaptive estimators. Recall that, by 
$\mathcal{A}$\ref{A2}, the summary statistics
$\zeta_n=(\zeta_n^1,\cdots,\zeta_n^d)$ satisfies $\eta_n(\zeta_n)=z_n^*$ and,
for all $k\in[d]$, $\zeta_n^k$ belongs to $\mathcal{B}_b(E_n)$.

\begin{theorem}\label{thm:consistency-gamma}
  Assume $\mathcal{A}$\ref{A1}-$\mathcal{A}$\ref{A2}.
  For any $f\in \mathcal{B}_b(E_n)$, we
  have
  $$
  \gamma_n^N(f) 
  \xrightarrow[N\rightarrow \infty]{a.s.}
  \gamma_n(f), 
  $$
  and
  $$
  \eta_n^N(f) 
  \xrightarrow[N\rightarrow \infty]{a.s.}
  \eta_n(f).
  $$
  In particular, we also have
  $$
  Z_n^N=\eta_n^N(\zeta_n) 
  \xrightarrow[N\rightarrow \infty]{a.s.}
  \eta_n(\zeta_n)=z_n^*.
  $$
\end{theorem}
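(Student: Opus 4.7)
The plan is to prove the three statements simultaneously by induction on $n$, with the induction hypothesis being that $\eta_n^N(f) \to \eta_n(f)$ almost surely for every $f \in \mathcal{B}_b(E_n)$ (which, together with the boundedness of the components of $\zeta_n$ from $\mathcal{A}$\ref{A2}, immediately yields $Z_n^N \to z_n^*$ a.s.). The claim for $\gamma_n^N(f)$ will then follow from the product decomposition
$$\gamma_n^N(f) = \eta_n^N(f)\prod_{p=0}^{n-1}\eta_p^N(G_{p,N}),$$
once each factor is controlled.

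For the base case $n=0$, $\eta_0^N$ is the empirical measure of $N$ i.i.d.\ samples from $\eta_0$, and the classical strong law of large numbers gives the conclusion for bounded $f$. For the inductive step, I would use the simplified description of the IPS provided just above the statement and write
$$\eta_{n+1}^N(f) = \eta_n^N K_{n+1,\eta_n^N}(f) + \frac{1}{N}\sum_{i=1}^N\left(f(X_{n+1}^i) - K_{n+1,\eta_n^N}(f)(X_n^i)\right).$$
Conditionally on $\mathbf{X_n}$, the summands in the second term are independent, centered, and bounded by $2\norm{f}_\infty$, so Hoeffding's inequality yields $\mathbb{P}(|\cdot|>\varepsilon \mid \mathbf{X_n}) \leq 2e^{-cN\varepsilon^2}$. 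Taking expectations and invoking Borel--Cantelli shows the fluctuation term vanishes almost surely.

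It remains to handle the bias term $\eta_n^N K_{n+1,\eta_n^N}(f) = \eta_n^N(Q_{n+1,N}(f))/\eta_n^N(G_{n,N})$, and this is the step that genuinely uses $\mathcal{A}$\ref{A2}. Applied to $f$ and to the constant $1$ (which recovers $G_{n,z}-G_n$), the assumption provides bounded functions $h_n(\cdot,z)$ and $h_n^{(1)}(\cdot,z)$ such that
$$\eta_n^N(Q_{n+1,N}(f)) - \eta_n^N(Q_{n+1}(f)) = \langle \eta_n^N(h_n(\cdot,Z_n^N)), Z_n^N - z_n^*\rangle,$$
and similarly for $G_{n,N}-G_n$. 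Since by the induction hypothesis $Z_n^N \to z_n^*$ a.s., and since $h_n$ and $h_n^{(1)}$ are uniformly bounded, both differences vanish almost surely. The induction hypothesis applied to the fixed bounded test functions $Q_{n+1}(f)$ and $G_n$ then gives
$$\eta_n^N(Q_{n+1,N}(f)) \xrightarrow{a.s.} \eta_n(Q_{n+1}(f)) = \gamma_{n+1}(f)/\gamma_n(1),\qquad \eta_n^N(G_{n,N}) \xrightarrow{a.s.} \eta_n(G_n),$$
where the denominator limit is strictly positive by $\mathcal{A}$\ref{A1}. Slutsky then yields $\eta_{n+1}^N(f) \to \eta_{n+1}(f)$ a.s., closing the induction; as a by-product the product defining $\gamma_n^N(f)$ converges factor by factor.

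The main delicate point is the adaptive character of the kernel $K_{n+1,\eta_n^N}$: one cannot directly reduce to a classical SMC recursion. The trick is to isolate the Hoeffding--Borel--Cantelli fluctuation, which works thanks to the boundedness of $f$ and is insensitive to the randomness of the kernel, and then to absorb the adaptive randomness into the bias term where $\mathcal{A}$\ref{A2} converts the parameter error $Z_n^N - z_n^*$ into a vanishing additive perturbation. No CLT-type rate is required at this stage, which is why a condition much weaker than $\mathcal{A}$\ref{A2} would suffice here; boundedness of $h_n$ and continuity at $z_n^*$ are more than enough.
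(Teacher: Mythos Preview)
Your proposal is correct and follows essentially the same route as the paper: induction on the level, a Hoeffding/Borel--Cantelli argument for the conditional fluctuation term, Assumption~$\mathcal{A}$\ref{A2} to replace $Q_{n+1,N}$ by $Q_{n+1}$ in the bias, and the induction hypothesis for the remaining (non-adaptive) piece. The only cosmetic difference is that the paper inducts directly on $\gamma_n^N$ via the additive telescoping $|\gamma_n^N(f)-\gamma_n(f)|\leq P_1(N)+P_2(N)+P_3(N)$, which avoids the ratio $\eta_n^N(Q_{n+1,N}f)/\eta_n^N(G_{n,N})$ you carry; in your version this ratio is harmless because $\eta_n(G_n)>0$, but ``Slutsky'' should read ``algebra of almost-sure limits'' there.
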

\begin{proof}
  By definition, it is clear that the convergence of $\gamma_n^N$ implies the
  convergence of $\eta_n^N$. Therefore, it is sufficient to establish the first
  one. We prove by induction that
  $$
  \forall f\in \mathcal{B}_b(E_n),\quad
  \gamma_n^N(f) 
  \xrightarrow[N\rightarrow \infty]{a.s.}
  \gamma_n(f). 
  $$
  
  \medskip

  \noindent
  Step 0:
  \medskip

  The almost sure convergence of $\gamma_0^N=\eta_0^N$ to $\gamma_0=\eta_0$ with respect to a test function in $\mathcal{B}_b(E_0)$ is given by
  the strong law of large numbers.  
  
   \medskip
  \noindent
  Step $n\geq 1$:
  \medskip

  We assume that 
  $$
  Z_{n-1}^N
  \xrightarrow[N\rightarrow \infty]{a.s.}
  z_{n-1}^*
 $$
  and, for any $\phi \in \mathcal{B}_b(E_{n-1})$, 
  $$
  \gamma_{n-1}^N(\phi) 
  \xrightarrow[N\rightarrow \infty]{a.s.}
  \gamma_{n-1}(\phi).
  $$
  For any $f\in \mathcal{B}_b(E_n)$, the triangular inequality yields
  \begin{multline}\label{cjcjaocjjc}
    \abs{\gamma_n^N(f)-\gamma_n(f)}
    \\
    \leq
    \underbrace{\abs{\gamma_n^N(f) - \gamma_{n-1}^NQ_{n,N}(f)}}_{P_1(N)}
    +
    \underbrace{\abs{\gamma_{n-1}^NQ_{n,N}(f) -
    \gamma_{n-1}^NQ_{n}(f)}}_{P_2(N)}
    +
    \underbrace{\abs{\gamma_{n-1}^NQ_{n}(f) - \gamma_{n-1}Q_{n}(f)}}_{P_3(N)}.
  \end{multline}
  \begin{itemize}
    \item
      For $P_1(N)$, we denote 
      $$
      U_{n,N}^i :=
      \eta_{n-1}^N(G_{n-1,N}) f(X_n^i)
      -
      \eta_{n-1}^NQ_{n,N}(f).
      $$
      It is readily seen that
      $$
      P_1(N) = \gamma_{n-1}^N(1)
      \frac{1}{N}
      \sum_{i=1}^N U_{n,N}^i.
      $$
      Given $\mathscr{F}_{n-1}^N:=\sigma(\mathbf{X_{0}},\dots,\mathbf{X_{n-1}})$, the random variables $(U_{n,N}^i)_{1\leq i\leq N}$ are i.i.d. and such that
      $$
      \mathbf{E}\left[
        U_{n,N}^i
        \Given{\mathscr{F}_{n-1}^N}
      \right]
      =
      \eta_{n-1}^N(G_{n-1,N})
      \frac{\eta_{n-1}^N
      Q_{n,N}(f)}{\eta_{n-1}^N\left(G_{n-1,N}\right)}
      -
      \eta_{n-1}^NQ_{n,N}(f)
      =0.
      $$
      Under $\mathcal{A}$\ref{A1}, we can also see that
      $$
      \abs{U_{n,N}^i} \leq  C_n := 2\norm{G_{n-1,\cdot}}_{\infty}\norm{f}_{\infty}.
      $$
      Therefore, for any $\epsilon>0$, Hoeffding's inequality gives
      $$
      \mathbf{P}\left(\abs{\sum_{i=1}^N U_{n,N}^i} \geq N\epsilon\;\Bigg\vert\; \mathscr{F}_{n-1}^N\right)
      \leq
      2\exp\left(
        \frac{-\epsilon^2 N}{2C_n^2}
      \right).
      $$
      Since this upper-bound is deterministic, this amounts to saying that
      $$
      \mathbf{P}\left(
        \abs{\sum_{i=1}^N U_{n,N}^i} \geq N\epsilon
      \right)
      \leq
      2\exp\left(
        \frac{-\epsilon^2 N}{2C_n^2}
      \right).
      $$
      Consequently, Borel-Cantelli
      Lemma ensures that 
      $$
      \frac{1}{N}\sum_{i=1}^N U_{n,N}^i
      \xrightarrow[N\rightarrow \infty]{a.s.}
      0.
      $$
      Combined with the induction hypothesis, we get
      $$
      P_1(N)= \gamma_{n-1}^N(1)
      \frac{1}{N}
      \sum_{i=1}^N U_{n,N}^i
      \xrightarrow[N\rightarrow \infty]{a.s.}
      0.
      $$
    \item
      For $P_2(N)$, $\mathcal{A}$\ref{A2} implies that there exists a function
      $h_{n-1}$ such that
      $$
      Q_{n,N}(f)(x) - Q_n(f) (x)
      =
      \left\langle h_{n-1}(x,Z_{n-1}^N), Z_{n-1}^N - z_{n-1}^*\right\rangle.
      $$
      Hence, since $\abs{h_{n-1}}$ and the potential functions $G_{n,z}$ are bounded,
      Cauchy-Schwarz inequality gives 
      $$
      P_2(N) 
      \leq
      \gamma_{n-1}^N (1) 
      \norm{h_{n-1}}_{\infty}
      \abs{Z_{n-1}^N -z_{n-1}^*}
      \leq
      \left\{
      \prod_{p =0}^{n-2} \norm{G_{p,\cdot}}_{\infty}
      \right\}
      \norm{h_{n-1}}_{\infty}
      \abs{Z_{n-1}^N -z_{n-1}^*}.
       $$
     By induction hypothesis, we conclude that
      $$
      P_2(N)
      \xrightarrow[N\rightarrow \infty]{a.s.}
      0.
      $$
    \item
      For $P_3(N)$, under $\mathcal{A}$\ref{A1}, we have that $Q_n(f)\in \mathcal{B}_b(E_{n-1})$. 
      Thus, the induction hypothesis gives 
            $$
      P_3(N)
      \xrightarrow[N\rightarrow \infty]{a.s.}
      0.
      $$
  \end{itemize}
  Considering (\ref{cjcjaocjjc}), the verification of the convergence
  $$
  \forall f\in\mathcal{B}_b(E_n),
  \quad
  \gamma_n^N(f) 
  \xrightarrow[N\rightarrow \infty]{a.s.}
  \gamma_n(f)
  $$
  is then complete.
\end{proof}

\subsection{Proof of Theorem \ref{thm-old:clt}}\label{sec:pf-clt}

  We prove by induction that
  $$
  \sqrt{N}
  \left(
  \gamma_n^N(f) - \gamma_n(f) 
  \right)
  \xrightarrow[N\rightarrow \infty]{\mathrm{d}}
  \mathcal{N}\left(
    0,\sigma_{\gamma_n}^2(f)
  \right).
  $$
  The verification of step 0 comes from the CLT for i.i.d. random variables. For step $n\geq 1$,
  we suppose that 
  $$
  \forall 0\leq p\leq n-1,
\quad
  \sqrt{N}
  \left(
    \gamma_{p}^N(f) - \gamma_{p}(f) 
  \right)
  \xrightarrow[N\rightarrow \infty]{\mathrm{d}}
  \mathcal{N}\left(
    0,\sigma_{\gamma_{p}}^2(f)
  \right).
  $$
  Notice that, by $\mathcal{A}$\ref{A2}, this implies that
      \begin{equation}\label{pf:ZN}
       \forall 0\leq p\leq n-1,
\quad
      \sqrt{N}
      \abs{Z_{p-1}^N - z_{p-1}^*}
      =
      \mathscr{O}_{\mathbf{p}}\left(1\right).
      \end{equation}
  For any test function $f\in \mathcal{B}_b(E_n)$, we denote $f_p := Q_{p,n}(f)\in \mathcal{B}_b(E_p)$. For any $(x,A)\in E_0\times{\cal B}(E_0)$ we set $Q_0(x,A)=Q_{0,N}(x,A)=\delta_x(A)$. 
  Taking into account the convention $\gamma_{-1}^N = \gamma_0=\eta_0$ and the fact that $\gamma_n=\gamma_0 Q_{0,n}$, we have the telescoping decomposition 
  \begin{equation*}
  \begin{split}
  &\gamma_n^N(f) - \gamma_n(f)
  \\
  &=
  \sum_{p=0}^n\left(
    \gamma_p^N(f_p) - \gamma_{p-1}^NQ_p(f_p)
  \right)
  \\
  &=
  \frac{1}{N}
  \sum_{p=0}^n
  \sum_{i=1}^N
  \left\{
    \left(
    \gamma_p^N(1)f_p(X_p^i) - \gamma_{p-1}^NQ_{p,N}(f_p)
  \right)
  +
  \left(
    \gamma_{p-1}^NQ_{p,N}(f_p)
    -
    \gamma_{p-1}^NQ_{p}(f_p)
  \right)
  \right\}.
  \end{split}
  \end{equation*}
  For $k \in [(n+1)N]$, we denote 
  $$
  p_k := 
  \floor*
  {\frac{k}{N}}
  \qquad
  \text{and}
  \qquad
  i_k :=
  k - p_k \times N.
  $$
  We define the filtration 
  $$
  \forall k \in[(n+1)N],
  \quad
  \mathscr{E}_k^N 
  =
  \mathscr{F}_{p_k-1}^N 
  \vee
  \sigma(X_{p_k}^1,\cdots,X_{p_k}^{i_k}).
  $$
  Then, we set
  $$
  U_k^N 
  := 
  \frac{1}{\sqrt{N}}
  \left(
  \gamma_{p_k}^N(1)f_{p_k}(X_{p_k}^{i_k}) -
  \gamma_{p_{k}-1}^NQ_{p_{k},N}(f_{p_k})
  \right),
  $$
  and
  $$
  D_p^N := 
  \sqrt{N}
  \left(
  \gamma_{p-1}^NQ_{p,N}(f_p)
  -
  \gamma_{p-1}^NQ_{p}(f_p)
  \right),
  $$
  so that
  \begin{equation}\label{pf:clt-decomp}
  \sqrt{N}
  \left(
  \gamma_n^N(f) - \gamma_n(f)
  \right)
  =
  \sum_{k=1}^{(n+1)N}
  \left(
    U_k^N
  +
  \frac{1}{N} D_{p_k}^N
  \right)
  =
  \sum_{k=1}^{(n+1)N}
    U_k^N
  +
  \sum_{p=0}^{n}
    D_{p}^N.
  \end{equation}
  From $\mathcal{A}$\ref{A2}, we know that there exists a function
  $h_{p-1}$ such that 
  \begin{equation*}
    \begin{split}
  D_p^N
  =
  &
  \sqrt{N} \left\langle \gamma_{p-1}^N \left(h_{p-1}(\cdot,Z_{p-1}^N)\right),Z_{p-1}^N - z_{p-1}^*\right\rangle
  \\
  =
  &
  \sqrt{N}
 \left\langle \gamma_{p-1}^N\left(
  h_{p-1}(\cdot,Z_{p-1}^N)
  -
  h_{p-1}(\cdot,z_{p-1}^*)
\right),
  Z_{p-1}^N - z_{p-1}^*\right\rangle
  \\
  &
  +
  \sqrt{N}
  \left\langle \gamma_{p-1}^N\left(
  h_{p-1}(\cdot,z_{p-1}^*)
\right),
  Z_{p-1}^N - z_{p-1}^*\right\rangle.
    \end{split}
  \end{equation*}
  \begin{itemize}
    \item
      For the first part, we have by Cauchy-Schwarz inequality
      \begin{equation*}
        \begin{split}
          &
          \left| \sqrt{N}
 \left\langle \gamma_{p-1}^N\left(
  h_{p-1}(\cdot,Z_{p-1}^N)
  -
  h_{p-1}(\cdot,z_{p-1}^*)
\right),
  Z_{p-1}^N - z_{p-1}^*\right\rangle\right|
         \\
         \leq 
          &
          \sqrt{N}\
          \gamma_{p-1}^N(1)
           \abs{
         Z_{p-1}^N - z_{p-1}^*}
         \sup_{x\in E_{p-1}}
         \abs{
         h_{p-1}(x,Z_{p-1}^N)
         -
         h_{p-1}(x,z_{p-1}^*)
         }.
        \end{split}
      \end{equation*}
        Then, let us consider 
          $$
  \Omega_{p-1} =
  \left\{
    \omega\in\Omega:
    Z_{p-1}^N(\omega)
    \xrightarrow[N\rightarrow \infty]{}
    z_{p-1}^*
  \right\}.
  $$
By Theorem \ref{thm:consistency-gamma}, $\Omega_{p-1}$ has probability one. Therefore, 
      by $\mathcal{A}$\ref{A2}, for all $\omega\in\Omega_{p-1}$ and all $\epsilon>0$, there exists $N(\omega,\epsilon)>0$ such that, for all
      $N>N(\omega,\epsilon)$, 
      \begin{equation*}
       \sup_{x\in E_{p-1}}
       \abs{
       h_{p-1}(x,Z_{p-1}^N(\omega))
       -
       h_{p-1}(x,z_{p-1}^*)
       }
      <
      \epsilon.
      \end{equation*}
      This means that
      $$
      \sup_{x\in E_{p-1}}
      \abs{
      h_{p-1}(x,Z_{p-1}^N)
      -
      h_{p-1}(x,z_{p-1}^*)
      }
      \xrightarrow[N\rightarrow\infty]{a.s.}
      0.
      $$
      Thus, we deduce from (\ref{pf:ZN}) that
      $$
      \sqrt{N}
 \left\langle \gamma_{p-1}^N\left(
  h_{p-1}(\cdot,Z_{p-1}^N)
  -
  h_{p-1}(\cdot,z_{p-1}^*)
\right),
  Z_{p-1}^N - z_{p-1}^*\right\rangle
      \xrightarrow[N\rightarrow\infty]{\mathbf{P}}
      0.
      $$
    \item
      For the second part, since Theorem \ref{thm:consistency-gamma} and 
      $\mathcal{A}$\ref{A2} imply that
      $$
      \gamma_{p-1}^N(h_{p-1}(\cdot,z_{p-1}^*))
      \xrightarrow[N\rightarrow\infty]{a.s.}
      \gamma_{p-1}(h_{p-1}(\cdot,z_{p-1}^*)) = 0,
      $$
      we conclude by \eqref{pf:ZN} that  
      $$
      \sqrt{N}
  \left\langle \gamma_{p-1}^N\left(
  h_{p-1}(\cdot,z_{p-1}^*)
\right),
  Z_{p-1}^N - z_{p-1}^*\right\rangle
      \xrightarrow[N\rightarrow\infty]{\mathbf{P}}
      0.
      $$
  \end{itemize}
  Hence we have proved that
  $$
  D_p^N
  \xrightarrow[N\rightarrow\infty]{\mathbf{P}}
  0,
  $$
  which leads to
  $$
  \sum_{p=0}^{n}
  D_{p}^N
  \xrightarrow[N\rightarrow\infty]{\mathbf{P}}
  0.
  $$
  Next, it is easy to check that $(U_k^N)_{1\leq k\leq (n+1)N}$ is an
  $(\mathscr{E}_k^N)_{1\leq k\leq (n+1)N}$-martingale difference array.  
  In order to apply Theorem 2.3 in \cite{mg-CLT}, we just have to check that
  \begin{itemize}
    \item
      By $\mathcal{A}$\ref{A1}, 
      \begin{equation}\label{pf:max-U}
      \max_{1\leq k\leq (n+1)N}\left|U_k^N\right|
       \leq
      \frac{2}{\sqrt{N}}
            \norm{f}_{\infty}
      \max_{1\leq p\leq n}
      \prod_{q=0}^{p-1}\norm{G_{q,\cdot}}_{\infty}
      \leq
      \frac{2}{\sqrt{N}}
            \norm{f}_{\infty}
      \sum_{p=1}^{n}
      \prod_{q=0}^{p-1}\norm{G_{q,\cdot}}_{\infty},
      \end{equation}
      which shows that $(\max_{1\leq k\leq (n+1)N}|U_k^N|)$ is uniformly
      bounded in $\mathbb{L}^{\mathrm{2}}$-norm.
    \item
      From \eqref{pf:max-U}, we also get that
      $$
      \max_{1\leq k\leq (n+1)N}\left|U_k^N\right|
      \xrightarrow[N\rightarrow\infty]{\mathbf{P}}
      0.
      $$
    \item
      Standard calculation gives 
      $$
      \begin{aligned}
        &
      \sum_{k=1}^{(n+1)N}
      \left(
      U_k^N
      \right)^2
      \\=&
      \sum_{p=0}^n
      \left(
      \gamma_p^N(1)^2 \eta_p^N(f_p^2)
      +
      (\gamma_{p-1}^N Q_{p,N}(f_p))^2
      -
      2      \gamma_p^N(1)\eta_p^N(f_p)
           \gamma_{p-1}^NQ_{p,N}(f_p)
      \right).
      \end{aligned}
      $$
      As shown above, the convergence of $D_p^N$ indicates that
      $$
      \gamma_{p-1}^N Q_{p,N}(f_p)
      -
      \gamma_{p-1}^N Q_{p}(f_p)
      \xrightarrow[N\rightarrow\infty]{\mathbf{P}}
      0.
      $$
      Then, by applying Theorem \ref{thm:consistency-gamma}, we obtain 
      $$
      \sum_{k=1}^{(n+1)N}
      \left(
      U_k^N
      \right)^2
      \xrightarrow[N\rightarrow\infty]{\mathbf{P}}
      \sigma_{\gamma_n}^2(f),
      $$
  \end{itemize}
  Therefore, we have the following central limit theorem 
  $$
  \sum_{k=1}^{(n+1)N}
  U_k^N
  \xrightarrow[N\rightarrow\infty]{\mathrm{d}}
  \mathcal{N}\left(
    0,\sigma_{\gamma_n}^2(f)
  \right).
  $$
  Returning to \eqref{pf:clt-decomp}, 
  the conclusion follows from Slutsky's Lemma. 

\subsection{Proof of Theorem \ref{thm:coalescent-measure}}\label{oachohc}
We want to show that, under $\mathcal{A}$\ref{A1}-$\mathcal{A}$\ref{A2}, for any test functions 
$\phi,\psi\in \mathcal{B}_b(E_n)$ and for any coalescence indicator
$b\in\{0,1\}^{n+1}$, we have
$$
\Gamma_{n,N}^b(\phi\otimes\psi)
-
\Gamma_n^b(\phi\otimes\psi)
=\mathscr{O}_{\mathbf{p}}\left(\frac{1}{\sqrt{N}}\right).
$$
Before proceeding, let us introduce some additional notation.
With a slight abuse of notation, for a coalescence indicator $b= (b_0,\dots,b_n)\in
\{0,1\}^{n+1}$, we denote, for all $0\leq p\leq n$, 
$$
\Gamma_{p}^b := \Gamma_{p}^{(b_0,\dots,b_p)}
\qquad
\text{and}
\qquad
\Gamma_{p,N}^b := \Gamma_{p,N}^{(b_0,\dots,b_p)}
$$
with the convention
$$
\Gamma_{-1,N}^b=\Gamma_{-1}^b
:= \eta_0^{\otimes 2} C_{b_0}.
$$
Note that, with this convention, we have
$$
\Gamma_{p}^b=\Gamma_{p-1}^b Q_p^{\otimes 2}C_{b_p}.
$$
We also remark that, for any $b_n \in \{0,1\}$ and any
$\phi,\psi\in\mathcal{B}_b(E_n)$, there exists $f$ and $g$ in $\mathcal{B}_b(E_{n-1})$
such that
\begin{equation}\label{cnaocn}
Q_{n}^{\otimes 2}C_{b_n} (\phi\otimes \psi)
=f\otimes g.
\end{equation}
Specifically, for $b_n = 0$, it suffices to consider
$f =Q_{n}(\phi)$ and $g =Q_{n}(\psi)$, 
while for $b_n = 1$ one can take
$f =Q_{n}(\phi\psi)$ and $g =Q_{n}(1)=G_{n-1}$. 
As usual, the proof is done by induction. 
\medskip

\noindent
- Step 0:

\begin{itemize}
  \item If $b_0=1$, \eqref{sjcn} and Definition \ref{aecac} give 
    $$
    \mathbf{E}\left[
    \Gamma_{0,N}^{b}(\phi\otimes\psi)
    \right]
    = 
    \mathbf{E}\left[
    \frac{1}{N}\sum_{i=1}^N \phi(X_0^i) \psi(X_0^i)
    \right]
    =
    \eta_0(\phi\psi)=\Gamma_{0}^{b}(\phi\otimes\psi).
    $$
    Hence, the central limit theorem yields
    $$
    \sqrt{N}
    \left(
    \Gamma_{0,N}^{b}(\phi\otimes\psi)
    -
    \Gamma_{0}^{b}(\phi\otimes\psi)
  \right)
  \xrightarrow[N\rightarrow\infty]{\mathrm{d}}
  \mathcal{N}
  \left(
    0,
    \eta_0(\phi^2\psi^2) - \eta_0(\phi\psi)^2
  \right),
    $$
    so that
    $$
    \Gamma_{0,N}^{b}(\phi\otimes\psi)
    -
    \Gamma_{0}^{b}(\phi\otimes\psi)
    =\mathscr{O}_{\mathbf{p}}\left(\frac{1}{\sqrt{N}}\right).
    $$
  \item If $b_0=0$, the central limit theorem ensures that
    $$
    \eta_0^N(\phi) 
    -
    \eta_0(\phi) 
    = \mathscr{O}_{\mathbf{p}}\left(\frac{1}{\sqrt{N}}\right)
    \quad\text{and}\quad
    \eta_0^N(\psi)
    -
    \eta_0(\psi)
    = \mathscr{O}_{\mathbf{p}}\left(\frac{1}{\sqrt{N}}\right).
    $$
    Therefore, we have
    $$
    \begin{aligned}
    &\eta_0^N(\phi)\eta_0^N(\psi)
    -
    \eta_0(\phi)\eta_0(\psi)
    \\&=
    \left(\eta_0^N(\phi) - \eta_0(\phi)\right)\eta_0^N(\psi)
    +
    \eta_0(\phi)
    \left(\eta_0^N(\psi) - \eta_0(\psi)\right)
      \\&=\mathscr{O}_{\mathbf{p}}\left(\frac{1}{\sqrt{N}}\right).
    \end{aligned}
    $$
    Thanks to \eqref{lasjcn}, one has 
    $$
    \Gamma_{0,N}^b(\phi\otimes \psi)
    = \frac{N}{N-1}\left(
    \eta_0^N(\phi)\eta_0^N(\psi)
    -\frac{1}{N^2}\sum_{i=1}^N\phi(X_0^i)\psi(X_0^i)
    \right).
    $$
    Combined with Definition \ref{aecac} and the law of large numbers, one deduces that
    $$
    \Gamma_{0,N}^b(\phi\otimes \psi)
    -
    \Gamma_{0}^b(\phi\otimes \psi)
    =  \Gamma_{0,N}^b(\phi\otimes \psi)
    - \eta_0(\phi)\eta_0(\psi)
    =\mathscr{O}_{\mathbf{p}}\left(\frac{1}{\sqrt{N}}\right).
    $$
\end{itemize}

\noindent
- Step $n\geq 1$:
\medskip

We suppose that for any test functions $f,g\in\mathcal{B}_b(E_{n-1})$ and
coalescence indicator $b$, we have
$$
\Gamma_{n-1,N}^b(f\otimes g)
-
\Gamma_{n-1}^b(f\otimes g)
=\mathscr{O}_{\mathbf{p}}\left(\frac{1}{\sqrt{N}}\right).
$$
Next, we consider the following decomposition
\begin{equation}\label{eq:decomp-Gamma-N}
  \begin{aligned}
\Gamma_{n,N}^b(\phi\otimes \psi)
-
\Gamma_{n}^b(\phi\otimes \psi)
    =&
\underbrace{
\Gamma_{n,N}^b(\phi\otimes \psi)
-
\Gamma_{n-1,N}^bQ_{n,N}^{\otimes 2}C_{b_n}(\phi\otimes \psi)
}_{R_1(N)}
   \\&+
\underbrace{
\Gamma_{n-1,N}^bQ_{n,N}^{\otimes 2}C_{b_n}(\phi\otimes \psi)
-
\Gamma_{n-1,N}^bQ_{n}^{\otimes 2}C_{b_n}(\phi\otimes \psi)
}_{R_2(N)}
\\
     &+
\underbrace{
\Gamma_{n-1,N}^bQ_{n}^{\otimes 2}C_{b_n}(\phi\otimes \psi)
-
\Gamma_{n-1}^bQ_{n}^{\otimes 2}C_{b_n}(\phi\otimes \psi).
}_{R_3(N)}
\end{aligned}
\end{equation}
The tools to terminate the proof are the following ones:
\begin{itemize}
  \item Lemma \ref{lm:cvg-proba-Gamma} shows that
    $$
    R_1(N)
    =\mathscr{O}_{\mathbf{p}}\left(\frac{1}{\sqrt{N}}\right).
    $$
  \item Lemma \ref{lm:cvg-Gamma-Q} and the fact that one may write $C_{b_n}(\phi\otimes \psi)$ as $f\otimes g$ for any $b_n$  ensure that
    $$
    R_2(N)
    =\mathscr{O}_{\mathbf{p}}\left(\frac{1}{\sqrt{N}}\right).
    $$
  \item Finally, the convergence rate
    $$
    R_3(N)
    =\mathscr{O}_{\mathbf{p}}\left(\frac{1}{\sqrt{N}}\right).
    $$
    is a direct consequence of (\ref{cnaocn}) and the induction hypothesis.
\end{itemize}

\subsection{Technical results}\label{sec:TR}

This section presents some useful technical results. Before going further, remind that
$$
\Gamma_{n,N}^b(1):=
\gamma_n^N(1)^2
\frac{N^{n-1}}{(N-1)^{n+1}}
\sum_{\ell_{0:n}^{[2]}\in \left((N)^2\right)^{\times(n+1)}}
\left\{\prod_{p=0}^{n-1}
\lambda_p^b(A_p^{\ell_{p+1}^{[2]}},\ell_p^{[2]})
\right\}.
$$
If we set 
\begin{equation}\label{eq:LambdaElln}
\Lambda_{n}^{\ell_n^{[2]}}
:=
\sum_{\ell_{0:n-1}^{[2]}\in \left((N)^2\right)^{\times n}}
\left\{\prod_{p=0}^{n-1}
\lambda_p^b(A_p^{\ell_{p+1}^{[2]}},\ell_p^{[2]})
\right\}
\end{equation}
together with the convention
$
\Lambda_{0}^{\ell_0^{[2]}}
:=1,
$ 
we may write
\begin{equation}\label{pf:Lambda}
\Gamma_{n,N}^b(1):=
\gamma_n^N(1)^2
\frac{N^{n-1}}{(N-1)^{n+1}}
\sum_{\ell_n^{[2]}\in (N)^2}
\Lambda_{n}^{\ell_n^{[2]}},
\end{equation}
so that
\begin{equation}\label{pf:Lambda2}
\Gamma_{n,N}^b(1)^2=
\gamma_n^N(1)^4
\left(\frac{N^{n-1}}{(N-1)^{n+1}}\right)^2
\sum_{(\ell_n^{[2]},\ell_n^{'[2]}) \in ((N)^2)^{\times 2}}
\Lambda_{n}^{\ell_n^{[2]}}\Lambda_{n}^{\ell_n^{'[2]}}.
\end{equation}
Note that \eqref{pf:Lambda} is still true when $n=0$.
Then, for $n\geq 1$, we have by definition
\begin{equation}\label{pf:Lambda1}
\Lambda_{n}^{\ell_{n}^{[2]}}
=
\sum_{\ell_{n-1}^{[2]}\in (N)^2} \Lambda_{n-1}^{\ell_{n-1}^{[2]}}\
\lambda_{n-1}^b(A_{n-1}^{\ell_n^{[2]}},\ell_{n-1}^{[2]}).
\end{equation}
This decomposition will appear several times in the sequel for it is a keystone to study the behavior of the coalescent tree-based measures.

\begin{proposition}\label{prop:Gamma1}
  Assume $\mathcal{A}$\ref{A1}. For any coalescence indicator $b$, we have
  $$
  \sup_{N>1}
  \mathbf{E}
  \left[
    \Gamma_{n,N}^b(1)^2
  \right]
  <+\infty.
  $$
  In particular, the sequence $(\Gamma_{n,N}^b(1); N\geq 1)$ is uniformly tight.
  \end{proposition}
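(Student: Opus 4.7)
The plan is to prove by induction on $n$ the slightly stronger statement that, for every $F\in\mathcal{B}_b(E_n^2)$ and every $b\in\{0,1\}^{n+1}$, $\sup_{N>1}\mathbf{E}[\Gamma_{n,N}^b(F)^2]\leq C_{n,b}\,\norm{F}_\infty^2$, where $C_{n,b}$ depends only on $n$, $b$ and on $\norm{G_{p,\cdot}}_\infty$ for $0\leq p\leq n-1$. The proposition follows by specializing to $F\equiv 1$, and the uniform tightness of $(\Gamma_{n,N}^b(1))_{N}$ is then a direct consequence of Markov's inequality. The base case $n=0$ is immediate: $\Gamma_{0,N}^b(F)$ is an arithmetic average of $N(N-1)$ quantities each bounded by $\norm{F}_\infty$, so $|\Gamma_{0,N}^b(F)|\leq \norm{F}_\infty$ almost surely.

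The observation driving the inductive step is that, even though $\bar{\Gamma}_{n,N}^b(F)$ is not in general deterministically bounded (e.g.\ for $n=1$, $b=(1,0)$ and $F\equiv 1$, $\bar{\Gamma}_{1,N}^{(1,0)}(1)$ can be of order $N$ on the event that many particles share a common parent), the prefactor $\gamma_n^N(1)^2=\gamma_{n-1}^N(1)^2\,\eta_{n-1}^N(G_{n-1,N})^2$ in $\Gamma_{n,N}^b$ provides the correct renormalization. Upon squaring, it yields $\gamma_n^N(1)^4=\gamma_{n-1}^N(1)^4\,\eta_{n-1}^N(G_{n-1,N})^4$, which exactly cancels the $\eta_{n-1}^N(G_{n-1,N})^{-4}$ appearing in the four-fold products of multinomial selection weights $W_{n-1}^k=G_{n-1,N}(X_{n-1}^k)/(N\,\eta_{n-1}^N(G_{n-1,N}))$ when one computes conditional expectations against $\mathscr{G}_{n-1}^N:=\sigma(\mathbf{X_0},A_0,\mathbf{X_1},\ldots,\mathbf{X_{n-1}})$. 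Crucially, $\gamma_n^N(1)^2$ is $\mathscr{G}_{n-1}^N$-measurable and can be pulled out of this conditional expectation.

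Concretely, one substitutes the recursion \eqref{pf:Lambda1} $\Lambda_n^{\ell_n^{[2]}}=\sum_{\ell_{n-1}^{[2]}}\Lambda_{n-1}^{\ell_{n-1}^{[2]}}\,\lambda_{n-1}^b(A_{n-1}^{\ell_n^{[2]}},\ell_{n-1}^{[2]})$ into the expression for $\Gamma_{n,N}^b(F)^2$ obtained from \eqref{pf:Lambda2} (with $\widetilde F:=C_{b_n}(F)$ inserted) and then takes $\mathbf{E}[\,\cdot\,\mid\mathscr{G}_{n-1}^N]$. Given $\mathscr{G}_{n-1}^N$, the $(A_{n-1}^i)_{1\leq i\leq N}$ are i.i.d.\ with law $\sum_k W_{n-1}^k\delta_k$ and, given additionally $A_{n-1}$, the $X_n^i$ are independent with respective laws $M_{n,N}(X_{n-1}^{A_{n-1}^i},\cdot)$. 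All moments of products of ancestor indicators $\mathbf{1}_{A_{n-1}^i=k}$ factorise into products of $W_{n-1}^k$'s, while integrations of $\widetilde F$ against $M_{n,N}^{\otimes 2}$ are bounded in sup-norm by $\norm{F}_\infty$. After regrouping the sums in \eqref{pf:Lambda2} according to the overlap pattern of $(\ell_n^{[2]},\ell_n^{'[2]})$ (sharing $0$, $1$ or $2$ indices at level $n$) and applying the above cancellation, each resulting contribution takes the form $\Gamma_{n-1,N}^{b'}(\widehat F)$, or a product of two such objects, for some $b'\in\{0,1\}^{n}$ and some $\widehat F\in\mathcal{B}_b(E_{n-1}^{2})$ with $\norm{\widehat F}_\infty\leq c_{n,b}\,\norm{F}_\infty^2$, where $c_{n,b}$ involves only $\norm{G_{n-1,\cdot}}_\infty$ raised to a power determined by $(b_{n-1},b_n)$ and the overlap pattern.

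The only genuine difficulty is the combinatorial bookkeeping of the various overlap patterns of $(\ell_n^{[2]},\ell_n^{'[2]})$ and $(\ell_{n-1}^{[2]},\ell_{n-1}^{'[2]})$ crossed with the four possibilities for $(b_{n-1},b_n)\in\{0,1\}^2$; however, the number of cases is finite and independent of $N$. In each case the induction hypothesis at level $n-1$ applies and yields a bound of the form $C_{n-1,b'}\,\norm{\widehat F}_\infty^2$, and summing the finitely many contributions completes the inductive step.
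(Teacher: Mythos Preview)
Your overall strategy is the same as the paper's: induction on $n$, condition on $\mathscr{G}_{n-1}^N$, expand $\Lambda_n$ via \eqref{pf:Lambda1}, split according to the overlap pattern of $(\ell_n^{[2]},\ell_n^{'[2]})$, and exploit the cancellation between $\gamma_n^N(1)^4=\gamma_{n-1}^N(1)^4\,\eta_{n-1}^N(G_{n-1,N})^4$ and the multinomial weights. That key cancellation is exactly right and is the heart of the argument.

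However, your claim that ``each resulting contribution takes the form $\Gamma_{n-1,N}^{b'}(\widehat F)$, or a product of two such objects'' is not correct as stated. In the overlap cases (one or two shared indices at level $n$), the conditional expectation of $\lambda_{n-1}^b(A_{n-1}^{\ell_n^{[2]}},\ell_{n-1}^{[2]})\,\lambda_{n-1}^b(A_{n-1}^{\ell_n^{'[2]}},\ell_{n-1}^{'[2]})$ imposes constraints \emph{linking} $\ell_{n-1}^{[2]}$ and $\ell_{n-1}^{'[2]}$. For example, when $b_{n-1}=1$ and $\ell_n^{[2]}=\ell_n^{'[2]}$, the product of indicators forces $\ell_{n-1}^1=\ell_{n-1}^{'1}$ while $\ell_{n-1}^2$ and $\ell_{n-1}^{'2}$ remain free. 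The resulting partially linked sum $\sum_{\ell_{n-1}^1=\ell_{n-1}^{'1}}\Lambda_{n-1}^{\ell_{n-1}^{[2]}}\Lambda_{n-1}^{\ell_{n-1}^{'[2]}}(\cdots)$ is a genuine three-index object on $E_{n-1}$ that neither factors as a product of two $\Gamma_{n-1,N}^{b'}$ evaluations nor reduces to a single one; no choice of $b'\in\{0,1\}^n$ and $\widehat F\in\mathcal{B}_b(E_{n-1}^2)$ captures it.

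The fix is easy and is exactly what the paper does: rather than trying to identify the exact structure, use the nonnegativity of $\Lambda_{n-1}^{\ell_{n-1}^{[2]}}$ to bound every constrained sum by the full sum
\[
\sum_{(\ell_{n-1}^{[2]},\ell_{n-1}^{'[2]})\in((N)^2)^{\times 2}}\Lambda_{n-1}^{\ell_{n-1}^{[2]}}\Lambda_{n-1}^{\ell_{n-1}^{'[2]}}
=\Bigl(\sum_{\ell_{n-1}^{[2]}\in(N)^2}\Lambda_{n-1}^{\ell_{n-1}^{[2]}}\Bigr)^2,
\]
which by \eqref{pf:Lambda2} is proportional to $\Gamma_{n-1,N}^b(1)^2$; then apply the induction hypothesis directly to $\mathbf{E}[\Gamma_{n-1,N}^b(1)^2]$. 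In the same vein, your generalization to arbitrary $F$ is unnecessary: since $\Gamma_{n,N}^b$ is a nonnegative random measure, one has $|\Gamma_{n,N}^b(F)|\leq\|F\|_\infty\,\Gamma_{n,N}^b(1)$ almost surely, so the case $F\equiv 1$ already yields the stronger statement.
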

\begin{proof}
  We give a proof by induction. The verification for step $0$ is trivial as
  $\Gamma_{0,N}^b(1) = 1$.
  For $n\geq 1$, we suppose that
  $$
    \sup_{N>1}
    \mathbf{E}
    \left[
      \Gamma_{n-1,N}^b(1)^2
    \right]
    <+\infty.
   $$
 As defined in Section \ref{sec-old:IPS}, the IPS  associated with ASMC is a Markov chain $(\mathbf{X_{n}})_{n\geq 0}$ with genealogy $(\mathbf{A_{n}})_{n\geq 0}$ tracking the indice of the parent of each particle at each level. More precisely, $A_{p-1}^i =j$ means that the parent of the particle $X_{p}^i$ is $X_{p-1}^j$. Accordingly, the filtration 
$(\mathscr{G}_n^N)_{n\geq 0}$ with the 
genealogy of the IPS is defined by
$$
\mathscr{G}_0^N := \sigma(\mathbf{X_{0}})
$$
and, for $n\geq 1$, 
$$
\mathscr{G}_n^N :=
\sigma(\mathbf{A_{0}},\dots,\mathbf{A_{n-1}},\mathbf{X_{0}},\dots,\mathbf{X_{n}}).
$$
 By combining \eqref{pf:Lambda2} and \eqref{pf:Lambda1}, and taking into account that
 $$
 \gamma_n^N(1)=\prod_{p=0}^{n-1} \eta_p^N(G_{p,N})=\gamma_{n-1}^N(1)\eta_{n-1}^N(G_{n-1,N})=\gamma_{n-1}^N(1)m(\mathbf{X_{n-1}})(G_{n-1,N})
$$
 is $\mathscr{G}_{n-1}^N$-measurable, we have
  \begin{multline}\label{pf:Lambda-decomp}
  \mathbf{E}
  \left[
  \Gamma_{n,N}^b(1)^2
  \Given{\mathscr{G}_{n-1}^N}\right]
  =
  \gamma_{n-1}^N(1)^4
  \left(\frac{N^{n-1}}{(N-1)^{n+1}}\right)^2
  \sum_{(\ell_{n-1}^{[2]},\ell_{n-1}^{'[2]}) \in ((N)^2)^{\times 2}}
  \Lambda_{n-1}^{\ell_{n-1}^{[2]}}
  \Lambda_{n-1}^{\ell_{n-1}^{'[2]}}
  \\
  \sum_{(\ell_n^{[2]},\ell_n^{'[2]}) \in ((N)^2)^{\times 2}}\ m(\mathbf{X_{n-1}})(G_{n-1,N})^4\ \mathbf{E}\left[
    \lambda_{n-1}^b(A_{n-1}^{\ell_n^{[2]}},\ell_{n-1}^{[2]})
    \lambda_{n-1}^b(A_{n-1}^{\ell_n^{'[2]}},\ell_{n-1}^{'[2]})
    \;\Bigg\vert\;\mathscr{G}_{n-1}^N
  \right].
  \end{multline}
  For the notation concerning the indices in the IPS, we
    use 
    $$
    [N]_p^q :=\left\{(i_1,\dots,i_q)\in [N]^q: \mathrm{Card}\{i_1,\dots,i_q\} =p\right\}.
    $$
    In particular, we denote $(N)^q := [N]^q_q$.
    We also write
        $$
    \left((N)^2\right)^{\times q}:=\underbrace{(N)^2 \times (N)^2\times \cdots \times (N)^2}_{q \text{
    times}}.
    $$
    With a slight abuse of notation, we admit that
    $$
    \left((i,j),(k,l)\right) = (i,j,k,l).
    $$
    With this notation, for $N\geq 4$, we have the decomposition
$$((N)^2)^{\times 2} 
  =
  \left(
  ((N)^2)^{\times 2} 
  \cap [N]_2^4
  \right)
  \cup
  \left(
  ((N)^2)^{\times 2} 
  \cap [N]_3^4
  \right)
  \cup
  (N)^4.$$
The idea of the proof consists in analyzing (\ref{pf:Lambda-decomp}) with respect to the three terms that appear in the right-hand side of the latter. Recall from (\ref{eq:selection}) that, given $\mathbf{X_{n-1}}$, we make an
    independent multinomial selection of the parent of each particle at step $n$ according to the discrete probability measure
    $$
    S_{n-1,N}(\mathbf{X_{n-1}},\cdot)
    =
    \sum_{k=1}^{N}
    \frac{G_{n-1,N}(X_{n-1}^k)}{\sum_{j=1}^N
    G_{n-1,N}(X_{n-1}^j)}
    \delta_k=\sum_{k=1}^{N}
    \frac{G_{n-1,N}(X_{n-1}^k)}{N\ m(\mathbf{X_{n-1}})(G_{n-1,N})}
    \delta_k,
   $$
   with, for all $k\in[N]$,
   $$0< \frac{G_{n-1,N}(X_{n-1}^k)}{N\ m(\mathbf{X_{n-1}})(G_{n-1,N})}\leq  \frac{\norm{G_{n-1,\cdot}}_{\infty}}{N\ m(\mathbf{X_{n-1}})(G_{n-1,N})}.$$
  We also recall that
   $$
   \lambda_{n-1}^b(A_{n-1}^{\ell_n^{[2]}},\ell_{n-1}^{[2]})
   =
   \mathbf{1}_{\{b_{n-1} = 0\}} \mathbf{1}_{\{A_{n-1}^{\ell_n^1}  =
   \ell_{n-1}^1\neq A_{n-1}^{\ell_n^2}=\ell_{n-1}^2\}}
  +
  \mathbf{1}_{\{b_{n-1} = 1\}} \mathbf{1}_{\{A_{n-1}^{\ell_n^1} =
      \ell_{n-1}^1=A_{n-1}^{\ell_n^2}
  \neq\ell_{n-1}^2\}}.
   $$
  \begin{itemize}
    \item Case 1: 
      $
      (\ell_n^{[2]},\ell_n^{'[2]})  \in
      ((N)^2)^{\times 2} 
      \cap [N]_2^4.
      $
    
      In this case, there are only two distinct random variables among 
      $A_{n-1}^{\ell_n^1}$, $A_{n-1}^{\ell_n^2}$, $A_{n-1}^{\ell_n^{'1}}$,
      $A_{n-1}^{\ell_n^{'2}}$.
      Recall that $\ell_n^1 \neq \ell_n^2$ by construction.
      Let us first suppose that
      $$
      \ell_n^1 = \ell_n^{'1}
      \quad\text{and}\quad
      \ell_n^2 = \ell_n^{'2}.
      $$
      Thus, we deduce that
      $$
      \begin{aligned}
        &
      \mathbf{E}\left[
        \lambda_{n-1}^b(A_{n-1}^{\ell_n^{[2]}},\ell_{n-1}^{[2]})
        \lambda_{n-1}^b(A_{n-1}^{\ell_n^{'[2]}},\ell_{n-1}^{'[2]})
        \;\Bigg\vert\;\mathscr{G}_{n-1}^N
      \right]
      \\ &\leq
      \ \mathbf{E}\left[
        \lambda_{n-1}^b(A_{n-1}^{\ell_n^{[2]}},\ell_{n-1}^{[2]})
        \;\Bigg\vert\;\mathscr{G}_{n-1}^N
      \right]
      \\&\ \ \ \ =
      \mathbf{1}_{\{b_{n-1} = 0\}}
      \mathbf{P}\left(
      A_{n-1}^{\ell_n^1} =
      \ell_{n-1}^1,A_{n-1}^{\ell_n^2}
      =\ell_{n-1}^2
        \;\bigg\vert\;\mathscr{G}_{n-1}^N
      \right)\\
            &\ \ \ \ \ \ \ \ +
      \mathbf{1}_{\{b_{n-1} = 1\}}
      \mathbf{P}\left(
      A_{n-1}^{\ell_n^1} =
      \ell_{n-1}^1=A_{n-1}^{\ell_n^2}
        \;\bigg\vert\;\mathscr{G}_{n-1}^N
      \right)
      \\
      &\leq
      \left(
        \frac{\norm{G_{n-1,\cdot}}_{\infty}}{N\ m(\mathbf{X_{n-1}})(G_{n-1,N})}
      \right)^2.
      \end{aligned}
      $$
      The analysis for the case where
      $$
      \ell_n^1 = \ell_n^{'2}
      \quad\text{and}\quad
      \ell_n^2 = \ell_n^{'1}
      $$
      is analogue.
      Hence, we conclude that
      \begin{equation}\label{pf:case1}
      \mathbf{E}\left[
        \lambda_{n-1}^b(A_{n-1}^{\ell_n^{[2]}},\ell_{n-1}^{[2]})
        \lambda_{n-1}^b(A_{n-1}^{\ell_n^{'[2]}},\ell_{n-1}^{'[2]})
        \;\Bigg\vert\;\mathscr{G}_{n-1}^N
      \right]
      \leq
      \left(
        \frac{\norm{G_{n-1,\cdot}}_{\infty}}{N\ m(\mathbf{X_{n-1}})(G_{n-1,N})}
      \right)^2.
      \end{equation}
      Meanwhile, we notice that
      $$
      \#\left(
      ((N)^2)^{\times 2} 
      \cap [N]_2^4
      \right)
      =2N(N-1).
      $$
      Putting all things together yields
      \begin{multline*}
      \sum_{
      (\ell_n^{[2]},\ell_n^{'[2]})  \in
      ((N)^2)^{\times 2} 
      \cap [N]_2^4
      }
      m(\mathbf{X_{n-1}})(G_{n-1,N})^4\
      \mathbf{E}\left[
        \lambda_{n-1}^b(A_{n-1}^{\ell_n^{[2]}},\ell_{n-1}^{[2]})
        \lambda_{n-1}^b(A_{n-1}^{\ell_n^{'[2]}},\ell_{n-1}^{'[2]})
        \;\Bigg\vert\;\mathscr{G}_{n-1}^N
      \right]
      \\
      \leq
      \frac{2(N-1)}{N}  \norm{G_{n-1,\cdot}}_{\infty}^4.
      \end{multline*}
    \item Case 2: 
      $
      (\ell_n^{[2]},\ell_n^{'[2]})  \in
      ((N)^2)^{\times 2} 
      \cap [N]_3^4.
      $

      As noticed in the previous case, the
      number of different indices within
      $
      (\ell_n^{[2]},\ell_n^{'[2]})
      $
      is the only thing that matters for the upper-bound in \eqref{pf:case1}.
      Accordingly, the same reasoning gives this time
      $$
      \mathbf{E}\left[
        \lambda_{n-1}^b(A_{n-1}^{\ell_n^{[2]}},\ell_{n-1}^{[2]})
        \lambda_{n-1}^b(A_{n-1}^{\ell_n^{'[2]}},\ell_{n-1}^{'[2]})
        \;\Bigg\vert\;\mathscr{G}_{n-1}^N
      \right]
      \leq
      \left(
        \frac{\norm{G_{n-1,\cdot}}_{\infty}}{N\ m(\mathbf{X_{n-1}})(G_{n-1,N})}
      \right)^3.
      $$
      Since the total number of choices is 
      $$
      \#\left(
      ((N)^2)^{\times 2} 
      \cap [N]_3^4
      \right)
      =4N(N-1)(N-2),
      $$
      it comes
      $$
      \begin{aligned}
      \sum_{
      (\ell_n^{[2]},\ell_n^{'[2]})  \in
      ((N)^2)^{\times 2} 
      \cap [N]_3^4
      }
      m(\mathbf{X_{n-1}})(G_{n-1,N})^4\
      \mathbf{E}\left[
        \lambda_{n-1}^b(A_{n-1}^{\ell_n^{[2]}},\ell_{n-1}^{[2]})
        \lambda_{n-1}^b(A_{n-1}^{\ell_n^{'[2]}},\ell_{n-1}^{'[2]})
        \;\Bigg\vert\;\mathscr{G}_{n-1}^N
      \right]
      \\
      \leq
      \frac{4(N-1)(N-2)}{N^2}  \norm{G_{n-1,\cdot}}_{\infty}^4.
        \end{aligned}
        $$
    \item Case 3: 
      $
      (\ell_n^{[2]},\ell_n^{'[2]})  \in
      (N)^4.
      $
      
 This time, we get
      $$
      \mathbf{E}\left[
        \lambda_{n-1}^b(A_{n-1}^{\ell_n^{[2]}},\ell_{n-1}^{[2]})
        \lambda_{n-1}^b(A_{n-1}^{\ell_n^{'[2]}},\ell_{n-1}^{'[2]})
        \;\Bigg\vert\;\mathscr{G}_{n-1}^N
      \right]
      \leq
      \left(
        \frac{\norm{G_{n-1,\cdot}}_{\infty}}{N\ m(\mathbf{X_{n-1}})(G_{n-1,N})}
      \right)^4,
      $$
      and
      $$
      \#\left(
        (N)^4
      \right)
      =N(N-1)(N-2)(N-3),
      $$
 so that    
 $$
 \begin{aligned}
 \sum_{
 (\ell_n^{[2]},\ell_n^{'[2]})  \in
 (N)^4
 }
 m(\mathbf{X_{n-1}})(G_{n-1,N})^4\
 \mathbf{E}\left[
   \lambda_{n-1}^b(A_{n-1}^{\ell_n^{[2]}},\ell_{n-1}^{[2]})
   \lambda_{n-1}^b(A_{n-1}^{\ell_n^{'[2]}},\ell_{n-1}^{'[2]})
   \;\Bigg\vert\;\mathscr{G}_{n-1}^N
 \right]
 \\
 \leq
 \frac{(N-1)(N-2)(N-3)}{N^3}  \norm{G_{n-1,\cdot}}_{\infty}^4.
 \end{aligned}
 $$
  \end{itemize}
  As a consequence, since
  $$2+\frac{4(N-2)}{N}+\frac{(N-2)(N-3)}{N^2}\leq 7,$$
  an upper-bound for (\ref{pf:Lambda-decomp}) is
  $$
  \begin{aligned}
    &
  \mathbf{E}
  \left[
  \Gamma_{n,N}^b(1)^2
  \Given{\mathscr{G}_{n-1}^N}\right]
  \\\leq&
 7  \left(\frac{N^{n-1}}{(N-1)^{n+1}}\right)^2\frac{N-1}{N}\norm{G_{n-1,\cdot}}_{\infty}^4
 \gamma_{n-1}^N(1)^4 \sum_{(\ell_{n-1}^{[2]},\ell_{n-1}^{'[2]}) \in ((N)^2)^{\times 2}}
  \Lambda_{n-1}^{\ell_{n-1}^{[2]}}
  \Lambda_{n-1}^{\ell_{n-1}^{'[2]}}.
  \end{aligned}
  $$
Replacing $n$ with $(n-1)$ in $(\ref{pf:Lambda2})$ allows us to conclude that
$$ \mathbf{E}
  \left[
  \Gamma_{n,N}^b(1)^2
  \Given{\mathscr{G}_{n-1}^N}\right]
  \leq
 7
  \frac{N}{N-1}\norm{G_{n-1,\cdot}}_{\infty}^4\Gamma_{n-1,N}^b(1)^2.$$  
Finally, by applying the induction hypothesis, we have 
$$
\sup_{N\geq 4}
\mathbf{E}
\left[
  \Gamma_{n,N}^b(1)^2
\right]
=
\sup_{N\geq4}
\mathbf{E}
\left[
\mathbf{E}
\left[
\Gamma_{n,N}^b(1)^2
\Given{\mathscr{G}_{n-1}^N}\right]
\right]
\leq 
\frac{28}{3}\norm{G_{n-1,\cdot}}_{\infty}^4
\sup_{N\geq4}
\mathbf{E}
\left[
  \Gamma_{n-1,N}^b(1)^2
\right]
<+\infty,
$$
which ends the proof of Proposition \ref{prop:Gamma1}.
\end{proof}

\begin{lemma}
  \label{lm:cvg-proba-Gamma}
  Under $\mathcal{A}$\ref{A1}, for any test functions $f,g\in \mathcal{B}_b(E_n)$, we
  have, for all $n\geq 1$, 
  \begin{equation}
    \label{pf:cond-Gamma}
  \mathbf{E}
  \left[
    \Gamma_{n,N}^b(f\otimes g) 
  \;\bigg\vert\;
  \mathscr{G}_{n-1}^N
  \right]
  =
  \Gamma_{n-1,N}^b Q_{n,N}^{\otimes 2}C_{b_n}(f\otimes g),
  \end{equation}
  as well as
  $$
  \Gamma_{n,N}^b(f\otimes g) 
  -
  \Gamma_{n-1,N}^b Q_{n,N}^{\otimes 2}C_{b_n}(f\otimes g)
  =\mathscr{O}_{\mathbf{p}}\left(\frac{1}{\sqrt{N}}\right).
  $$
\end{lemma}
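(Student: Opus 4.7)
The plan is to establish the conditional expectation identity by a direct computation and then deduce the $\mathscr{O}_{\mathbf{p}}(1/\sqrt{N})$ rate from a conditional variance bound rooted in Proposition \ref{prop:Gamma1}.

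For the identity \eqref{pf:cond-Gamma}, I would expand $\Gamma_{n,N}^b(f\otimes g)$ as a sum over $\ell_{0:n}^{[2]}$ and observe that the only factors not $\mathscr{G}_{n-1}^N$-measurable are $\lambda_{n-1}^b(A_{n-1}^{\ell_n^{[2]}},\ell_{n-1}^{[2]})$ and $C_{b_n}(f\otimes g)(X_n^{\ell_n^{[2]}})$. Conditioning further on the intermediate filtration $\mathscr{H}_{n-1}^N := \mathscr{G}_{n-1}^N\vee\sigma(\mathbf{A_{n-1}})$ and using $\ell_n^1\neq\ell_n^2$, the particles $X_n^{\ell_n^1}$ and $X_n^{\ell_n^2}$ are conditionally independent draws from $M_{n,N}$ started at their respective parents, so the inner expectation equals $M_{n,N}^{\otimes 2} C_{b_n}(f\otimes g)(X_{n-1}^{A_{n-1}^{\ell_n^1}},X_{n-1}^{A_{n-1}^{\ell_n^2}})$. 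Averaging then over the independent multinomial selections $A_{n-1}^{\ell_n^1}$ and $A_{n-1}^{\ell_n^2}$ produces the product of selection weights $G_{n-1,N}(\cdot)\,G_{n-1,N}(\cdot)/[N\,m(\mathbf{X_{n-1}})(G_{n-1,N})]^2$, which combines with $M_{n,N}^{\otimes 2}$ into $Q_{n,N}^{\otimes 2}$. The indicator $\lambda_{n-1}^b$ then enforces either distinct parents (case $b_{n-1}=0$, giving the identity $C_0$) or a single shared parent (case $b_{n-1}=1$, giving $C_1$), so the conditional expectation of the inner quantity equals $C_{b_{n-1}}Q_{n,N}^{\otimes 2}C_{b_n}(f\otimes g)(X_{n-1}^{\ell_{n-1}^{[2]}})/[N\,m(\mathbf{X_{n-1}})(G_{n-1,N})]^2$. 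Since the resulting expression no longer depends on $\ell_n^{[2]}$, the outer sum over $\ell_n^{[2]}\in(N)^2$ contributes a factor $N(N-1)$, which together with the identity $\gamma_n^N(1)^2/m(\mathbf{X_{n-1}})(G_{n-1,N})^2 = \gamma_{n-1}^N(1)^2$ and the algebraic simplification of the normalization produces exactly $\Gamma_{n-1,N}^b[Q_{n,N}^{\otimes 2}C_{b_n}(f\otimes g)]$, as claimed.

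For the rate, I would set $\Delta_N := \Gamma_{n,N}^b(f\otimes g) - \mathbf{E}[\Gamma_{n,N}^b(f\otimes g)\mid\mathscr{G}_{n-1}^N]$ and invoke Chebyshev's inequality, so it suffices to prove $\mathbf{E}[\Delta_N^2]=O(1/N)$. To that end, I would compute the conditional second moment by the same three-case decomposition of $((N)^2)^{\times 2}$ used in the proof of Proposition \ref{prop:Gamma1}, splitting pairs $(\ell_n^{[2]},\ell_n^{\prime[2]})$ according to whether $2$, $3$, or $4$ distinct indices appear. The quadruples in $(N)^4$ carry four conditionally independent particles, so the conditional expectation factorizes and, after expanding $N(N-1)(N-2)(N-3)=[N(N-1)]^2(1-O(1/N))$, reproduces $(\mathbf{E}[\Gamma_{n,N}^b(f\otimes g)\mid\mathscr{G}_{n-1}^N])^2$ up to an $O(1/N)$ multiplicative correction. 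The configurations with $3$ or $2$ distinct indices contribute only $O(N^3)$ and $O(N^2)$ non-zero summands respectively, each controlled by $\|f\|_\infty\|g\|_\infty$ and by the uniform boundedness of $\|G_{n-1,\cdot}\|_\infty$ from $\mathcal{A}$\ref{A1}, for a total contribution of order $\Gamma_{n-1,N}^b(1)^2/N$. Averaging over $\mathscr{G}_{n-1}^N$ and using the uniform $L^2$ bound $\sup_N\mathbf{E}[\Gamma_{n-1,N}^b(1)^2]<\infty$ of Proposition \ref{prop:Gamma1} then yields $\mathbf{E}[\Delta_N^2]=O(1/N)$.

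The main obstacle is the bookkeeping in the second-moment decomposition: one must verify that the ``leading'' contribution from $(N)^4$ cancels the square of the conditional mean up to the required $O(1/N)$ precision, while carefully counting the overlapping-index pairs so that each of the remaining covariances is bounded by $\Gamma_{n-1,N}^b(1)^2/N$ times a finite constant. Once this delicate accounting is carried out, propagating the $L^2$ estimates from level $n-1$ to level $n$ is a routine application of Proposition \ref{prop:Gamma1} and Chebyshev's inequality.
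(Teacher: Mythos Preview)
Your treatment of the identity \eqref{pf:cond-Gamma} is correct and amounts to the same computation the paper carries out via Proposition \ref{lm:LackOfBias} and the duality formula of Lemma \ref{lm:duality}: conditioning first on $\mathscr{G}_{n-1}^N\vee\sigma(\mathbf{A_{n-1}})$ and then integrating out the multinomial selections is exactly what those results encode.

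The gap is in the variance step. Your claim that the overlapping configurations contribute ``a total of order $\Gamma_{n-1,N}^b(1)^2/N$'' does not follow from the counting plus boundedness you invoke. After conditioning on $\mathscr{G}_{n-1}^N$, a pair $(\ell_n^{[2]},\ell_n^{\prime[2]})$ with three distinct indices satisfies
\[
m(\mathbf{X_{n-1}})(G_{n-1,N})^4\,\mathbf{E}\bigl[\lambda_{n-1}^b\lambda_{n-1}^{\prime b}\mid\mathscr{G}_{n-1}^N\bigr]\;\leq\;\frac{\norm{G_{n-1,\cdot}}_{\infty}^4}{N^3},
\]
and summing this over the $O(N^3)$ such pairs, together with the normalising prefactor, produces a bound of order
\[
\gamma_{n-1}^N(1)^4\Bigl(\tfrac{N^{n-2}}{(N-1)^{n}}\Bigr)^{2}\sum_{\ell_{n-1}^{[2]},\,\ell_{n-1}^{\prime[2]}}\Lambda_{n-1}^{\ell_{n-1}^{[2]}}\Lambda_{n-1}^{\ell_{n-1}^{\prime[2]}}=\Gamma_{n-1,N}^b(1)^2,
\]
which is $\mathscr{O}_{\mathbf{p}}(1)$ by Proposition \ref{prop:Gamma1}, not $\mathscr{O}_{\mathbf{p}}(1/N)$. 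The individual summands are \emph{not} uniformly bounded because the random factors $\Lambda_{n-1}^{\ell_{n-1}^{[2]}}$ need not be.

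What is missing is the observation that an overlap in $(\ell_n^{[2]},\ell_n^{\prime[2]})$ forces, through the indicator $\lambda_{n-1}^b\lambda_{n-1}^{\prime b}$, an overlap in $(\ell_{n-1}^{[2]},\ell_{n-1}^{\prime[2]})$ as well. The paper uses this constraint propagation to run a separate induction on $n$, establishing
\[
\mathbf{E}\Biggl[\gamma_{n}^N(1)^4\Bigl(\tfrac{N^{n-1}}{(N-1)^{n+1}}\Bigr)^{2}\!\!\sum_{(\ell_n^{[2]},\ell_n^{\prime[2]})\in((N)^2)^{\times 2}\setminus(N)^4}\!\!\Lambda_{n}^{\ell_n^{[2]}}\Lambda_{n}^{\ell_n^{\prime[2]}}\Biggr]=\mathscr{O}\!\left(\frac{1}{N}\right),
\]
with the factor $1/N$ entering only at the base case $n=0$. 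Without this inductive step (or an equivalent device), the argument you outline delivers only $\mathbf{E}[\Delta_N^2]=O(1)$.
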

\begin{proof}
  First, by exploiting the notation defined in \eqref{eq:LambdaElln}, we have
  $$
  \Gamma_{n,N}^b(f\otimes g):=
  \gamma_n^N(1)^2
  \frac{N^{n-1}}{(N-1)^{n+1}}
  \sum_{\ell_n^{[2]}\in (N)^2}
  \Lambda_{n}^{\ell_n^{[2]}} 
  C_{b_n}(f\otimes g)(X_n^{\ell_n^{[2]}}),
  $$
  and \eqref{pf:cond-Gamma} is then a direct consequence of Proposition \ref{lm:LackOfBias} since for any $\ell_n^{[2]}\in (N)^2$
  $$ \mathbf{E}\left[
  \gamma_n^N(1)^2\ 
  \frac{N^{n-1}}{(N-1)^{n+1}}
 \Lambda_{n}^{\ell_n^{[2]}} 
  C_{b_n}(f\otimes g)(X_n^{\ell_n^{[2]}})
  \;\Bigg\vert\;
  \mathscr{G}_{n-1}^N
  \right]
  =
  \frac{1}{N(N-1)}
  \Gamma_{n-1,N}^b Q_{n,N}^{\otimes 2}C_{b_n}(f\otimes g),$$
  where the right-hand side does not depend on $\ell_n^{[2]}$.
  Second, thanks to Chebyshev's inequality, it suffices to verify
  that
  \begin{equation*}
  \mathrm{Var}
  \left[
  \Gamma_{n,N}^b(f\otimes g) 
  -
  \Gamma_{n-1,N}^b Q_{n,N}^{\otimes 2}C_{b_n}(f\otimes g)
  \right]
  =\mathscr{O}\left(\frac{1}{N}\right).
  \end{equation*}
  For this, by \eqref{pf:cond-Gamma}, we just have to show that 
  $$
  \mathbf{E}
  \left[
  \Gamma_{n,N}^b(f\otimes g)^2
  -
  (\Gamma_{n-1,N}^b Q_{n,N}^{\otimes 2}C_{b_n}(f\otimes g))^2
  \right]
  =\mathscr{O}\left(\frac{1}{N}\right).
  $$
  Then, recall that, by definition,
\begin{equation*}
  \begin{split}
  &\Gamma_{n,N}^b(f\otimes g)^2
  \\
  =
  &
  \gamma_n^N(1)^4
  \left(
  \frac{N^{n-1}}{(N-1)^{n+1}}
  \right)^2
  \sum_{(\ell_n^{[2]},\ell_n^{'[2]})\in ((N)^2)^{\times 2}}
  \Lambda_{n}^{\ell_n^{[2]}} 
  \Lambda_{n}^{\ell_n^{'[2]}} 
  \left(
  C_{b_n}(f\otimes g)
\right)^{\otimes 2}(X_n^{\ell_n^{[2]}},X_n^{\ell_n^{'[2]}})
  \\
  =
  &
  \underbrace{
  \gamma_n^N(1)^4
  \left(
  \frac{N^{n-1}}{(N-1)^{n+1}}
  \right)^2
  \sum_{(\ell_n^{[2]},\ell_n^{'[2]})\in (N)^4}
  \Lambda_{n}^{\ell_n^{[2]}} 
  \Lambda_{n}^{\ell_n^{'[2]}} 
  \left(
  C_{b_n}(f\otimes g)
\right)^{\otimes 2}(X_n^{\ell_n^{[2]}},X_n^{\ell_n^{'[2]}})
  }_{R_1(N)}
  \\
  &
  +
  \underbrace{
  \gamma_n^N(1)^4
    \left(
  \frac{N^{n-1}}{(N-1)^{n+1}}
  \right)^2
  \sum_{(\ell_n^{[2]},\ell_n^{'[2]})\in ((N)^2)^{\times 2} \backslash (N)^4}
  \Lambda_{n}^{\ell_n^{[2]}} 
  \Lambda_{n}^{\ell_n^{'[2]}} 
  \left(
  C_{b_n}(f\otimes g)
\right)^{\otimes 2}(X_n^{\ell_n^{[2]}},X_n^{\ell_n^{'[2]}})
  }_{R_2(N)}.
  \end{split}
\end{equation*}

\begin{itemize}
  \item
    For $R_1(N)$, our goal is to establish that
    \begin{equation*}\label{pf:R1N}
    \mathbf{E}
    \left[
      R_1(N)
      -
      (\Gamma_{n-1,N}^b Q_{n,N}^{\otimes 2}C_{b_n}(f\otimes g))^2
    \right]
    =\mathscr{O}\left(\frac{1}{N}\right).
    \end{equation*}
    In fact, for any $(\ell_{n}^{[2]},\ell_{n}^{'[2]})\in (N)^4$, 
    $$
    (A_n^{\ell_n^1}, X_n^{\ell_n^1},
    A_n^{\ell_n^2},X_{n}^{\ell_n^2})
    \qquad
    \text{and}
    \qquad
    (A_n^{\ell_n^{'1}},X_n^{\ell_n^{'1}},
    A_n^{\ell_n^{'2}},X_n^{\ell_n^{'2}})
    $$
    are conditionally
    independent given $\mathscr{G}_{n-1}^N$ by construction of the IPS.
    Hence, by applying Proposition \ref{lm:LackOfBias} respectively for $\ell_n^{[2]}$ and
     for $\ell_n^{'[2]}$, we have
    \begin{multline*}
    \mathbf{E}
    \left[
    \gamma_n^N(1)^4
    \left(
    \frac{N^{n-1}}{(N-1)^{n+1}}
    \right)^2
    \Lambda_{n}^{\ell_n^{[2]}} 
    \Lambda_{n}^{\ell_n^{'[2]}} 
    C_{b_n}(f\otimes g)(X_n^{\ell_n^{[2]}})
    C_{b_n}(f\otimes g)(X_n^{\ell_n^{'[2]}})
    \;\Bigg\vert\;
    \mathscr{G}_{n-1}^N
    \right]
    \\
    =
    \frac{1}{N^2(N-1)^2}
   ( \Gamma_{n-1}^N Q_{n,N}^{\otimes 2}C_{b_n}(f\otimes g))^2.
    \end{multline*}
    Then, since 
    $$\#\left(
        (N)^4
      \right)
      =N(N-1)(N-2)(N-3),$$
    we deduce that
    \begin{multline*}
      \mathbf{E}\left[
    R_1(N)
    -
    (\Gamma_{n-1,N}^b Q_{n,N}^{\otimes 2}C_{b_n}(f\otimes g))^2
    \;\bigg\vert\;
    \mathscr{G}_{n-1}^N
  \right]
    \\
    =
    \left(
    \frac{N(N-1)(N-2)(N-3)}{N^2(N-1)^2}
    -1
    \right)
    (\Gamma_{n-1}^N Q_{n,N}^{\otimes 2}C_{b_n}(f\otimes g))^2,
    \end{multline*}
    and
    \begin{multline*} 
 \mathbf{E}\left[
    R_1(N)
    -
    (\Gamma_{n-1,N}^b Q_{n,N}^{\otimes 2}C_{b_n}(f\otimes g))^2 
    \right]
       \\
    =
    \left(
    \frac{N(N-1)(N-2)(N-3)}{N^2(N-1)^2}
    -1
    \right)
    \mathbf{E}\left[(\Gamma_{n-1}^N Q_{n,N}^{\otimes 2}C_{b_n}(f\otimes g))^2
    \right]
    =\mathscr{O}\left(\frac{1}{N}\right),
    \end{multline*}
    where the final equality is due to Proposition \ref{prop:Gamma1}, taking into account that $f$ and $g$ are bounded, and so is $G_{n-1,N}$ uniformly with respect to $N$ by $\mathcal{A}$\ref{A1}.
  \item
    For $R_2(N)$, the nonnegativity of $\Lambda_n^{\ell_n^{[2]}}$ implies 
    $$
    \mathbf{E}[R_2(N)]
    \leq
    \mathbf{E}
    \left[
    \gamma_n^N(1)^4
    \left(
    \frac{N^{n-1}}{(N-1)^{n+1}}
    \right)^2
    \sum_{(\ell_n^{[2]},\ell_n^{'[2]})\in ((N)^2)^{\times 2} \backslash (N)^4}
    \Lambda_{n}^{\ell_n^{[2]}}
    \Lambda_{n}^{\ell_n^{'[2]}}
    \right]
    \norm{f}_{\infty}^2
    \norm{g}_{\infty}^2.
    $$
So the proof will be finished once we have shown that
  $$
  \mathbf{E}
  \left[
  \gamma_n^N(1)^4
  \left(
  \frac{N^{n-1}}{(N-1)^{n+1}}
  \right)^2
  \sum_{(\ell_n^{[2]},\ell_n^{'[2]})\in ((N)^2)^{\times 2} \backslash (N)^4}
  \Lambda_{n}^{\ell_n^{[2]}}
  \Lambda_{n}^{\ell_n^{'[2]}}
  \right]
  =\mathscr{O}\left(\frac{1}{N}\right).
  $$
Once again, we proceed by induction. At step 0, we have
  $$
  \frac{1}{N^2(N-1)^2}
  \sum_{(\ell_0^{[2]},\ell_0^{'[2]})\in ((N)^2)^{\times 2} \backslash (N)^4}
  1
  =
  1
  -
  \frac{N(N-1)(N-2)(N-3)}{N^2(N-1)^2}
  =\mathscr{O}\left(\frac{1}{N}\right).
  $$
  For step $n\geq 1$, we suppose that
  $$
  \mathbf{E}
  \left[
    \gamma_{n-1}^N(1)^4
  \left(
  \frac{N^{n-2}}{(N-1)^{n}}
  \right)^2
  \sum_{(\ell_{n-1}^{[2]},\ell_{n-1}^{'[2]})\in ((N)^2)^{\times 2} \backslash (N)^4}
  \Lambda_{n-1}^{\ell_{n-1}^{[2]}}
  \Lambda_{n-1}^{\ell_{n-1}^{'[2]}}
  \right]
  =\mathscr{O}\left(\frac{1}{N}\right).
  $$
  The adaptation of (\ref{pf:Lambda-decomp}) to the present context gives
  \begin{multline*}
  \mathbf{E}
  \left[
    \gamma_{n}^N(1)^4
  \left(
  \frac{N^{n-1}}{(N-1)^{n+1}}
  \right)^2
  \sum_{(\ell_{n}^{[2]},\ell_{n}^{'[2]})\in ((N)^2)^{\times 2} \backslash (N)^4}
  \Lambda_{n}^{\ell_{n}^{[2]}}
  \Lambda_{n}^{\ell_{n}^{'[2]}}
  \;\Bigg\vert\;
  \mathscr{G}_{n-1}^N
  \right]
  \\
  =
  \gamma_{n-1}^N(1)^4
  \left(
  \frac{N^{n-1}}{(N-1)^{n+1}}
  \right)^2
  \sum_{(\ell_{n-1}^{[2]},\ell_{n-1}^{'[2]}) \in ((N)^2)^{\times 2}}
  \Lambda_{n-1}^{\ell_{n-1}^{[2]}}
  \Lambda_{n-1}^{\ell_{n-1}^{'[2]}}
  \\
  \sum_{(\ell_{n}^{[2]},\ell_{n}^{'[2]})\in ((N)^2)^{\times 2} \backslash (N)^4}
  m(\mathbf{X_{n-1}})(G_{n-1,N})^4\ 
  \\
  \mathbf{E}\left[
    \lambda_{n-1}^b(A_{n-1}^{\ell_n^{[2]}},\ell_{n-1}^{[2]})
    \lambda_{n-1}^b(A_{n-1}^{\ell_n^{'[2]}},\ell_{n-1}^{'[2]})
    \;\Bigg\vert\;\mathscr{G}_{n-1}^N
  \right].
  \end{multline*}
  Now, for $N\geq4$, it is clear that
  $$
  ((N)^2)^{\times 2} \backslash (N)^4
  =
  \left(
  ((N)^2)^{\times 2} 
  \cap [N]_2^4
  \right)
  \cup
  \left(
  ((N)^2)^{\times 2} 
  \cap [N]_3^4
  \right).
  $$
  \begin{itemize}
    \item Case 1: 
      $
      (\ell_n^{[2]},\ell_n^{'[2]})  \in
      ((N)^2)^{\times 2} 
      \cap [N]_2^4.
      $

      By definition of $S_{n-1,N}$ in \eqref{eq:selection}, 
      \begin{equation*}
        \begin{split}
        &
        \mathbf{E}\left[
          \lambda_{n-1}^b(A_{n-1}^{\ell_n^{[2]}},\ell_{n-1}^{[2]})
          \lambda_{n-1}^b(A_{n-1}^{\ell_n^{'[2]}},\ell_{n-1}^{'[2]})
          \;\Bigg\vert\;\mathscr{G}_{n-1}^N
        \right]
        \\
        \leq
        &
        \left(
          \frac{\norm{G_{n-1,\cdot}}_{\infty}}{N\ m(\mathbf{X_{n-1}})(G_{n-1,N})}
        \right)^2
        \left(
        \mathbf{1}_{\{b_{n-1}=1,\ell_{n-1}^1 = \ell_{n-1}^{'1}\}}
        +
        \mathbf{1}_{\{b_{n-1}=0,\ell_{n-1}^1 = \ell_{n-1}^{'1},\ell_{n-1}^2 = \ell_{n-1}^{'2}\}}
        \right)
        \\
        \leq
        &
        \left(
          \frac{\norm{G_{n-1,\cdot}}_{\infty}}{N\ m(\mathbf{X_{n-1}})(G_{n-1,N})}
        \right)^2
          \mathbf{1}_{\left\{\#\{\ell_{n-1}^{1},\ell_{n-1}^{'1},\ell_{n-1}^{2},\ell_{n-1}^{'2}\}
          <4\right\}},
        \end{split}
      \end{equation*}
      and since
      $$
      \#\left(
      ((N)^2)^{\times 2} 
      \cap [N]_2^4
      \right)
      =2N(N-1),
      $$
      it comes
      \begin{multline*}
      \sum_{
      (\ell_n^{[2]},\ell_n^{'[2]})  \in
      ((N)^2)^{\times 2} 
      \cap [N]_2^4
      }
      m(\mathbf{X_{n-1}})(G_{n-1,N})^4
      \\
      \mathbf{E}\left[
        \lambda_{n-1}^b(A_{n-1}^{\ell_n^{[2]}},\ell_{n-1}^{[2]})
        \lambda_{n-1}^b(A_{n-1}^{\ell_n^{'[2]}},\ell_{n-1}^{'[2]})
        \;\Bigg\vert\;\mathscr{G}_{n-1}^N
      \right]
      \\
      \leq
      \frac{2N(N-1)}{N^2}  \norm{G_{n-1,\cdot}}_{\infty}^4
          \mathbf{1}_{\left\{\#\{\ell_{n-1}^{1},\ell_{n-1}^{'1},\ell_{n-1}^{2},\ell_{n-1}^{'2}\}
          <4\right\}}.
      \end{multline*}
    \item Case 2: 
      $
      (\ell_n^{[2]},\ell_n^{'[2]})  \in
      ((N)^2)^{\times 2} 
      \cap [N]_3^4.
      $

      First, we suppose that $\ell_n^1 = \ell_n^{'1}$. As for the previous
      case, we have
      \begin{equation*}
        \begin{split}
        &
        \mathbf{E}\left[
          \lambda_{n-1}^b(A_{n-1}^{\ell_n^{[2]}},\ell_{n-1}^{[2]})
          \lambda_{n-1}^b(A_{n-1}^{\ell_n^{'[2]}},\ell_{n-1}^{'[2]})
          \;\Bigg\vert\;\mathscr{G}_{n-1}^N
        \right]
        \\
        \leq
        &
        \left(
          \frac{\norm{G_{n-1,\cdot}}_{\infty}}{N\ m(\mathbf{X_{n-1}})(G_{n-1,N})}
        \right)^3
        \left(
        \mathbf{1}_{\{b_{n-1}=1,\ell_{n-1}^1 = \ell_{n-1}^{'1}\}}
        +
        \mathbf{1}_{\{b_{n-1}=0,\ell_{n-1}^1 = \ell_{n-1}^{'1}\}}
        \right)
        \\
        \leq
        &
        \left(
          \frac{\norm{G_{n-1,\cdot}}_{\infty}}{N\ m(\mathbf{X_{n-1}})(G_{n-1,N})}
        \right)^3
          \mathbf{1}_{\left\{\#\{\ell_{n-1}^{1},\ell_{n-1}^{'1},\ell_{n-1}^{2},\ell_{n-1}^{'2}\}
          <4\right\}}.
        \end{split}
      \end{equation*}
      By the same reasoning, for $\ell_n^1 = \ell_n^{'2}$, 
      $\ell_n^2 = \ell_n^{'1}$ and $\ell_n^2 = \ell_n^{'2}$, we also have 
      \begin{equation*}
        \begin{split}
        &
        \mathbf{E}\left[
          \lambda_{n-1}^b(A_{n-1}^{\ell_n^{[2]}},\ell_{n-1}^{[2]})
          \lambda_{n-1}^b(A_{n-1}^{\ell_n^{'[2]}},\ell_{n-1}^{'[2]})
          \;\Bigg\vert\;\mathscr{G}_{n-1}^N
        \right]
        \\
        \leq
        &
        \left(
          \frac{\norm{G_{n-1,\cdot}}_{\infty}}{N\ m(\mathbf{X_{n-1}})(G_{n-1,N})}
        \right)^3
          \mathbf{1}_{\left\{\#\{\ell_{n-1}^{1},\ell_{n-1}^{'1},\ell_{n-1}^{2},\ell_{n-1}^{'2}\}
          <4\right\}}.
        \end{split}
      \end{equation*}
      In addition, since
      $$
      \#\left(
      ((N)^2)^{\times 2} 
      \cap [N]_3^4
      \right)
      =4N(N-1)(N-2),
      $$
      we get this time
      \begin{multline*}
      \sum_{
      (\ell_n^{[2]},\ell_n^{'[2]})  \in
      ((N)^2)^{\times 2} 
      \cap [N]_3^4
      }
      m(\mathbf{X_{n-1}})(G_{n-1,N})^4
      \\
      \mathbf{E}\left[
        \lambda_{n-1}^b(A_{n-1}^{\ell_n^{[2]}},\ell_{n-1}^{[2]})
        \lambda_{n-1}^b(A_{n-1}^{\ell_n^{'[2]}},\ell_{n-1}^{'[2]})
        \;\Bigg\vert\;\mathscr{G}_{n-1}^N
      \right]
      \\
      \leq
      \frac{4N(N-1)(N-2)}{N^3}\norm{G_{n-1,\cdot}}_{\infty}^4
        \mathbf{1}_{\left\{\#\{\ell_{n-1}^{1},\ell_{n-1}^{'1},\ell_{n-1}^{2},\ell_{n-1}^{'2}\}
        <4\right\}}.
      \end{multline*}
  \end{itemize}
  By gathering both cases, we have
  \begin{equation*}
  \begin{split}
    &
  \mathbf{E}
  \left[
    \gamma_{n}^N(1)^4
  \left(
  \frac{N^{n-1}}{(N-1)^{n+1}}
  \right)^2
  \sum_{(\ell_{n}^{[2]},\ell_{n}^{'[2]})\in ((N)^2)^{\times 2} \backslash (N)^4}
  \Lambda_{n}^{\ell_{n}^{[2]}}
  \Lambda_{n}^{\ell_{n}^{'[2]}}
  \;\Bigg\vert\;
  \mathscr{G}_{n-1}^N
  \right]
  \\
  \leq
    &
    \gamma_{n-1}^N(1)^4
  \left(
  \frac{N^{n-2}}{(N-1)^{n}}
  \right)^2
        \frac{N^2}{(N-1)^2}
  \left(
  \frac{2N(N-1)}{N^2}  +
  \frac{4N(N-1)(N-2)}{N^3}
  \right) 
 \norm{G_{n-1,\cdot}}_{\infty}^4
  \\
    &
     \sum_{(\ell_{n-1}^{[2]},\ell_{n-1}^{'[2]}) \in ((N)^2)^{\times 2}}
  \Lambda_{n-1}^{\ell_{n-1}^{[2]}}
  \Lambda_{n-1}^{\ell_{n-1}^{'[2]}}
  \mathbf{1}_{\left\{\#\{\ell_{n-1}^{1},\ell_{n-1}^{'1},\ell_{n-1}^{2},\ell_{n-1}^{'2}\}
  <4\right\}}
  \\
  \leq
    &
  6  \frac{N^2}{(N-1)^2}
  \norm{G_{n-1,\cdot}}_{\infty}^4
    \gamma_{n-1}^N(1)^4
  \left(
  \frac{N^{n-2}}{(N-1)^{n}}
  \right)^2
  \sum_{(\ell_{n-1}^{[2]},\ell_{n-1}^{'[2]}) \in ((N)^2)^{\times 2}\backslash (N)^4}
  \Lambda_{n-1}^{\ell_{n-1}^{[2]}}
  \Lambda_{n-1}^{\ell_{n-1}^{'[2]}}.
  \end{split}
  \end{equation*}
  The conclusion follows from the induction hypothesis by taking the expectation on both sides. 
 \end{itemize}
 This ends the proof of Lemma \ref{lm:cvg-proba-Gamma}.
\end{proof}

\begin{lemma}\label{lm:cvg-Gamma-Q}
  Assume $\mathcal{A}$\ref{A1}-$\mathcal{A}$\ref{A2}, then
  for any test functions $f,g\in\mathcal{B}_b(E_n)$ and for all $n\geq
  1$, 
  $$
    \Gamma_{n-1,N}^b Q_{n,N}^{\otimes 2}(f\otimes g)
  -
  \Gamma_{n-1,N}^b Q_{n}^{\otimes 2}(f\otimes g)
  =\mathscr{O}_{\mathbf{p}}\left(\frac{1}{\sqrt{N}}\right).
  $$
\end{lemma}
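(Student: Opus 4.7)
The plan is to reduce the lemma to three ingredients already established in the paper: the linearization of $Q_{n,z}-Q_n$ provided by $\mathcal{A}$\ref{A2}, the uniform tightness of $\Gamma_{n-1,N}^b(1)$ from Proposition \ref{prop:Gamma1}, and the $\mathscr{O}_{\mathbf{p}}(1/\sqrt{N})$-convergence of $Z_{n-1}^N$ to $z_{n-1}^*$ coming from Theorem \ref{thm-old:clt} applied to the bounded summary statistics $\zeta_{n-1}^k$.

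First, I would split the difference via the elementary tensor identity
$$
Q_{n,N}^{\otimes 2}(f\otimes g) - Q_n^{\otimes 2}(f\otimes g)
=(Q_{n,N}(f)-Q_n(f))\otimes Q_{n,N}(g)
+Q_n(f)\otimes(Q_{n,N}(g)-Q_n(g)).
$$
Integrating against the nonnegative kernel $\Gamma_{n-1,N}^b$ and using the triangle inequality, the problem reduces to controlling each of the two resulting terms by $\mathscr{O}_{\mathbf{p}}(1/\sqrt{N})$.

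Second, by $\mathcal{A}$\ref{A2} applied to the bounded test functions $f$ and $g$, there exist bounded measurable functions $h_{n-1}^f$ and $h_{n-1}^g$ such that, uniformly in $x\in E_{n-1}$,
$$
|Q_{n,N}(f)(x)-Q_n(f)(x)|\leq \|h_{n-1}^f\|_\infty\,|Z_{n-1}^N-z_{n-1}^*|,
$$
and similarly for $g$. Combining with $\mathcal{A}$\ref{A1}, which guarantees $\|Q_{n,N}(g)\|_\infty\leq \|G_{n-1,\cdot}\|_\infty\|g\|_\infty$ and analogously $\|Q_n(f)\|_\infty\leq \|G_{n-1,\cdot}\|_\infty\|f\|_\infty$, we obtain the deterministic bound
$$
\bigl|\Gamma_{n-1,N}^b(Q_{n,N}^{\otimes 2}(f\otimes g))-\Gamma_{n-1,N}^b(Q_n^{\otimes 2}(f\otimes g))\bigr|
\leq c_n\,\Gamma_{n-1,N}^b(1)\,|Z_{n-1}^N-z_{n-1}^*|,
$$
where $c_n:=\|G_{n-1,\cdot}\|_\infty(\|h_{n-1}^f\|_\infty\|g\|_\infty+\|h_{n-1}^g\|_\infty\|f\|_\infty)$ is a finite deterministic constant.

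Third, Proposition \ref{prop:Gamma1} asserts that $\Gamma_{n-1,N}^b(1)$ is bounded in $\mathbb{L}^{\mathrm{2}}$, hence uniformly tight; while Theorem \ref{thm-old:clt} applied coordinatewise to $\zeta_{n-1}^k\in\mathcal{B}_b(E_{n-1})$ and the identity $Z_{n-1}^N-z_{n-1}^*=\eta_{n-1}^N(\zeta_{n-1})-\eta_{n-1}(\zeta_{n-1})$ yields $|Z_{n-1}^N-z_{n-1}^*|=\mathscr{O}_{\mathbf{p}}(1/\sqrt{N})$. The conclusion then follows by Slutsky's lemma, since the product of a uniformly tight sequence and an $\mathscr{O}_{\mathbf{p}}(1/\sqrt{N})$ sequence is itself $\mathscr{O}_{\mathbf{p}}(1/\sqrt{N})$. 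There is no serious obstacle: the linearization in $\mathcal{A}$\ref{A2} is designed precisely to convert differences of adaptive kernels into $\sqrt{N}$-rate errors, and all the probabilistic heavy lifting has already been done in Proposition \ref{prop:Gamma1} and Theorem \ref{thm-old:clt}.
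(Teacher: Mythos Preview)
Your proof is correct and essentially identical to the paper's own argument: both telescope $Q_{n,N}^{\otimes 2}-Q_n^{\otimes 2}$ into two single-factor differences (the paper pivots through $Q_{n,N}\otimes Q_n$ while you pivot through $Q_n\otimes Q_{n,N}$, which is cosmetically different but mathematically equivalent), bound each via $\mathcal{A}$\ref{A2} and $\mathcal{A}$\ref{A1}, and conclude using the tightness of $\Gamma_{n-1,N}^b(1)$ from Proposition \ref{prop:Gamma1} together with $|Z_{n-1}^N-z_{n-1}^*|=\mathscr{O}_{\mathbf{p}}(1/\sqrt{N})$, which the paper records as \eqref{pf:ZN}.
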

\begin{proof}
The verification 
shares some resemblance with the convergence of $P_2(N)$ in the proof
of Theorem \ref{thm:consistency-gamma}. Specifically, we start with the following
decomposition:
\begin{multline*}
   \left| \Gamma_{n-1,N}^b Q_{n,N}^{\otimes 2}(f\otimes g)
  -
  \Gamma_{n-1,N}^b Q_{n}^{\otimes 2}(f\otimes g)\right|
  \\
  \leq
  \underbrace{
    \left| \Gamma_{n-1,N}^b Q_{n,N}^{\otimes 2}(f\otimes g)
  -
    \Gamma_{n-1,N}^b (Q_{n,N}\otimes Q_{n})(f\otimes g)\right|
}_{D_1(N)}
  \\
  +
  \underbrace{
    \left| \Gamma_{n-1,N}^b (Q_{n,N}\otimes Q_{n})(f\otimes g)
  -
  \Gamma_{n-1,N}^b Q_{n}^{\otimes 2}(f\otimes g)\right|
}_{D_2(N)}.
\end{multline*}
For $D_1(N)$, we may write
$$(Q_{n,N}^{\otimes 2}(f\otimes g)-(Q_{n,N}\otimes Q_{n})(f\otimes g))(x,y)=Q_{n,N}(f)(x)(Q_{n,N}(g)(y)-Q_{n}(g)(y)).$$
By $\mathcal{A}$\ref{A2}, for any $g\in \mathcal{B}_b(E_n)$,
there exists a bounded function $h_{n-1}$ such that
$$
\left|Q_{n,N}(g)(y) - Q_n(g)(y)\right| 
=
\left|\left\langle h_{n-1}(y,Z_{n-1}^N),Z_{n-1}^N-z_{n-1}^*\right\rangle\right|
\leq 
\norm{h_{n-1}}_{\infty}
\abs{Z_{n-1}^N-z_{n-1}^*}.
$$
Since, in addition,
$$|Q_{n,N}(f)(x)|\leq \norm{G_{n-1,\cdot}}_{\infty}\norm{f}_{\infty},$$
it comes
$$
D_1(N)
\leq 
\Gamma_{n-1,N}^b(1) \norm{G_{n-1,\cdot}}_{\infty}\norm{f}_{\infty}
\norm{h_{n-1}}_{\infty}
\abs{Z_{n-1}^N-z_{n-1}^*}.$$
By Proposition \ref{prop:Gamma1}, one has 
$$
\Gamma_{n-1,N}^b(1)
=\mathscr{O}_{\mathbf{p}}\left(1\right).
$$
In addition, a by-product \eqref{pf:ZN} of Theorem \ref{thm-old:clt} is that
$$
\abs{Z_{n-1}^N-z_{n-1}^*}
=
\mathscr{O}_{\mathbf{p}}\left(\frac{1}{\sqrt{N}}\right).
$$
Hence, one concludes that
$$
D_1(N)
=
\mathscr{O}_{\mathbf{p}}\left(\frac{1}{\sqrt{N}}\right).
$$
The reasoning for $D_2(N)$ is the same.
\end{proof}

\subsection{Connection between the estimators}\label{zicj}
In this section, we give some combinatorial results on the coalescent
tree-based measures $\Gamma_{n,N}^b$. In particular, they allow us to
connect the variance estimator (\ref{ljsncn}) of Lee \& Whiteley and our term by term
estimators. As mentioned before, these relations do not depend on
$\mathcal{A}$\ref{A2}: they are  provided by the structure of the IPS and the
underlying multinomial selection scheme. In this respect, recall that, under $\mathcal{A}$\ref{A1}, Equation
(\ref{eq:selection}) is always well-defined for the denominator is always
strictly positive, and the same holds true for the IPS itself. This is in fact the only condition required here.

\begin{proposition}\label{prop:decomposition-fixed}
Provided that the IPS is well-defined, then for any test function $F \in \mathcal{B}_b(E_n^2)$,
  we have the decompositions:
$$
(\gamma_n^N)^{\otimes 2}(F)
=
\sum_{b\in \{0,1\}^{n+1}}
\left\{
\prod_{p=0}^n
\frac{(N-1)^{1-b_p}}{N}
\right\}
\Gamma_{n,N}^b(F),
$$
and 
$$
(\eta_n^N)^{\otimes 2}(F)
=
\sum_{b\in \{0,1\}^{n+1}}
\left\{
\prod_{p=0}^n
\frac{(N-1)^{1-b_p}}{N}
\right\}
\bar{\Gamma}_{n,N}^b(F).
$$
\end{proposition}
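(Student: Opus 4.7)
The plan is to prove the two identities in tandem: since $(\gamma_n^N)^{\otimes 2}(F) = \gamma_n^N(1)^2 (\eta_n^N)^{\otimes 2}(F)$ and $\Gamma_{n,N}^b = \gamma_n^N(1)^2 \bar{\Gamma}_{n,N}^b$ by construction, the two decompositions are equivalent, so it suffices to establish the one for $(\eta_n^N)^{\otimes 2}$. The strategy is to expand the right-hand side using Definition \ref{def:coal-estimator}, aggregate the numerical prefactors, and match the outcome with the trivial identity $(\eta_n^N)^{\otimes 2}(F) = \frac{1}{N^2}\sum_{i,j \in [N]^2} F(X_n^i,X_n^j)$ through a combinatorial counting argument.

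First I would gather the prefactors. Writing $|b| := \sum_{p=0}^n b_p$, a direct computation yields
\[
\prod_{p=0}^n \frac{(N-1)^{1-b_p}}{N} \cdot \frac{N^{n-1}}{(N-1)^{n+1}} = \frac{1}{N^2 (N-1)^{|b|}}.
\]
Substituting the definition of $\bar{\Gamma}_{n,N}^b$ and splitting off the level-$n$ index, the right-hand side of the claimed identity becomes
\[
\frac{1}{N^2} \sum_{\ell_n^{[2]} \in (N)^2} \sum_{b_n \in \{0,1\}} \frac{C_{b_n}(F)(X_n^{\ell_n^{[2]}})}{(N-1)^{b_n}} \cdot K_n(\ell_n^{[2]}),
\]
where, with the convention $K_0 \equiv 1$,
\[
K_n(\ell_n^{[2]}) := \sum_{b_{0:n-1}} \sum_{\ell_{0:n-1}^{[2]}} \frac{1}{(N-1)^{|b_{0:n-1}|}} \prod_{p=0}^{n-1} \lambda_p^b\bigl(A_p^{\ell_{p+1}^{[2]}}, \ell_p^{[2]}\bigr).
\]

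The core of the argument is the claim $K_n(\ell_n^{[2]}) = 1$ for every $\ell_n^{[2]} \in (N)^2$, which I would prove by induction on $n$. Fixing $\ell_n^{[2]}$ and setting $\tilde{a} := A_{n-1}^{\ell_n^{[2]}}$, the structure of $\lambda_{n-1}^b$ forces a dichotomy. Either $\tilde{a}^1 \neq \tilde{a}^2$, in which case only $b_{n-1} = 0$ contributes and $\ell_{n-1}^{[2]} = \tilde{a}$ is uniquely determined, so $K_n(\ell_n^{[2]})$ collapses to $K_{n-1}(\tilde{a}) = 1$ by induction. Or $\tilde{a}^1 = \tilde{a}^2$, in which case only $b_{n-1} = 1$ contributes, $\ell_{n-1}^1$ is forced to equal $\tilde{a}^1$, and $\ell_{n-1}^2$ ranges freely over the $N-1$ indices distinct from $\ell_{n-1}^1$; the factor $1/(N-1)$ is then canceled exactly by this $(N-1)$-fold summation, so each of these terms reduces to $K_{n-1}(\tilde{a}^1,\ell_{n-1}^2) = 1$ by induction, and $K_n(\ell_n^{[2]}) = 1$ again. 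This balance between the $(N-1)$-fold freedom granted at each coalescence event and the compensating $(N-1)^{-b_p}$ weighting is the main substantive point of the proof.

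Once $K_n \equiv 1$ is granted, the conclusion is routine. The $b_n = 0$ piece of the outer sum evaluates to $\frac{1}{N^2}\sum_{\ell_n^1 \neq \ell_n^2} F(X_n^{\ell_n^1}, X_n^{\ell_n^2}) = \frac{1}{N^2}\sum_{i \neq j} F(X_n^i, X_n^j)$, while the $b_n = 1$ piece, after performing the trivial sum of $1/(N-1)$ over $N-1$ values of $\ell_n^2$, yields $\frac{1}{N^2}\sum_{i} F(X_n^i, X_n^i)$. Adding the two recovers $\frac{1}{N^2}\sum_{i,j \in [N]^2} F(X_n^i, X_n^j) = (\eta_n^N)^{\otimes 2}(F)$, proving the $\eta$-identity and hence both identities.
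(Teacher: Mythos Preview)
Your proof is correct and follows essentially the same approach as the paper's. Both arguments expand the right-hand side, collect the numerical prefactors, and reduce the problem to a level-by-level cancellation based on whether the two parent indices $A_{n-1}^{\ell_n^1}$ and $A_{n-1}^{\ell_n^2}$ coincide; the paper states the resulting identity $\sum_{\ell_{0:n-1}^{[2]}}\prod_{p=0}^{n-1}(\cdots)=\left(\tfrac{N-1}{N}\right)^n$ as a one-line observation, whereas you package the same dichotomy into the inductive claim $K_n\equiv 1$, and both finish by recombining the $b_n=0$ and $b_n=1$ pieces via \eqref{lasjcn}.
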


\begin{proof}
Since
$$ \bar{\Gamma}_{n,N}^b(F):=
  \frac{N^{n-1}}{(N-1)^{n+1}}
  \sum_{\ell_{0:n}^{[2]}\in \left((N)^2\right)^{\times(n+1)}}
  \left\{\prod_{p=0}^{n-1}
  \lambda_p^b(A_p^{\ell_{p+1}^{[2]}},\ell_p^{[2]})
  \right\}
  C_{b_n}(F)(X_n^{\ell_n^{[2]}}),$$
we have 
\begin{align*}
&\sum_{b\in \{0,1\}^{n+1}}
\left\{\prod_{p=0}^n
\frac{(N-1)^{1-b_p}}{N}\right\}
\bar{\Gamma}_{n,N}^b(F)
\\
&=
\frac{N^{n-1}}{(N-1)^{n+1}}
\sum_{b\in \{0,1\}^{n+1}}
\sum_{\ell_{0:n}^{[2]}\in \left((N)^2\right)^{\times(n+1)}}
\left\{\prod_{p=0}^n
\frac{(N-1)^{1-b_p}}{N}\right\}
\left\{\prod_{p=0}^{n-1}\lambda_p^b(A_p^{\ell_{p+1}^{[2]}},\ell_p^{[2]})\right\}
C_{b_n}(F)(X_n^{\ell_n^{[2]}}).
\end{align*}
Enumerating all the possibilities for the coalescence indicator $b\in \{0,1\}^{n+1}$ leads to
\begin{align*}
&\sum_{b\in \{0,1\}^{n+1}}
\left\{\prod_{p=0}^n
\frac{(N-1)^{1-b_p}}{N}\right\}
\bar{\Gamma}_{n,N}^b(F)
\\
=&
\sum_{\ell_0^{[2]}\in (N)^2}
\cdots
\sum_{\ell_{n-1}^{[2]}\in (N)^2}
\left\{
\prod_{p=0}^{n-1}
\left(\frac{1}{N}
    \mathbf{1}_{\{A_p^{\ell_{p+1}^1}
    =A_p^{\ell_{p+1}^2} = \ell_p^1\neq\ell_p^2\}
    }
     +
    \frac{N-1}{N} \mathbf{1}_{\{A_p^{\ell_{p+1}^1}
    =\ell_p^1 \neq A_p^{\ell_{p+1}^2} = \ell_p^2\}
    }
  \right)
\right\}
\\
&
\left(
\frac{N}{N-1}
\right)^n
\left\{
  \frac{N-1}{N}
  m^{\odot 2}(\mathbf{X_n})C_0(F)
  +
  \frac{1}{N}
  m^{\odot 2}(\mathbf{X_n})C_1(F)
\right\}.
\end{align*}
To conclude, one just has to observe that
$$\sum_{\ell_0^{[2]}\in (N)^2}
\cdots
\sum_{\ell_{n-1}^{[2]}\in (N)^2}
\left\{
\prod_{p=0}^{n-1}
\left(\frac{1}{N}
    \mathbf{1}_{\{A_p^{\ell_{p+1}^1}
    =A_p^{\ell_{p+1}^2} = \ell_p^1\neq\ell_p^2\}
    }
     +
    \frac{N-1}{N} \mathbf{1}_{\{A_p^{\ell_{p+1}^1}
    =\ell_p^1 \neq A_p^{\ell_{p+1}^2} = \ell_p^2\}
    }
  \right)
\right\}=\left(
    \frac{N-1}{N}
\right)^n,$$
while, by (\ref{lasjcn}),
$$ \frac{N-1}{N}
  m^{\odot 2}(\mathbf{X_n})C_0(F)
  +
  \frac{1}{N}
  m^{\odot 2}(\mathbf{X_n})C_1(F)=m^{\otimes 2}(\mathbf{X_n})(F)=(\eta_n^N)^{\otimes 2}(F).$$
Multiplying both sides by $\gamma_n^N(1)^2$ gives the corresponding relation for $(\gamma_n^N)^{\otimes 2}(F)$.
\end{proof}

We can now proceed with the proof of Proposition \ref{lm:convergence-proba}. Recall that the goal is to show that 
$$
  N V_n^N(f)
  -
 \sigma^2_{\eta_{n,N}}(f) 
  =N V_n^N(f)
  -
  \sum_{p = 0}^n
  \left\{ \bar{\Gamma}_{n,N}^{(p)}(f^{\otimes 2}) -
  \bar{\Gamma}_{n,N}^{(\varnothing)}(f^{\otimes 2})\right\} 
  =\mathscr{O}_{\mathbf{p}}\left(\frac{1}{N}\right),
  $$
  and
  $$
  N V_n^N(f-\eta_n^N(f))
  -
  \sum_{p = 0}^n
  \left\{ \bar{\Gamma}_{n,N}^{(p)}
    \left(
      \left[f-\eta_n^N(f)
      \right]^{\otimes 2}
      \right) -
  \bar{\Gamma}_{n,N}^{(\varnothing)}
    \left(
      \left[f-\eta_n^N(f)
      \right]^{\otimes 2}
      \right) 
    \right\}
  =
  \mathscr{O}_{\mathbf{p}}\left(\frac{1}{N}\right).
$$
By construction, we have
$$
V_n^N(f) =
\eta_n^N(f)^2 - \bar\Gamma_{n,N}^{(\varnothing)}(f^{\otimes 2})=(\eta_n^N)^{\otimes 2}(f^{\otimes 2})- \bar\Gamma_{n,N}^{(\varnothing)}(f^{\otimes 2}).
$$
An implication of Proposition \ref{prop:Gamma1} is that, for any test function $f$ and any  coalescence indicator $b$, 
$$\bar{\Gamma}_{n,N}^{b}(f^{\otimes 2})=\mathscr{O}_{\mathbf{p}}\left(1\right).$$
Thus, a consequence of Proposition \ref{prop:decomposition-fixed} is
$$(\eta_n^N)^{\otimes 2}(f^{\otimes 2})=\left(\frac{N-1}{N}\right)^{n+1}\bar\Gamma_{n,N}^{(\varnothing)}(f^{\otimes 2})+\frac{1}{N}\left(\frac{N-1}{N}\right)^{n}\sum_{p=0}^{n}\bar{\Gamma}_{n,N}^{(p)}(f^{\otimes 2})+\mathscr{O}_{\mathbf{p}}\left(\frac{1}{N^2}\right).$$
The desired formula is then obtained by remarking that
$$\left(\frac{N-1}{N}\right)^{n}=1-\mathscr{O}\left(\frac{1}{N}\right)\quad\mbox{and}\quad\left(\frac{N-1}{N}\right)^{n+1}-1=-\frac{n+1}{N}+\mathscr{O}\left(\frac{1}{N^2}\right).$$
Similarly, since
$$
\bar{\Gamma}_{n,N}^b
\left(
  \left[f-\eta_n^N(f)\right]^{\otimes 2}
  \right) 
= \mathscr{O}_{\mathbf{p}}\left(1\right),
$$
the same algebraic manipulation yields 
$$
  N V_n^N(f-\eta_n^N(f))
  -
  \sum_{p = 0}^n
  \left\{ \bar{\Gamma}_{n,N}^{(p)}
    \left(
      \left[f-\eta_n^N(f)
      \right]^{\otimes 2}
      \right) -
  \bar{\Gamma}_{n,N}^{(\varnothing)}
    \left(
      \left[f-\eta_n^N(f)
      \right]^{\otimes 2}
      \right) 
    \right\}
  =
  \mathscr{O}_{\mathbf{p}}\left(\frac{1}{N}\right).
$$
This closes the proof of Proposition \ref{lm:convergence-proba}.

\begin{appendices}

\section{Many-body Feynman-Kac models}
\label{sec:mbfk}

The many-body Feynman-Kac model was proposed
in \cite{del2016gibbs} to study the propagation of chaos property of
the Conditional Particle Markov Chain Monte Carlo introduced in \cite{Andrieu2010pmcmc}.
The basic idea is to trace the information of all particles in the IPS along
with its genealogy, and to construct an instrumental particle block which is 
heavily dependent (identical) to some specific particles. We call these
instrumental particles the \emph{coupled particle block} of the IPS.

\subsection{Duality formula}
At each layer, the particles in the original IPS are denoted by
$\mathbf{X_{p}}$, with its genealogy $\mathbf{A_{p-1}}$. The coupled particle
block of $q$ particles is denoted by $\tilde{X}_{p}^{[q]}$, with
its genealogy denoted by $\tilde{A}_{p-1}^{[q]}$. The corresponding variables in the integral
operators will be denoted by $\bm{x_{p}}$, $\bm{a_{p-1}}$,
$\tilde{x}_{p}^{[q]}$ and $\tilde{a}_{p-1}^{[q]}$ respectively.

Before giving specific definitions, we want to mention that the
mathematical object we would like to look into is the \emph{whole particle
system}, namely the \emph{original IPS} and the \emph{coupled particle
block} with
genealogy. At each layer $p$, we are interested by the tuple:
$$
(\mathbf{X_{p}}, \mathbf{A_{p-1}}, \tilde{X}_p^{[q]}, \tilde{A}_{p-1}^{[q]}).
$$
As for the basic idea of Particle Markov Chain Monte Carlo method
\cite{Andrieu2010pmcmc}, we study respectively the distributions of 
$$
\mathbf{X_{p}}, \mathbf{A_{p-1}} 
\;\big\vert\;
\tilde{X}_p^{[q]},\tilde{A}_{p-1}^{[q]}
$$
and
$$
\tilde{X}_p^{[q]},\tilde{A}_{p-1}^{[q]}
\;\big\vert\;
\mathbf{X_{p}}, \mathbf{A_{p-1}} 
.
$$
Thanks to the specific construction, as well as the relatively simple
multinomial resampling scheme of Feynman-Kac IPS, Lemma \ref{lm:duality} provides a duality
formula to connect both distributions and leads in particular to Proposition \ref{lm:LackOfBias}. This latter result is crucial to prove the consistency of our term by term estimator in Theorem \ref{thm:coalescent-measure}.

In this section, a transition kernel denoted by the letter $Q$ is a
Feynman-Kac kernel, meaning that its total mass is not
necessarily 1, and it can be expressed by the product of a positive potential function and
a Markov kernel. All transition kernels denoted by the letter $M$ are
Markov kernels.

Notice that the transition from level $p-1$ to
level $p$ of the IPS with its genealogy defined in Section \ref{sec-old:IPS} can be expressed as
$$
(\mathbf{A_{p-1}},\mathbf{X_{p}})
\sim
\bigotimes_{i=1}^N \Phi_{p,N}(\mathbf{X_{p-1}},d(A_{p-1}^i,X_{p}^i))
$$
with $\Phi_{p,N}$ defined by
$$
\Phi_{p,N}(\bm{x_{p-1}},d(a_{p-1}^i,x_{p}^i)) = 
S_{p-1,N}(\bm{x_{p-1}},da_{p-1}^i)\times 
M_{p,N}(x_{p-1}^{a_{p-1}^i},dx_{p}^i).
$$
We define the transition of the original IPS with its genealogy by
$$
\mathcal{M}_{p}
(\bm{x_{p-1}},d(\bm{a_{p-1}},\bm{x_p}))
:=
\prod_{i=1}^N
  \Phi_{p,N}(\bm{x_{p-1}},d(a_{p-1}^i,x_p^i))
$$
and the potential function of the particle block of size $q$ by
$$
\mathcal{G}_{p-1}^{(q)}(\bm{x_{p-1}})
:=
m(\bm{x_{p-1}})(G_{p-1,N})^q.
$$
We denote the associated Feynman-Kac transition kernel 
$$
\mathbf{Q}_{p}^{(q)}
(\bm{x_{p-1}},d(\bm{a_{p-1}},\bm{x_p}))
:=
\mathcal{G}_{p-1}^{(q)}(\bm{x_{p-1}})
\times
\mathcal{M}_{p}
(\bm{x_{p-1}},d(\bm{a_{p-1}},\bm{x_p})).
$$
Given $\ell_p^{[q]}\in (N)^q$, $\tilde{a}_{p-1}^{[q]}\in [N]^q$
and $\tilde{x}_p^{[q]}\in E_{p}^q$,
we define
\begin{multline*}
\mathbb{M}_{p}^{\tilde{a}_{p-1}^{[q]},\ell_p^{[q]},\tilde{x}_p^{[q]}}
(\bm{x_{p-1}},d(\bm{a_{p-1}},\bm{x_p}))
\\
:=
\prod_{i\in [N]\backslash \{\ell_p^1,\dots,\ell_p^q\}}\left\{
  \Phi_{p,N}(\bm{x_{p-1}},d(a_{p-1}^i,x_p^i))
\right\}
\times
\delta_{\tilde{x}_p^{[q]}}(d
 x_p^{\ell_p^{[q]}})
 \times
\delta_{\tilde{a}_{p-1}^{[q]}}(da_{p-1}^{\ell_p^{[q]}})
\end{multline*}
the conditional transition for the 
original particle system given the coupled particle block
$\tilde{X}_p^{[q]}=\tilde{x}_p^{[q]}$ at position $\ell_p^{[q]}$ with frozen genealogy   
$\tilde{A}_{p-1}^{[q]} = \tilde{a}_{p-1}^{[q]}$.
In particular, we denote
$$
\mathbb{M}_{0}^{\ell_0^{[q]},\tilde{x}_0^{[q]}}(d\bm{x_0}) := 
\left\{\prod_{i\in[N]\backslash
\{\ell_0^1,\dots,\ell_0^q\}}\eta_0(dx_0^i)\right\}
\times \delta_{\tilde{x}_0^{[q]}}(dx_0^{\ell_0^{[q]}}).
$$
We also define
$$
\mathbb{Q}_{p}^{(q)}(\bm{x_{p-1}},d(\tilde{a}_{p-1}^{[q]},\tilde{x}_{p}^{[q]}))
:=
m([N])^{\otimes q}(d \tilde{a}_{p-1}^{[q]})\ 
Q_{p,N}^{\otimes q}(x_{p-1}^{\tilde{a}_{p-1}^{[q]}}, d\tilde{x}_p^{[q]}),
$$
and
$$
\mathbf{M}_p^{\ell_p^{[q]}}\left(
  (\bm{a_{p-1}},\bm{x_{p}}),d(\tilde{a}_{p-1}^{[q]},\tilde{x}_{p}^{[q]})
  \right)
:=
\delta_{a_{p-1}^{\ell_p^{[q]}}}(d\tilde{a}_{p-1}^{[q]})\ \delta_{x_p^{\ell_p^{[q]}}}(d\tilde{x}_p^{[q]}).
$$

Then we have the following pivotal duality formula, which is simply a generalization of Lemma 4.1 in  \cite{del2016gibbs}. We will apply it in the proof of Proposition \ref{lm:LackOfBias} with $q=2$.

\begin{lemma}\label{lm:duality}
  For $p \geq 1$, $q\in [N]$ and $\ell_p^{[q]}\in(N)^q$, we have the following 
  duality formula between integral operators
  \begin{multline*}
    \mathbf{Q}_{p}^{(q)}(\bm{x_{p-1}},d(\bm{a_{p-1}},\bm{x_p}))\ 
     \mathbf{M}_p^{\ell_p^{[q]}}\left(
       (\bm{a_{p-1}},\bm{x_{p}}),d(\tilde{a}_{p-1}^{[q]},\tilde{x}_{p}^{[q]})
       \right)
       \\= \mathbb{Q}_{p}^{(q)}
     (\bm{x_{p-1}}, d(\tilde{a}_{p-1}^{[q]},\tilde{x}_p^{[q]}))\ 
     \mathbb{M}_{p}^{\tilde{a}_{p-1}^{[q]},\ell_p^{[q]},\tilde{x}_p^{[q]}}
        (\bm{x_{p-1}},d(\bm{a_{p-1}},\bm{x_p})),
  \end{multline*}
   and 
  \begin{equation*}
    \eta_0^{\otimes N}(d \bm{x_0})\ 
       \delta_{x_0^{\ell_0^{[q]}}}(d\tilde{x}_0^{[q]})
      = \eta_0^{\otimes q} 
      (d\tilde{x}_0^{[q]})\ 
     \mathbb{M}_{0}^{\ell_0^{[q]},\tilde{x}_0^{[q]}}(d\bm{x_0}).
  \end{equation*}
\end{lemma}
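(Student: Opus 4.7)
The goal is a purely algebraic measure identity: both sides of the recursive statement are measures on the joint space $(\bm{a_{p-1}},\bm{x_p},\tilde a_{p-1}^{[q]},\tilde x_p^{[q]})$, kernel-valued in $\bm{x_{p-1}}$. My plan is a direct calculation. I will first isolate a one-particle selection-mutation identity, then read off the full duality by splitting the $N$-fold product $\mathcal{M}_p=\bigotimes_{i=1}^N\Phi_{p,N}$ into ``frozen'' factors (indices $i\in\{\ell_p^1,\dots,\ell_p^q\}$) and ``free'' factors $i\in[N]\setminus\{\ell_p^1,\dots,\ell_p^q\}$, and finally use the elementary symmetry of Dirac masses to swap the frozen coordinates with the coupled block.

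The key one-particle identity I would establish first is
\begin{equation*}
m(\bm{x_{p-1}})(G_{p-1,N})\;\Phi_{p,N}\bigl(\bm{x_{p-1}},d(a,x)\bigr) \;=\; m([N])(da)\;Q_{p,N}(x_{p-1}^{a},dx).
\end{equation*}
This is just a rewriting: plugging the definition of $S_{p-1,N}$ from \eqref{eq:selection}, the LHS equals $\frac{1}{N}\sum_{k=1}^N G_{p-1,N}(x_{p-1}^k)\,\delta_k(da)\,M_{p,N}(x_{p-1}^{a},dx)$, and using $\delta_k(da)$ to replace $x_{p-1}^k$ by $x_{p-1}^{a}$ and $G_{p-1,N}M_{p,N}$ by $Q_{p,N}$ yields the RHS. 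Taking the $q$-fold tensor product over the frozen indices $\ell_p^{[q]}\in(N)^q$ and recalling $\mathcal{G}_{p-1}^{(q)}(\bm{x_{p-1}})=m(\bm{x_{p-1}})(G_{p-1,N})^q$ gives
\begin{equation*}
\mathcal{G}_{p-1}^{(q)}(\bm{x_{p-1}})\prod_{k=1}^{q}\Phi_{p,N}\bigl(\bm{x_{p-1}},d(a_{p-1}^{\ell_p^k},x_p^{\ell_p^k})\bigr) \;=\; m([N])^{\otimes q}(da_{p-1}^{\ell_p^{[q]}})\;Q_{p,N}^{\otimes q}\bigl(x_{p-1}^{a_{p-1}^{\ell_p^{[q]}}},dx_p^{\ell_p^{[q]}}\bigr).
\end{equation*}

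To conclude the main identity, I expand its LHS by factoring $\mathbf{Q}_p^{(q)}=\mathcal{G}_{p-1}^{(q)}\,\mathcal{M}_p$ and splitting the product over $i$ into frozen and free parts. The free-index factor $\prod_{i\notin\{\ell_p^1,\dots,\ell_p^q\}}\Phi_{p,N}$ appears unchanged in $\mathbb{M}_p^{\tilde a_{p-1}^{[q]},\ell_p^{[q]},\tilde x_p^{[q]}}$, so it matches on both sides. On the frozen part, after multiplying by $\delta_{a_{p-1}^{\ell_p^{[q]}}}(d\tilde a_{p-1}^{[q]})\,\delta_{x_p^{\ell_p^{[q]}}}(d\tilde x_p^{[q]})$, I apply the tensor identity displayed above, and then use the symmetry $f(y)\,\delta_y(d\tilde y)=f(\tilde y)\,\delta_{\tilde y}(dy)$ to transfer the evaluations at $(a_{p-1}^{\ell_p^{[q]}},x_p^{\ell_p^{[q]}})$ onto the coupled block, thereby recovering exactly $\mathbb{Q}_p^{(q)}(\bm{x_{p-1}},d(\tilde a_{p-1}^{[q]},\tilde x_p^{[q]}))$ together with the frozen Dirac factor of $\mathbb{M}_p^{\tilde a_{p-1}^{[q]},\ell_p^{[q]},\tilde x_p^{[q]}}$. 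The initial identity at $p=0$ is essentially trivial: the product structure of $\eta_0^{\otimes N}$ combined with the fact that the components of $\ell_0^{[q]}$ are pairwise distinct implies that the subvector indexed by $\ell_0^{[q]}$ is $\eta_0^{\otimes q}$-distributed and independent of the other $N-q$ coordinates.

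The only real obstacle I anticipate is notational, namely keeping careful track of which Dirac mass integrates which variable and in which direction. Once the one-particle identity and its tensorization are in place, the remainder of the argument is mechanical.
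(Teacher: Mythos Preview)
Your proof is correct and follows essentially the same approach as the paper: both arguments observe that the free-index factors $\prod_{i\notin\{\ell_p^1,\dots,\ell_p^q\}}\Phi_{p,N}$ are common to the two sides, and then verify the identity on the frozen block by cancelling the normalization $m(\bm{x_{p-1}})(G_{p-1,N})^q$ against the selection weights to recover $m([N])^{\otimes q}\,Q_{p,N}^{\otimes q}$, finally using the Dirac symmetry to swap $(a_{p-1}^{\ell_p^{[q]}},x_p^{\ell_p^{[q]}})$ with $(\tilde a_{p-1}^{[q]},\tilde x_p^{[q]})$. The only cosmetic difference is that you isolate and tensorize a one-particle identity, whereas the paper performs the $q$-particle computation directly against a test function.
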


\begin{proof}
    Step $0$ is clear. For
    $p\geq 1$, it suffices to check that the nonidentical
    parts are equal, namely
    \begin{multline*}
    \mathcal{G}_{p-1}^{(q)}(\bm{x_{p-1}})
    \left\{\sum_{k = 1}^{n} \frac{G_{p-1,N}(x_{p-1}^k)}{N\ 
    m(\bm{x_{p-1}})(G_{p-1,N})}\delta_k\right\}^{\otimes q}(d
    a_{p-1}^{\ell_p^{[q]}})\ 
    M_{p,N}^{\otimes q}(x_{p-1}^{a_{p-1}^{\ell_p^{[q]}}},dx_p^{\ell_p^{[q]}})\\
    \mathbf{M}_p^{\ell_p^{[q]}}\left(
    (\bm{a_{p-1}},\bm{x_{p}}),d(\tilde{a}_{p-1}^{[q]},\tilde{x}_{p}^{[q]})
    \right)
    =
    \mathbb{Q}_{p}^{(q)}(\bm{x_{p-1}},d(\tilde{a}_{p-1}^{[q]},\tilde{x}_{p}^{[q]}))\ 
    \delta_{\tilde{x}_p^{[q]}}(dx_p^{\ell_p^{[q]}})\ 
    \delta_{\tilde{a}_{p-1}^{[q]}}(da_{p-1}^{\ell_p^{[q]}}).
    \end{multline*}
    Fixing $\ell_p^{[q]} \in(N)^q$ and $\bm{x_{p-1}}\in E_{p-1}^N$, 
   consider a function $\mathbf{F}\in\mathcal{B}_b([N]^q\times[N]^q\times E_p^q\times E_p^q)$. Moreover, let $a^{[q]}=(a_1,\dots,a_q)$ and $x^{[q]}=(x_1,\dots,x_q)$ denote generic variables belonging respectively to $[N]^q$ and $E_p^q$. Then,
    we may write 
    \begin{equation*}
        \begin{split}
            &\int
          m(\bm{x_{p-1}})(G_{p-1,N})^q\left\{\sum_{k = 1}^{N} \frac{G_{p-1,N}(x_{p-1}^k)}{N\ 
        m(\bm{x_{p-1}})(G_{p-1,N})}\delta_k\right\}^{\otimes q}(d
            a_{p-1}^{\ell_p^{[q]}})
            \\& 
            \ \ \ \ \ M_{p,N}^{\otimes q}(x_{p-1}^{a_{p-1}^{\ell_p^{[q]}}},dx_p^{\ell_p^{[q]}})\ 
            \delta_{x_p^{\ell_p^{[q]}}}(d\tilde{x}_p^{[q]})\ 
            \delta_{a_{p-1}^{\ell_p^{[q]}}}(d\tilde{a}_{p-1}^{[q]})\
            \mathbf{F}(\tilde{a}_{p-1}^{[q]},a_{p-1}^{\ell_p^{[q]}}
            ,x_p^{\ell_p^{[q]}},\tilde{x}_p^{[q]})            
            \\=&
            \int             m([N])^{\otimes q}(d  a^{[q]})\
            G_{p-1,N}^{\otimes
            q}(x_{p-1}^{ a^{[q]}})\ 
            M_{p,N}^{\otimes q}(x_{p-1}^{ a^{[q]}},d x^{[q]})\ \mathbf{F}( a^{[q]}, a^{[q]}, x^{[q]}, x^{[q]})
             \\=&
            \int            m([N])^{\otimes q}(d  a^{[q]})
            \  Q_{p,N}^{\otimes q}(x_{p-1}^{ a^{[q]}},d x^{[q]})\  \mathbf{F}( a^{[q]}, a^{[q]}, x^{[q]}, x^{[q]})
            \\=&
            \int  m([N])^{\otimes q}(d
            \tilde{a}_{p-1}^{[q]})\ Q_{p,N}^{\otimes
            q}(x_{p-1}^{\tilde{a}_{p-1}^{[q]}}, d
            \tilde{x}_p^{[q]})\ \delta_{\tilde{x}_p^{[q]}}(dx_p^{\ell_p^{[q]}})\ 
            \delta_{\tilde{a}_{p-1}^{[q]}}(da_{p-1}^{\ell_p^{[q]}})\ \mathbf{F}(\tilde{a}_{p-1}^{[q]},a_{p-1}^{\ell_p^{[q]}},x_p^{\ell_p^{[q]}},\tilde{x}_p^{[q]})
            \\=&
            \int  \mathbb{Q}_{p}^{(q)}(\bm{x_{p-1}},d(\tilde{a}_{p-1}^{[q]},\tilde{x}_{p}^{[q]}))\ 
            \delta_{\tilde{x}_p^{[q]}}(dx_p^{\ell_p^{[q]}})\ 
            \delta_{\tilde{a}_{p-1}^{[q]}}(da_{p-1}^{\ell_p^{[q]}})\ \mathbf{F}(\tilde{a}_{p-1}^{[q]},a_{p-1}^{\ell_p^{[q]}},x_p^{\ell_p^{[q]}},\tilde{x}_p^{[q]}).
        \end{split}
    \end{equation*}
    This ends the proof of the duality formula.
\end{proof}

Let us recall \eqref{eq:LambdaElln} and \eqref{pf:Lambda1}:
\begin{equation}\label{lsjclcn}
\Lambda_{n}^{\ell_n^{[2]}}
=
\sum_{\ell_{0:n-1}^{[2]}\in \left((N)^2\right)^{\times n}}
\left\{\prod_{p=0}^{n-1}
\lambda_p^b(A_p^{\ell_{p+1}^{[2]}},\ell_p^{[2]})
\right\}=
\sum_{\ell_{n-1}^{[2]}\in (N)^2} \Lambda_{n-1}^{\ell_{n-1}^{[2]}}\
\lambda_{n-1}^b(A_{n-1}^{\ell_n^{[2]}},\ell_{n-1}^{[2]}).
\end{equation}
with the convention
$
\Lambda_{0}^{\ell_0^{[2]}}
=1
$. 
In fact, this gives another representation of the approximation of the
coalescent tree-based measures:
$$
\Gamma_{n,N}^b(f\otimes g)=
\gamma_n^N(1)^2\ 
\frac{N^{n-1}}{(N-1)^{n+1}}
\sum_{\ell_n^{[2]}\in (N)^2}
\Lambda_{n}^{\ell_n^{[2]}} 
C_{b_n}(f\otimes g)(X_n^{\ell_n^{[2]}}).
$$
Recall that
$$
\mathscr{G}_{n-1}^N :=
\sigma(\mathbf{X_{0}},\dots,\mathbf{X_{n-1}},\mathbf{A_{0}},\dots,\mathbf{A_{n-2}}).
$$
The upcoming result is useful in the proof of Lemma \ref{lm:cvg-proba-Gamma}.

\begin{proposition}\label{lm:LackOfBias}
  Under $\mathcal{A}$\ref{A1}, 
  for any $\ell_n^{[2]}\in(N)^2$, 
  any coalescence indicator $b$, 
  and any test functions $f$ and $g$ in $\mathcal{B}_b(E_n)$, we have, for all
  $n\geq 1$, that
  $$
  \mathbf{E}\left[
  \gamma_n^N(1)^2\ 
  \frac{N^{n-1}}{(N-1)^{n+1}}
 \Lambda_{n}^{\ell_n^{[2]}} 
  C_{b_n}(f\otimes g)(X_n^{\ell_n^{[2]}})
  \;\Bigg\vert\;
  \mathscr{G}_{n-1}^N
  \right]
  =
  \frac{1}{N(N-1)}
  \Gamma_{n-1,N}^b Q_{n,N}^{\otimes 2}C_{b_n}(f\otimes g).
  $$
\end{proposition}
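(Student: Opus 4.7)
The plan is to pull all $\mathscr{G}_{n-1}^N$-measurable quantities out of the conditional expectation and then use the duality formula of Lemma \ref{lm:duality} with $q=2$ to evaluate what remains. First I would decompose $\gamma_n^N(1)^2 = \gamma_{n-1}^N(1)^2\,\mathcal{G}_{n-1}^{(2)}(\mathbf{X_{n-1}})$ and use \eqref{lsjclcn} to split $\Lambda_n^{\ell_n^{[2]}}$ as a sum over $\ell_{n-1}^{[2]}\in(N)^2$ of $\Lambda_{n-1}^{\ell_{n-1}^{[2]}}\lambda_{n-1}^b(A_{n-1}^{\ell_n^{[2]}},\ell_{n-1}^{[2]})$. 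Since $\gamma_{n-1}^N(1)^2$, $\mathcal{G}_{n-1}^{(2)}(\mathbf{X_{n-1}})$, $\Lambda_{n-1}^{\ell_{n-1}^{[2]}}$ and the positions $X_{n-1}^{\ell_{n-1}^{[2]}}$ are all $\mathscr{G}_{n-1}^N$-measurable, the problem reduces to computing, for each fixed $\ell_{n-1}^{[2]}$,
$$
\mathbf{E}\!\left[\mathcal{G}_{n-1}^{(2)}(\mathbf{X_{n-1}})\,\lambda_{n-1}^b(A_{n-1}^{\ell_n^{[2]}},\ell_{n-1}^{[2]})\,C_{b_n}(f\otimes g)(X_n^{\ell_n^{[2]}})\,\Big|\,\mathscr{G}_{n-1}^N\right].
$$

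Next, I would apply Lemma \ref{lm:duality} with $q=2$ at level $p=n$. Recalling that the conditional law of $(\mathbf{A_{n-1}},\mathbf{X_n})$ given $\mathscr{G}_{n-1}^N$ is $\mathcal{M}_n(\mathbf{X_{n-1}},\cdot)$ and that $\mathbf{Q}_n^{(2)} = \mathcal{G}_{n-1}^{(2)}\,\mathcal{M}_n$, the factor $\mathcal{G}_{n-1}^{(2)}(\mathbf{X_{n-1}})$ is precisely what is needed to convert $\mathcal{M}_n$ into $\mathbf{Q}_n^{(2)}$. Since the integrand only depends on the marginal $(A_{n-1}^{\ell_n^{[2]}},X_n^{\ell_n^{[2]}})$, applying the duality formula and integrating out the Markov kernel $\mathbb{M}_n^{\tilde{a}_{n-1}^{[2]},\ell_n^{[2]},\tilde{x}_n^{[2]}}$ leaves
$$
\int\mathbb{Q}_n^{(2)}\!\left(\mathbf{X_{n-1}},d(\tilde{a}_{n-1}^{[2]},\tilde{x}_n^{[2]})\right)\lambda_{n-1}^b(\tilde{a}_{n-1}^{[2]},\ell_{n-1}^{[2]})\,C_{b_n}(f\otimes g)(\tilde{x}_n^{[2]}).
$$
Expanding $\mathbb{Q}_n^{(2)} = m([N])^{\otimes 2}(d\tilde{a}_{n-1}^{[2]})\,Q_{n,N}^{\otimes 2}(x_{n-1}^{\tilde{a}_{n-1}^{[2]}},d\tilde{x}_n^{[2]})$ turns the integration over $\tilde{a}_{n-1}^{[2]}$ into the average $N^{-2}\sum_{(j_1,j_2)\in[N]^2}$, and the indicator $\lambda_{n-1}^b$ collapses this sum to the single pair $(\ell_{n-1}^1,\ell_{n-1}^2)$ when $b_{n-1}=0$ and $(\ell_{n-1}^1,\ell_{n-1}^1)$ when $b_{n-1}=1$. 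In both cases, the resulting expression is recognized as $N^{-2}\,C_{b_{n-1}}\bigl(Q_{n,N}^{\otimes 2}C_{b_n}(f\otimes g)\bigr)(X_{n-1}^{\ell_{n-1}^{[2]}})$.

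Finally, substituting back and summing over $\ell_{n-1}^{[2]}\in(N)^2$ yields
$$
\text{LHS}=\gamma_{n-1}^N(1)^2\,\frac{N^{n-3}}{(N-1)^{n+1}}\sum_{\ell_{n-1}^{[2]}\in(N)^2}\Lambda_{n-1}^{\ell_{n-1}^{[2]}}\,C_{b_{n-1}}\!\bigl(Q_{n,N}^{\otimes 2}C_{b_n}(f\otimes g)\bigr)(X_{n-1}^{\ell_{n-1}^{[2]}}).
$$
Comparing with the definition of $\Gamma_{n-1,N}^b$ applied to the test function $Q_{n,N}^{\otimes 2}C_{b_n}(f\otimes g)$, which carries the prefactor $\gamma_{n-1}^N(1)^2\,N^{n-2}/(N-1)^n$, the ratio of prefactors is exactly $1/(N(N-1))$, and the identity follows.

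The main obstacle I anticipate is a bookkeeping one rather than a conceptual one: correctly observing that, despite the structural difference between $b_{n-1}=0$ and $b_{n-1}=1$ in $\lambda_{n-1}^b$, both cases collapse to $C_{b_{n-1}}$ applied to $Q_{n,N}^{\otimes 2}C_{b_n}(f\otimes g)$ evaluated at $X_{n-1}^{\ell_{n-1}^{[2]}}$. This is what makes the duality formula produce exactly the shape of $\Gamma_{n-1,N}^b$ after an additional $Q_{n,N}^{\otimes 2}C_{b_n}$ step, and it is what ultimately matches the combinatorial prefactor $1/(N(N-1))$ on the right-hand side.
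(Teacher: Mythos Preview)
Your proposal is correct and follows essentially the same approach as the paper: decompose $\Lambda_n^{\ell_n^{[2]}}$ via \eqref{lsjclcn}, pull the $\mathscr{G}_{n-1}^N$-measurable factors out, apply the duality formula of Lemma~\ref{lm:duality} with $q=2$ at level $n$, integrate out the Markov kernel $\mathbb{M}_n^{\tilde{a}_{n-1}^{[2]},\ell_n^{[2]},\tilde{x}_n^{[2]}}$, and collapse the $m([N])^{\otimes 2}$ average through $\lambda_{n-1}^b$ to recover $C_{b_{n-1}}$ in both cases $b_{n-1}\in\{0,1\}$. The only cosmetic difference is that the paper carries out the conditional-expectation computation by pairing against an arbitrary test function $F(\mathbf{A_{0:n-2}},\mathbf{X_{0:n-1}})$, whereas you work directly with the conditional expectation; the substance and the bookkeeping of the prefactors are identical.
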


\begin{proof}
  By applying \eqref{lsjclcn}, we obtain
  \begin{multline*}
  \gamma_n^N(1)^2
  \frac{N^{n-1}}{(N-1)^{n+1}}
  \Lambda_{n}^{\ell_n^{[2]}} 
  C_{b_n}
  (f\otimes g)(X_n^{\ell_n^{[2]}})
  \\
  =
  \gamma_n^N(1)^2 
  \frac{N^{n-1}}{(N-1)^{n+1}}
  \sum_{\ell_{n-1}^{[2]}\in (N)^2} \Lambda_{n-1}^{\ell_{n-1}^{[2]}}
  \lambda_{n-1}^b(A_{n-1}^{\ell_n^{[2]}},\ell_{n-1}^{[2]})
  C_{b_n}
  (f\otimes g)(X_n^{\ell_n^{[2]}}).
  \end{multline*}
  Since $\Lambda_{n-1}^{\ell_{n-1}^{[2]}}$ is
  $\mathscr{G}_{n-1}^N$-measurable,
  it is sufficient to show that for each $\ell_{n-1}^{[2]}\in (N)^2$,
  we have 
  \begin{equation}\label{pf:StepnLambda}
  \begin{split}    
  &
  \mathbf{E}\left[
   m(\mathbf{X_{n-1}})(G_{n-1,N})^2
  \lambda_{n-1}^b(A_{n-1}^{\ell_n^{[2]}},\ell_{n-1}^{[2]})
  C_{b_n}
  (f\otimes g)(X_n^{\ell_n^{[2]}})
  \;\Bigg\vert\;
  \mathscr{G}_{n-1}^N
  \right]
  \\
    =&
  \frac{1}{N^2}
  C_{b_{n-1}}
  Q_{n,N}^{\otimes 2}C_{b_n} (f\otimes g)(X_{n-1}^{\ell_{n-1}^{[2]}}).
  \end{split}
  \end{equation}
  Before starting our reasoning, for the sake of simplification, we remark that 
  $$
  \lambda_{n-1}^b(A_{n-1}^{\ell_n^{[2]}},\ell_{n-1}^{[2]})
  C_{b_n}
  (f\otimes g)(X_n^{\ell_n^{[2]}})
  $$
  can be seen as a bounded measurable function of $(\mathbf{A_{n-1}},\mathbf{X_n})$, 
  rather than a measurable
  function of $(X_n^{\ell_n^{[2]}},A_{n-1}^{\ell_n^{[2]}})$.
  With this in mind, for any test function 
  $$
  F \in \mathcal{B}_b(\underbrace{[N]^N\times\cdots\times[N]^N}_{(n-1) \text{ times}}\times E_0^N\times\cdots\times E_{n-1}^N),
  $$ we have, by definition of
  $
  \mathbf{Q}_{p}^{(2)}
  (\bm{x_{p-1}},d(\bm{a_{p-1}},\bm{x_p}))
  $, 
  \begin{equation}\label{pf:cond-exp1}
  \begin{aligned}
  &\mathbf{E}\left[
  m(\mathbf{X_{n-1}})(G_{n-1,N})^2
  \lambda_{n-1}^b(A_{n-1}^{\ell_n^{[2]}},\ell_{n-1}^{[2]})
  C_{b_n}
  (f\otimes g)(X_n^{\ell_n^{[2]}})
  F(\mathbf{A_{0:n-2}},\mathbf{X_{0:n-1}})
  \right]
  \\=&
   \int
   \mathbf{Q}_n^{(2)}(\bm{x_{n-1}}, d(\bm{a_{n-1}},\bm{x_n}))
   \lambda_{n-1}(a_{n-1}^{\ell_n^{[2]}},\ell_{n-1}^{[2]})
   C_{b_n}(f\otimes g)(x_n^{\ell_n^{[2]}})
   \\
      &\quad\; 
      F(\bm{a_{0:n-2}},\bm{x_{0:n-1}})
    \mu_{n-1}(d\bm{a_{0:n-2}},d\bm{x_{0:n-1}}),
  \end{aligned}
  \end{equation}
  where $\mu_{n-1}$ denotes the measure corresponding to the underlying 
  joint distribution of the IPS from
  step 0 to step $n-1$ with genealogy.
  Taking into account that
  $$
  \mathbf{M}_n^{\ell_n^{[2]}}\left(
  (\bm{a_{n-1}},\bm{x_{n}}),d(\tilde{a}_{n-1}^{[2]},\tilde{x}_{n}^{[2]})
  \right)=\delta_{a_{n-1}^{\ell_n^{[2]}}}(d\tilde{a}_{n-1}^{[2]})\ \delta_{x_n^{\ell_n^{[2]}}}(d\tilde{x}_n^{[2]})
  $$
  is a Markov kernel, we can introduce it in the right-hand side of
  \eqref{pf:cond-exp1} to obtain  
  $$
  \begin{aligned}
  &\mathbf{E}\left[
  m(\mathbf{X_{n-1}})(G_{n-1,N})^2
  \lambda_{n-1}^b(A_{n-1}^{\ell_n^{[2]}},\ell_{n-1}^{[2]})
  C_{b_n}
  (f\otimes g)(X_n^{\ell_n^{[2]}})
  F(\bm{a_{0:n-2}},\bm{x_{0:n-1}})
  \right]
  \\=&
   \int
   \mathbf{Q}_n^{(2)}(\bm{x_{n-1}}, d(\bm{a_{n-1}},\bm{x_n}))
    \mathbf{M}_n^{\ell_n^{[2]}}\left((\bm{a_{n-1}},\bm{x_n}),d
      (\tilde{a}_{n-1}^{[2]},\tilde{x}_{n}^{[2]})\right)
   \\
      &\quad 
   \lambda_{n-1}(a_{n-1}^{\ell_n^{[2]}},\ell_{n-1}^{[2]})
   C_{b_n}(f\otimes g)(x_n^{\ell_n^{[2]}})
      F(\bm{a_{0:n-2}},\bm{x_{0:n-1}})
    \mu_{n-1}(d\bm{a_{0:n-2}},d\bm{x_{0:n-1}}).
  \end{aligned}
  $$
  The design of many-body Feynman-Kac models allows replacing
  $(a_{n-1}^{\ell_n^{[2]}},x_n^{\ell_n^{[2]}})$ with
  $(\tilde{a}_{n-1}^{[2]},\tilde{x}_{n}^{[2]})$ in the observation functions,
  as they are equal by definition. Hence, one has
  the following equality:
  $$
  \begin{aligned}
  &
   \int
   \mathbf{Q}_n^{(2)}(\bm{x_{n-1}}, d(\bm{a_{n-1}},\bm{x_n}))
    \mathbf{M}_n^{\ell_n^{[2]}}\left((\bm{a_{n-1}},\bm{x_n}),d
      (\tilde{a}_{n-1}^{[2]},\tilde{x}_{n}^{[2]})\right)
   \\
      &\quad 
   \lambda_{n-1}(a_{n-1}^{\ell_n^{[2]}},\ell_{n-1}^{[2]})
   C_{b_n}(f\otimes g)(x_n^{\ell_n^{[2]}})
      F(\bm{a_{0:n-2}},\bm{x_{0:n-1}})
    \mu_{n-1}(d\bm{a_{0:n-2}},d\bm{x_{0:n-1}})
  \\=&
   \int
   \mathbf{Q}_n^{(2)}(\bm{x_{n-1}}, d(\bm{a_{n-1}},\bm{x_n}))
    \mathbf{M}_n^{\ell_n^{[2]}}\left((\bm{a_{n-1}},\bm{x_n}),d
      (\tilde{a}_{n-1}^{[2]},\tilde{x}_{n}^{[2]})\right)
   \\
      &\quad 
      \lambda_{n-1}(\tilde{a}_{n-1}^{[2]},\ell_{n-1}^{[2]})
   C_{b_n}(f\otimes g)(\tilde{x}_n^{[2]})
      F(\bm{a_{0:n-2}},\bm{x_{0:n-1}})
    \mu_{n-1}(d\bm{a_{0:n-2}},d\bm{x_{0:n-1}}).
  \end{aligned}
  $$
  Now, the duality formula given in Lemma \ref{lm:duality} yields
  $$
  \begin{aligned}
  &
   \int
   \mathbf{Q}_n^{(2)}(\bm{x_{n-1}}, d(\bm{a_{n-1}},\bm{x_n}))
    \mathbf{M}_n^{\ell_n^{[2]}}\left((\bm{a_{n-1}},\bm{x_n}),d
      (\tilde{a}_{n-1}^{[2]},\tilde{x}_{n}^{[2]})\right)
   \\
      &\quad 
      \lambda_{n-1}(\tilde{a}_{n-1}^{[2]},\ell_{n-1}^{[2]})
   C_{b_n}(f\otimes g)(\tilde{x}_n^{[2]})
      F(\bm{a_{0:n-2}},\bm{x_{0:n-1}})
    \mu_{n-1}(d\bm{a_{0:n-2}},d\bm{x_{0:n-1}})
    \\=&
   \int
     \mathbb{Q}_{n}^{(2)}
     (\bm{x_{n-1}}, d(\tilde{a}_{n-1}^{[2]},\tilde{x}_n^{[2]}))\ 
     \mathbb{M}_{n}^{\tilde{a}_{n-1}^{[2]},\ell_n^{[2]},\tilde{x}_n^{[2]}}
        (\bm{x_{n-1}},d(\bm{a_{n-1}},\bm{x_n}))
   \\
      &\quad 
      \lambda_{n-1}(\tilde{a}_{n-1}^{[2]},\ell_{n-1}^{[2]})
   C_{b_n}(f\otimes g)(\tilde{x}_n^{[2]})
      F(\bm{a_{0:n-2}},\bm{x_{0:n-1}})
    \mu_{n-1}(d\bm{a_{0:n-2}},d\bm{x_{0:n-1}}).
  \end{aligned}
  $$
  In addition, since
  $$
  \mathbb{M}_{n}^{\tilde{a}_{n-1}^{[2]},\ell_n^{[2]},\tilde{x}_n^{[2]}}
     (\bm{x_{n-1}},d(\bm{a_{n-1}},\bm{x_n}))
  $$
  is a Markov kernel for any choice of $(\tilde{a}_{n-1}^{[2]},\ell_n^{[2]},\tilde{x}_n^{[2]})$,
  we deduce that
  $$
  \begin{aligned}
  &
   \int
     \mathbb{Q}_{n}^{(2)}
     (\bm{x_{n-1}}, d(\tilde{a}_{n-1}^{[2]},\tilde{x}_n^{[2]}))\ 
     \mathbb{M}_{n}^{\tilde{a}_{n-1}^{[2]},\ell_n^{[2]},\tilde{x}_n^{[2]}}
        (\bm{x_{n-1}},d(\bm{a_{n-1}},\bm{x_n}))
   \\
      &\quad 
      \lambda_{n-1}(\tilde{a}_{n-1}^{[2]},\ell_{n-1}^{[2]})
   C_{b_n}(f\otimes g)(\tilde{x}_n^{[2]})
      F(\bm{a_{0:n-2}},\bm{x_{0:n-1}})
    \mu_{n-1}(d\bm{a_{0:n-2}},d\bm{x_{0:n-1}})
    \\=&
   \int
     \mathbb{Q}_{n}^{(2)}
     (\bm{x_{n-1}}, d(\tilde{a}_{n-1}^{[2]},\tilde{x}_n^{[2]}))\ 
      \lambda_{n-1}(\tilde{a}_{n-1}^{[2]},\ell_{n-1}^{[2]})
   C_{b_n}(f\otimes g)(\tilde{x}_n^{[2]})
   \\
      &\quad 
      F(\bm{a_{0:n-2}},\bm{x_{0:n-1}})
    \mu_{n-1}(d\bm{a_{0:n-2}},d\bm{x_{0:n-1}}).
  \end{aligned}
  $$
  Next, let us recall that 
  $$
  \mathbb{Q}_{n}^{(2)}(\bm{x_{n-1}},d(\tilde{a}_{n-1}^{[2]},\tilde{x}_{n}^{[2]}))
  :=
  m([N])^{\otimes 2}(d \tilde{a}_{n-1}^{[2]})\ 
  Q_{n,N}^{\otimes 2}(x_{n-1}^{\tilde{a}_{n-1}^{[2]}}, d\tilde{x}_n^{[2]})
  $$
  and
  $$
  \lambda_{n-1}(\tilde{a}_{n-1}^{[2]},\ell_{n-1}^{[2]})
  :=
  \mathbf{1}_{\{b_{n-1} = 1, \tilde{a}_{n-1}^1 =
      \tilde{a}_{n-1}^2 =  
  \ell_{n-1}^1\neq\ell_{n-1}^2\}}
  +
  \mathbf{1}_{\{b_{n-1} = 0, \tilde{a}_{n-1}^1  =
  \ell_{n-1}^1\neq\tilde{a}_{n-1}^2=\ell_{n-1}^2\}},
  $$
  whence we get the equality concerning the operator
  $C_{b_{n-1}}$.
  More precisely, if $b_{n-1} = 0$, we have
  \begin{equation}\label{pf:cond-exp3}
  \begin{aligned}
  &
   \int
     \mathbb{Q}_{n}^{(2)}
     (\bm{x_{n-1}}, d(\tilde{a}_{n-1}^{[2]},\tilde{x}_n^{[2]}))\ 
      \lambda_{n-1}(\tilde{a}_{n-1}^{[2]},\ell_{n-1}^{[2]})
   C_{b_n}(f\otimes g)(\tilde{x}_n^{[2]})
   \\
      &\quad 
      F(\bm{a_{0:n-2}},\bm{x_{0:n-1}})
    \mu_{n-1}(d\bm{a_{0:n-2}},d\bm{x_{0:n-1}})
    \\=&
   \int
      \frac{1}{N^2}
      Q_{n,N}^{\otimes 2}C_{b_n}(f\otimes g)(x_{n-1}^{\ell_{n-1}^{[2]}})
      F(\bm{a_{0:n-2}},\bm{x_{0:n-1}})
    \mu_{n-1}(d\bm{a_{0:n-2}},d\bm{x_{0:n-1}}).
  \end{aligned}
  \end{equation}
  Otherwise, if $b_{n-1} = 1$, we get, with the convention $x_{n-1}^{\ell_{n-1}^{(1,1)}}=(x_{n-1}^{\ell_{n-1}^{1}},x_{n-1}^{\ell_{n-1}^{1}})$,
  \begin{equation}\label{pf:cond-exp4}
  \begin{aligned}
  &
   \int
     \mathbb{Q}_{n}^{(2)}
     (\bm{x_{n-1}}, d(\tilde{a}_{n-1}^{[2]},\tilde{x}_n^{[2]}))\ 
      \lambda_{n-1}(\tilde{a}_{n-1}^{[2]},\ell_{n-1}^{[2]})
   C_{b_n}(f\otimes g)(\tilde{x}_n^{[2]})
   \\
      &\quad 
      F(\bm{a_{0:n-2}},\bm{x_{0:n-1}})
    \mu_{n-1}(d\bm{a_{0:n-2}},d\bm{x_{0:n-1}})
    \\=&
   \int
      \frac{1}{N^2}
      Q_{n,N}^{\otimes 2}C_{b_n}(f\otimes g)(x_{n-1}^{\ell_{n-1}^{(1,1)}})
      F(\bm{a_{0:n-2}},\bm{x_{0:n-1}})
    \mu_{n-1}(d\bm{a_{0:n-2}},d\bm{x_{0:n-1}}).
  \end{aligned}
  \end{equation}
  Combining \eqref{pf:cond-exp3} and \eqref{pf:cond-exp4}, we safely deduce
  that
  \begin{equation*}
  \begin{aligned}
  &
   \int
     \mathbb{Q}_{n}^{(2)}
     (\bm{x_{n-1}}, d(\tilde{a}_{n-1}^{[2]},\tilde{x}_n^{[2]}))\ 
      \lambda_{n-1}(\tilde{a}_{n-1}^{[2]},\ell_{n-1}^{[2]})
   C_{b_n}(f\otimes g)(\tilde{x}_n^{[2]})
   \\
      &\quad 
      F(\bm{a_{0:n-2}},\bm{x_{0:n-1}})
    \mu_{n-1}(d\bm{a_{0:n-2}},d\bm{x_{0:n-1}})
    \\=&
   \int
      \frac{1}{N^2}
      C_{b_{n-1}}Q_{n,N}^{\otimes 2}C_{b_n}(f\otimes g)(x_{n-1}^{\ell_{n-1}^{[2]}})
      F(\bm{a_{0:n-2}},\bm{x_{0:n-1}})
    \mu_{n-1}(d\bm{a_{0:n-2}},d\bm{x_{0:n-1}})
  \\=&
  \mathbf{E}\left[
    \frac{1}{N^2}
    C_{b_{n-1}}Q_{n,N}^{\otimes 2}C_{b_n}(f\otimes g)(X_{n-1}^{\ell_{n-1}^{[2]}})
    F(\mathbf{A_{0:n-2}},\mathbf{X_{0:n-1}})
  \right].
  \end{aligned}
  \end{equation*}
  In conclusion, we have established that
  \begin{equation*}
  \begin{aligned}
  &\mathbf{E}\left[
  m(\mathbf{X_{n-1}})(G_{n-1,N})^2
  \lambda_{n-1}^b(A_{n-1}^{\ell_n^{[2]}},\ell_{n-1}^{[2]})
  C_{b_n}
  (f\otimes g)(X_n^{\ell_n^{[2]}})
  F(\mathbf{A_{0:n-2}},\mathbf{X_{0:n-1}})
  \right]
  \\=&
  \mathbf{E}\left[
    \frac{1}{N^2}
    C_{b_{n-1}}Q_{n,N}^{\otimes 2}C_{b_n}(f\otimes g)(X_{n-1}^{\ell_{n-1}^{[2]}})
    F(\mathbf{A_{0:n-2}},\mathbf{X_{0:n-1}})
  \right],
  \end{aligned}
  \end{equation*}
  which terminates the verification of \eqref{pf:StepnLambda} and the proof of
  Proposition \ref{lm:LackOfBias}.

\end{proof}
\subsection{Some intuition}
In general, the coupled particle block does not necessarily
have the parents-children relations. Let us see a representation of the duality
formula given in Lemma \ref{lm:duality} 
recursively applied in a mini IPS from level 0 to level 5 to some
randomly chosen indices $\ell_{0:5}^{[2]}$ 
(see Figure
\ref{fig:mbfk1}).

\begin{figure}[htb]
  \centering
  \begin{tikzpicture}[scale = 0.8, every node/.style={transform shape}]
  \path [use as bounding box] (-6,-5) rectangle (6,2.5);

\tikzset{norm/.style={circle, black, draw=black,
fill=white, minimum size=1mm}}
\tikzset{dotted/.style={circle, black, draw=black, densely dotted,
fill=white, minimum size=1mm}}

\tikzset{noire/.style={circle, black, draw=black,
fill=black, minimum size=1mm}}

\node [style=norm] (01) at (-5, 2) {};
\node [style=dotted] (02) at (-5, 1) {};
\node [style=dotted] (03) at (-5, -0) {};
\node [style=noire] (04) at (-5, -1) {};
\node [style=noire] (05) at (-5, -2) {};
\node [] (step0) at (-5, -4.8) {step 0};

\node [style=dotted] (11) at (-3, 2) {};
\node [style=noire] (12) at (-3, 1) {};
\node [style=dotted] (13) at (-3, -0) {};
\node [style=noire] (14) at (-3, -1) {};
\node [style=dotted] (15) at (-3, -2) {};
\node [] (step1) at (-3, -4.8) {step 1};

\draw [arrows=-latex, style=densely dotted] (01) to (11);
\draw [arrows=-latex] (01) to (12);
\draw [arrows=-latex, style=densely dotted] (03) to (13);
\draw [arrows=-latex] (05) to (14);
\draw [arrows=-latex, style=densely dotted] (03) to (15);

\node [style=norm] (21) at (-1, 2) {};
\node [style=noire] (22) at (-1, 1) {};
\node [style=noire] (23) at (-1, -0) {};
\node [style=dotted] (24) at (-1, -1) {};
\node [style=dotted] (25) at (-1, -2) {};
\node [] (step2) at (-1, -4.8) {step 2};

\draw [arrows=-latex, style=densely dotted] (14) to (21);
\draw [arrows=-latex] (14) to (22);
\draw [arrows=-latex] (12) to (23);
\draw [arrows=-latex, style=densely dotted] (14) to (24);
\draw [arrows=-latex, style=densely dotted] (13) to (25);

\node [style=norm] (31) at (1, 2) {};
\node [style=norm] (32) at (1, 1) {};
\node [style=noire] (33) at (1, -0) {};
\node [style=dotted] (34) at (1, -1) {};
\node [style=noire] (35) at (1, -2) {};
\node [] (step3) at (1, -4.8) {step 3};

\draw [arrows=-latex, style=densely dotted] (25) to (31);
\draw [arrows=-latex, style=densely dotted] (22) to (32);
\draw [arrows=-latex] (21) to (33);
\draw [arrows=-latex, style=densely dotted] (25) to (34);
\draw [arrows=-latex] (22) to (35);

\node [style=noire] (41) at (3, 2) {};
\node [style=norm] (42) at (3, 1) {};
\node [style=norm] (43) at (3, -0) {};
\node [style=dotted] (44) at (3, -1) {};
\node [style=noire] (45) at (3, -2) {};
\node [] (step4) at (3, -4.8) {step 4};

\draw [arrows=-latex] (31) to (41);
\draw [arrows=-latex, style=densely dotted] (32) to (42);
\draw [arrows=-latex, style=densely dotted] (34) to (43);
\draw [arrows=-latex, style=densely dotted] (32) to (44);
\draw [arrows=-latex] (32) to (45);

\node [style=noire] (51) at (5, 2) {};
\node [style=noire] (52) at (5, 1) {};
\node [style=dotted] (53) at (5, -0) {};
\node [style=dotted] (54) at (5, -1) {};
\node [style=dotted] (55) at (5, -2) {};
\node [] (step5) at (5, -4.8) {step 5};

\draw [arrows=-latex] (42) to (51);
\draw [arrows=-latex] (43) to (52);
\draw [arrows=-latex, style=densely dotted] (45) to (53);
\draw [arrows=-latex, style=densely dotted] (43) to (54);
\draw [arrows=-latex, style=densely dotted] (43) to (55);

\node [style=noire] (x0) at (-5, -4) {};
\node [style=noire] (y0) at (-5, -3) {};

\node [style=noire] (x1) at (-3, -4) {};
\node [style=noire] (y1) at (-3, -3) {};

\node [style=noire] (x2) at (-1, -4) {};
\node [style=noire] (y2) at (-1, -3) {};

\node [style=noire] (x3) at (1, -4) {};
\node [style=noire] (y3) at (1, -3) {};

\node [style=noire] (x4) at (3, -4) {};
\node [style=noire] (y4) at (3, -3) {};

\node [style=noire] (x5) at (5, -4) {};
\node [style=noire] (y5) at (5, -3) {};

\draw [arrows=-latex, in=180, out = 0] (01) to (x1);
\draw [arrows=-latex, in=180, out = 0] (05) to (y1);

\draw [arrows=-latex, in=180, out = 0] (14) to (x2);
\draw [arrows=-latex, in=180, out = 0] (12) to (y2);

\draw [arrows=-latex, in=180, out = 0] (21) to (x3);
\draw [arrows=-latex, in=180, out = 0] (22) to (y3);

\draw [arrows=-latex, in=180, out = 0] (32) to (x4);
\draw [arrows=-latex, in=180, out = 0] (31) to (y4);

\draw [arrows=-latex, in=180, out = 0] (43) to (x5);
\draw [arrows=-latex, in=180, out = 0] (42) to (y5);

\draw[style=densely dashed] (-6,-2.5) to (6,-2.5);
\node at (-6,-3.5) {coupled};
\node at (-6, 0) {original};

\draw [arrows=latex-latex, in=-135-90, out=-45-90,densely dotted,red!50!black] (05) to (y0);
\draw [arrows=latex-latex, in=-135-90, out=-45-90,densely dotted,red!50!black] (04) to (x0);
\draw [arrows=latex-latex, in=-135-90, out=-45-90,densely dotted,red!50!black] (14) to (y1);
\draw [arrows=latex-latex, in=-135-90, out=-45-90,densely dotted,red!50!black] (12) to (x1);
\draw [arrows=latex-latex, in=-135-90, out=-45-90,densely dotted,red!50!black] (23) to (y2);
\draw [arrows=latex-latex, in=-135-90, out=-45-90,densely dotted,red!50!black] (22) to (x2);

\draw [arrows=latex-latex, in=-135-90, out=-45-90,densely dotted,red!50!black] (35) to (y3);
\draw [arrows=latex-latex, in=-135-90, out=-45-90,densely dotted,red!50!black] (33) to (x3);
\draw [arrows=latex-latex, in=-135-90, out=-45-90,densely dotted,red!50!black] (41) to (y4);
\draw [arrows=latex-latex, in=-135-90, out=-45-90,densely dotted,red!50!black] (45) to (x4);
\draw [arrows=latex-latex, in=-135-90, out=-45-90,densely dotted,red!50!black] (51) to (y5);
\draw [arrows=latex-latex, in=-135-90, out=-45-90,densely dotted,red!50!black] (52) to (x5);

\end{tikzpicture}
\caption{An illustration of the duality formula recursively applied to a 
mini IPS of $n+1 = 6$ levels with 5 particles at each level. 
Every straight black or dotted arrow within the original IPS represents a
Markov transition $M_{p,N}$ and the black twisted ones pointing to the particles in the
coupled particle block represent the Feynman-Kac transition kernels $Q_{p,N}$.
The red dotted bending arrows are identities.
The indices of the original particles in the coupled particle block
are $\ell_0^{[2]} = (4,5)$, $\ell_1^{[2]} = (2,4)$,
$\ell_2^{[2]} = (2,3)$, $\ell_3^{[2]} = (3,5)$, $\ell_4^{[2]} = (1,5)$
and $\ell_5^{[2]} = (1,2)$.}
\label{fig:mbfk1}
\end{figure}
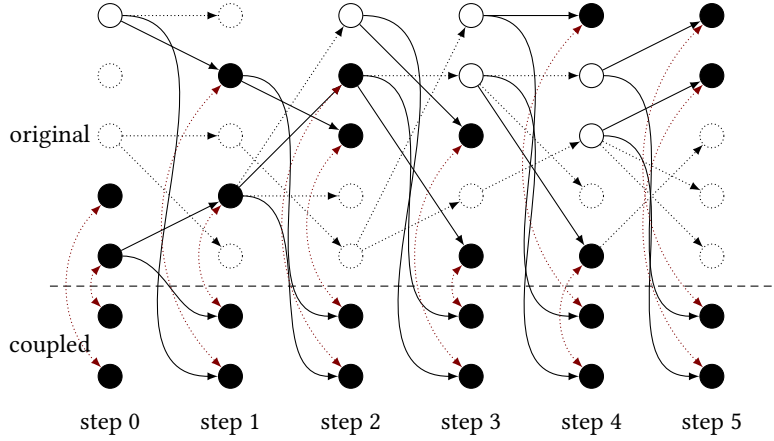
However, we can get any ancestral relations or
coalescent tree-based form by manipulating the genealogical
information encoded in the coupled genealogy. This is the essential
idea we used by introducing many-body Feynman-Kac models.
To make it clearer, we consider an event defined by
\begin{equation}\label{event-coal}
\left\{
  \ell_{p-2}^{[2]} = \tilde{A}_{p-2}^{[2]},
  \ell_{p-1}^1 = \tilde{A}_{p-1}^1 = \tilde{A}_{p-1}^2\neq \ell_{p-1}^2,
  \ell_p^{[2]} = \tilde{A}_p^{[2]}
\right\}.
\end{equation}
On this event, we are able to track the coalescent tree-based
form as in Figure \ref{fig:mbfk2}. 
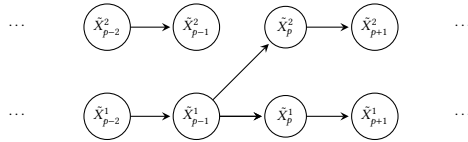
\begin{figure}[H]
  \centering
  \begin{tikzpicture}[->,>=stealth, shorten >=1pt,node
    distance=75pt, auto,transform shape, scale=0.45]
  \tikzset{every state/.style={minimum size=35pt,scale=1}}
  
  \node [state] (1p-2)      {$\tilde{X}_{p-2}^1$};
  \node [state] (1p-1)   [right of = 1p-2]   {$\tilde{X}_{p-1}^1$};
  \node         (dots1)  [left of = 1p-2]   {$\cdots$};
  \node [state] (1p)   [right of = 1p-1]   {$\tilde{X}_p^1$};
  \node [state] (1p+1)   [right of = 1p]   {$\tilde{X}_{p+1}^1$};
  \node         (dots11)  [right of = 1p+1]   {$\cdots$};
  \node [state] (2p)   [above of = 1p]   {$\tilde{X}_p^2$};
  \node [state] (2p+1)   [right of = 2p]   {$\tilde{X}_{p+1}^2$};
  \node         (dots22)  [right of = 2p+1]   {$\cdots$};
  \node [state] (2p-2)   [above of = 1p-2]   {$\tilde{X}_{p-2}^2$};
  \node [state] (2p-1)   [right of = 2p-2]   {$\tilde{X}_{p-1}^2$};
  \node         (dots2)  [left of = 2p-2]   {$\cdots$};
  \path (1p-2) edge[above]  (1p-1);
  \path (2p-2) edge[above]  (2p-1);
  \path (1p-1) edge[above]  (2p);
  \path (1p-1) edge[above]  (1p);
  \path (1p-1) edge[above]  (1p);
  \path (1p) edge[above]  (1p+1);
  \path (2p) edge[above] (2p+1);
  \end{tikzpicture}
  \caption{The coupled particle block tracked by the event defined by
  \eqref{event-coal}.}
  \label{fig:mbfk2}
\end{figure} 
The coupled particle block and its genealogy are defined as the copies of
certain particles and parents indices in the associated original IPS.  On one
hand, we select certain events such that the desired structure is trapped in
the coupled particle block. On the other hand, we define the estimator based on
the information reflected in the original IPS as no additional 
randomness are added by introducing the
coupled particle block. Since their distributions
are connected by the duality formula, we can use the information coded in the
original IPS to estimate the measures corresponding to these coalescent
tree-based particle blocks (see Figure \ref{alkcnalkcn}).  

\begin{figure}[htb]
  \centering
  \begin{tikzpicture}[scale = 0.8, every
      node/.style={transform shape}]
    \path [use as bounding box] (-6,-5) rectangle (6,2.5);

  \tikzset{norm/.style={circle, black, draw=black,densely dotted,
  fill=white, minimum size=1mm}}

  \tikzset{noire/.style={circle, black, draw=black,
  fill=black, minimum size=1mm}}

  \node [style=norm] (01) at (-5, 2) {};
  \node [style=norm] (02) at (-5, 1) {};
  \node [style=noire] (03) at (-5, -0) {};
  \node [style=norm] (04) at (-5, -1) {};
  \node [style=noire] (05) at (-5, -2) {};
  \node [] (step0) at (-5, -4.8) {step 0};

  \node [style=norm] (11) at (-3, 2) {};
  \node [style=norm] (12) at (-3, 1) {};
  \node [style=noire] (13) at (-3, -0) {};
  \node [style=noire] (14) at (-3, -1) {};
  \node [style=norm] (15) at (-3, -2) {};
  \node [] (step1) at (-3, -4.8) {step 1};

  \draw [arrows=-latex,style=densely dotted] (01) to (11);
  \draw [arrows=-latex,style=densely dotted] (01) to (12);
  \draw [arrows=-latex] (03) to (13);
  \draw [arrows=-latex] (05) to (14);
  \draw [arrows=-latex,style=densely dotted] (03) to (15);

  \node [style=norm] (21) at (-1, 2) {};
  \node [style=noire] (22) at (-1, 1) {};
  \node [style=norm] (23) at (-1, -0) {};
  \node [style=norm] (24) at (-1, -1) {};
  \node [style=noire] (25) at (-1, -2) {};
  \node [] (step2) at (-1, -4.8) {step 2};

  \draw [arrows=-latex,style=densely dotted] (14) to (21);
  \draw [arrows=-latex] (14) to (22);
  \draw [arrows=-latex,style=densely dotted] (12) to (23);
  \draw [arrows=-latex,style=densely dotted] (14) to (24);
  \draw [arrows=-latex] (13) to (25);

  \node [style=noire] (31) at (1, 2) {};
  \node [style=noire] (32) at (1, 1) {};
  \node [style=norm] (33) at (1, -0) {};
  \node [style=norm] (34) at (1, -1) {};
  \node [style=norm] (35) at (1, -2) {};
  \node [] (step3) at (1, -4.8) {step 3};

  \draw [arrows=-latex] (25) to (31);
  \draw [arrows=-latex] (22) to (32);
  \draw [arrows=-latex,style=densely dotted] (21) to (33);
  \draw [arrows=-latex,style=densely dotted] (25) to (34);
  \draw [arrows=-latex,style=densely dotted] (22) to (35);

  \node [style=norm] (41) at (3, 2) {};
  \node [style=noire] (42) at (3, 1) {};
  \node [style=norm] (43) at (3, -0) {};
  \node [style=norm] (44) at (3, -1) {};
  \node [style=noire] (45) at (3, -2) {};
  \node [] (step4) at (3, -4.8) {step 4};

  \draw [arrows=-latex,style=densely dotted] (31) to (41);
  \draw [arrows=-latex] (32) to (42);
  \draw [arrows=-latex,style=densely dotted] (34) to (43);
  \draw [arrows=-latex,style=densely dotted] (32) to (44);
  \draw [arrows=-latex] (32) to (45);

  \node [style=noire] (51) at (5, 2) {};
  \node [style=norm] (52) at (5, 1) {};
  \node [style=noire] (53) at (5, -0) {};
  \node [style=norm] (54) at (5, -1) {};
  \node [style=norm] (55) at (5, -2) {};
  \node [] (step5) at (5, -4.8) {step 5};

  \draw [arrows=-latex] (42) to (51);
  \draw [arrows=-latex,style=densely dotted] (43) to (52);
  \draw [arrows=-latex] (45) to (53);
  \draw [arrows=-latex,style=densely dotted] (43) to (54);
  \draw [arrows=-latex,style=densely dotted] (43) to (55);

  \node [style=noire] (x0) at (-5, -4) {};
  \node [style=noire] (y0) at (-5, -3) {};

  \node [style=noire] (x1) at (-3, -4) {};
  \node [style=noire] (y1) at (-3, -3) {};

  \node [style=noire] (x2) at (-1, -4) {};
  \node [style=noire] (y2) at (-1, -3) {};

  \node [style=noire] (x3) at (1, -4) {};
  \node [style=noire] (y3) at (1, -3) {};

  \node [style=noire] (x4) at (3, -4) {};
  \node [style=noire] (y4) at (3, -3) {};
  
  \node [style=noire] (x5) at (5, -4) {};
  \node [style=noire] (y5) at (5, -3) {};

  \draw [arrows=-latex] (x0) to (x1);
  \draw [arrows=-latex] (y0) to (y1);

  \draw [arrows=-latex] (x1) to (x2);
  \draw [arrows=-latex] (y1) to (y2);

  \draw [arrows=-latex] (x2) to (x3);
  \draw [arrows=-latex] (y2) to (y3);

  \draw [arrows=-latex] (x3) to (x4);
  \draw [arrows=-latex] (x3) to (y4);

  \draw [arrows=-latex] (x4) to (x5);
  \draw [arrows=-latex] (y4) to (y5);

  \draw[style=densely dashed] (-6,-2.5) to (6,-2.5);
  \node at (-6,-3.5) {coupled};
  \node at (-6, 0) {original};
  
  \draw [arrows=latex-latex, in=-135-90, out=-45-90,densely dotted,red!50!black] (03) to (y0);
  \draw [arrows=latex-latex, in=-135-90, out=-45-90,densely dotted,red!50!black] (05) to (x0);
  \draw [arrows=latex-latex, in=-135-90, out=-45-90,densely dotted,red!50!black] (13) to (y1);
  \draw [arrows=latex-latex, in=-135-90, out=-45-90,densely dotted,red!50!black] (14) to (x1);
  \draw [arrows=latex-latex, in=-135-90, out=-45-90,densely dotted,red!50!black] (22) to (y2);
  \draw [arrows=latex-latex, in=-135-90, out=-45-90,densely dotted,red!50!black] (25) to (x2);

  \draw [arrows=latex-latex, in=-135-90, out=-45-90,densely dotted,red!50!black] (31) to (y3);
  \draw [arrows=latex-latex, in=-135-90, out=-45-90,densely dotted,red!50!black] (32) to (x3);
  \draw [arrows=latex-latex, in=-135-90, out=-45-90,densely dotted,red!50!black] (42) to (y4);
  \draw [arrows=latex-latex, in=-135-90, out=-45-90,densely dotted,red!50!black] (45) to (x4);
  \draw [arrows=latex-latex, in=-135-90, out=-45-90,densely dotted,red!50!black] (51) to (y5);
  \draw [arrows=latex-latex, in=-135-90, out=-45-90,densely dotted,red!50!black] (53) to (x5);
  \end{tikzpicture}
\caption{
An illustration of the duality formula recursively applied to a 
mini IPS of $n+1 = 6$ levels with 5 particles at each level. 
Every straight black or dotted arrow within the original IPS represents a
Markov transition $M_{p,N}$ and the black ones within the
coupled particle block represent the Feynman-Kac transition kernels $Q_{p,N}$.
The red dotted bending arrows are identities.
The indices of the original particles in the coupled particle block
are $\ell_0^{[2]} = (3,5)$, $\ell_1^{[2]} = (3,4)$,
$\ell_2^{[2]} = (2,5)$, $\ell_3^{[2]} = (1,2)$, $\ell_4^{[2]} = (2,5)$
and $\ell_5^{[2]} = (1,3)$.}
\label{alkcnalkcn}
\end{figure}
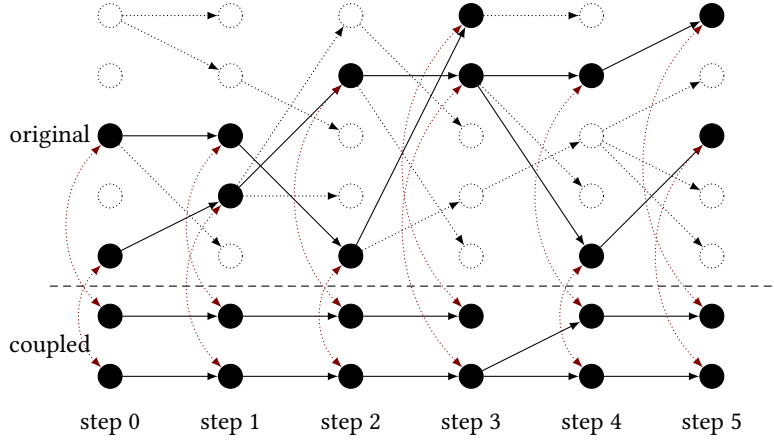
The duality formula provides a way to touch the
adaptive versions of the coalescent tree-based measures $\Gamma_n^b$, i.e., all
the Feynman-Kac transition kernels $Q_{p}$ in the definition are replaced with
the adaptive version $Q_{p,N}$. 
This is the idea underlying the construction of the estimators $\Gamma_{n,N}^b$.

\subsection{Connection with SMC}\label{final-remarks}
To conclude, let us say a few words about the behavior of
$\Gamma_{n,N}^b$. One remark is that,
in general, this estimator is not unbiased in the ASMC framework. This is
a consequence of the adaptive parametrization, as witnessed by Lemma \ref{lm:cvg-Gamma-Q}. On the opposite, in a 
nonadaptive case (SMC), the estimation is unbiased, 
exactly as $\gamma_n^N$ is an unbiased
estimation of $\gamma_n$ (see for example \cite{del2004feynman-kac} Section 3.5.1).
It turns out that
the classical SMC framework corresponds to the case where the function $h_{n}$ in
$\mathcal{A}\ref{A2}$ is equal to zero, meaning that $Q_{n,N}=Q_n$ for all $n$. Thus, Lemma
\ref{lm:cvg-proba-Gamma} and 
\eqref{eq:decomp-Gamma-N} give the following proposition. 
\begin{proposition}\label{prop:unbiased-SMC}
  Assume $\mathcal{A}\ref{A1}$-$\mathcal{A}\ref{A2}$ and suppose that $h_{n}\equiv 0$ for all $n\geq 0$. Then, for all test
  functions $f,g\in\mathcal{B}_b(E_n)$, 
  $$
  \mathbf{E}\left[\Gamma_{n,N}^b(f\otimes g)\right]
  =
  \Gamma_{n}^b(f\otimes g).
  $$
  In particular, we also have
  $$
  \mathbf{E}\left[\gamma_n^N(1)^2V_n^N(f)\right]
  =
  \mathrm{Var}
  \left[\gamma_n^N(f)\right].
  $$
\end{proposition}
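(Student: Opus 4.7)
The plan is to establish the first identity by induction on $n$, exploiting the fact that $h_{n}\equiv 0$ forces $Q_{n,N}=Q_n$ for every $n$ and every realization of the particle system, and then to deduce the second identity as an algebraic corollary using the unbiasedness of $\gamma_n^N$ in the classical SMC setting.

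For the base case $n=0$, the identity $\mathbf{E}[\Gamma_{0,N}^b(\phi\otimes\psi)]=\Gamma_0^b(\phi\otimes\psi)$ was already verified at Step 0 in the proof of Theorem \ref{thm:coalescent-measure}, separately for $b_0=0$ and $b_0=1$. For the inductive step, I would apply Lemma \ref{lm:cvg-proba-Gamma}, which gives
$$
\mathbf{E}\bigl[\Gamma_{n,N}^b(f\otimes g)\,\big\vert\,\mathscr{G}_{n-1}^N\bigr]
=\Gamma_{n-1,N}^b\,Q_{n,N}^{\otimes 2}C_{b_n}(f\otimes g),
$$
and the hypothesis $h_{n-1}\equiv 0$ (via $\mathcal{A}$\ref{A2}) replaces $Q_{n,N}$ by $Q_n$ on the right-hand side. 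By \eqref{cnaocn}, the function $Q_n^{\otimes 2}C_{b_n}(f\otimes g)$ admits a tensor decomposition $\tilde f\otimes \tilde g$ with $\tilde f,\tilde g\in\mathcal{B}_b(E_{n-1})$, so taking total expectation and invoking the induction hypothesis on $\Gamma_{n-1,N}^b$ yields
$$
\mathbf{E}\bigl[\Gamma_{n,N}^b(f\otimes g)\bigr]
=\Gamma_{n-1}^b\,Q_n^{\otimes 2}C_{b_n}(f\otimes g)
=\Gamma_n^b(f\otimes g),
$$
where the last equality is the recursive definition of $\Gamma_n^b$.

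For the second identity, I would start from the rewriting
$$
\gamma_n^N(1)^2 V_n^N(f)=\gamma_n^N(f)^2-\Gamma_{n,N}^{(\varnothing)}(f^{\otimes 2}).
$$
Applying the first part of the proposition with the null coalescence indicator $b=(\varnothing)=(0,\dots,0)$ gives $\mathbf{E}[\Gamma_{n,N}^{(\varnothing)}(f^{\otimes 2})]=\Gamma_n^{(\varnothing)}(f^{\otimes 2})=\gamma_n(f)^2$, the last equality being a consequence of Definition \ref{aecac}. Since in the classical SMC framework ($h_n\equiv 0$) the estimator $\gamma_n^N(f)$ is unbiased (see Section 3.5.1 in \cite{del2004feynman-kac}), one has $\mathbf{E}[\gamma_n^N(f)]^2=\gamma_n(f)^2$, hence
$$
\mathbf{E}\bigl[\gamma_n^N(1)^2 V_n^N(f)\bigr]
=\mathbf{E}\bigl[\gamma_n^N(f)^2\bigr]-\mathbf{E}\bigl[\gamma_n^N(f)\bigr]^2
=\mathrm{Var}\bigl[\gamma_n^N(f)\bigr].
$$

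The argument contains no real obstacle: everything rests on the exact conditional identity already proved in Lemma \ref{lm:cvg-proba-Gamma} together with the tensor property \eqref{cnaocn}. The only conceptual point worth stressing is that in the classical SMC case, the residual term $R_2(N)$ in decomposition \eqref{eq:decomp-Gamma-N} is identically zero rather than merely $\mathscr{O}_{\mathbf{p}}(1/\sqrt{N})$, which upgrades the consistency statement of Theorem \ref{thm:coalescent-measure} to exact unbiasedness.
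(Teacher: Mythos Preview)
Your proposal is correct and follows exactly the route the paper indicates: the sentence preceding the proposition says ``Lemma \ref{lm:cvg-proba-Gamma} and \eqref{eq:decomp-Gamma-N} give the following proposition,'' and your induction via the conditional identity of Lemma \ref{lm:cvg-proba-Gamma}, the tensor decomposition \eqref{cnaocn}, and the vanishing of $R_2(N)$ in \eqref{eq:decomp-Gamma-N} when $h_{n-1}\equiv 0$ is precisely that argument spelled out. One small imprecision: for $b_0=0$, Step~0 in the proof of Theorem \ref{thm:coalescent-measure} does not explicitly compute the expectation (it only establishes the $\mathscr{O}_{\mathbf p}(1/\sqrt N)$ rate), but the unbiasedness there is immediate from \eqref{sjcn} since $m^{\odot 2}(\mathbf{X_0})(\phi\otimes\psi)$ is a U-statistic with expectation $\eta_0(\phi)\eta_0(\psi)$.
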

In fact, the essential technical results in Section
\ref{sec:TR} and Section \ref{zicj} only require 
$\mathcal{A}\ref{A1}$. 
In other words, $\mathcal{A}\ref{A2}$
can be studied separately in order to adapt to applications not covered in
this article.

Another remark is about the difference between $\Gamma_{n,N}^b$ and $\mu_b$
as defined in Section 3.2. of \cite{lee2018var} in the nonadaptive context. 
However, since it is not straightforward to
compare these estimators that are extremely notation-heavy, we would just like to
briefly and heuristically 
mention that the main difference comes from the step where there is
a coalescence, namely $b_p=1$. If we consider Figure \ref{kjzcbxkzjxb} in Section \ref{ajsncoac},
our estimator is not the
most ``precise'' that one could propose. Let us look at
the case where
$$
\ell_{0:6}^{[2]} = \left(
(5,3),(4,3),(2,5),(2,4),(2,5),(1,3),(2,4)\right).
$$ 
For the terminal point $X_3^4$, 
the conditional distribution of $A_2^4$ is simply the categorical distribution since $X_3^4$ is a 
terminal point.
Roughly speaking, once all the genealogy of the terminal points is
calculated, one can deduce $\mu_b$. Hence, the take-home message is simple:
if one is interested in estimating
$\Gamma_n^b$ numerically, then the estimator $\mu_b$ proposed in \cite{lee2018var}
is expected to be more accurate, meaning that the variance should be smaller in general.

Nevertheless, as a theoretical tool, our estimator is easier to deal with in the adaptive
framework. 
Indeed, induction is highly involved in our proof
of consistency, so estimators that are stepwise easy to manipulate are required.  
Another difference is that we do not use instrumental random variables such
as $K^1$ and $K^2$ in the definition of $\mu_b$. This also simplifies the
analysis in an adaptive context where there is already more randomness than in a 
nonadaptive context. 

\section{Numerical experiment}
\label{sec:num}
We provide in this section a numerical experiment based on the same toy example
as the one presented in
Section 4.1 of \cite{beskos2016ASMC}. In particular, this ensures that assumptions $\mathcal{A}$\ref{A1}-$\mathcal{A}$\ref{A2} are satisfied.
Namely, consider a sequence of centered Gaussian
target distributions $(\eta_n;0\leq n\leq 50)$ on $\mathbf{R}^{10}$ given by
$$
\eta_n(x)\propto \exp\left(-\frac{1}{2}\left\langle x, \Sigma_n^{-1}
x\right\rangle\right).
$$
Denote by $\mathrm{Id}$ the identity matrix on $\mathbf{R}^{10}$ and 
$\mathrm{J}$ the lower triangular matrix such that $\mathrm{J}_{ij} = 1$ for
$1\leq j \leq i-1\leq 9$. 
The covariance matrices are defined by
$$
\Sigma_n = \mathrm{L}_n \mathrm{L}_n^{\mathrm{T}}, \text{ with } \mathrm{L}_n = \left(10\left(1- \frac{n}{99}\right) +
\frac{1}{10}\frac{n}{99} \right) \mathrm{Id} +
\frac{1}{2}\frac{n}{99}\mathrm{J}.
$$
Thus, the initial distribution $\eta_0$ consists in 10 centered and independent Gaussian
components with variance 10. As $n$ grows, the covariance 
structure becomes more complicated. 
We consider an implementation of SMC with (nonadaptive) potential functions
$$G_n(x) :=
\exp\left(-\frac{1}{2}\left\langle x, \left(\Sigma_{n+1}^{-1} - \Sigma_n^{-1} \right)
x\right\rangle\right),$$ 
and some random walk Metropolis
kernels $M_{n}$ such that, at each step,
$M_{n}$ is reversible with respect to $\eta_n$. 
In this scenario, a popular choice for $M_n$ is based on the Gaussian proposal with covariance matrix $\Sigma_n$. This is the ``limiting'' (nonadaptive) scenario that we will consider in the sequel. When one does not know the covariance matrices $\Sigma_n$, a natural choice is to use the estimated
covariance matrix $\Sigma_n^N$. Our goal is to compare the respective behaviors of
adaptive SMC and nonadaptive SMC. In particular,
we want to show that the Lee and Whiteley variance estimator, in an adaptive context,  goes to the
asymptotic variance of the ``limiting'' (nonadaptive) SMC when $N$ grows. 

For this, we consider the test function $f :\mathbf{R}^{10}\ni(x^{(1)},x^{(2)},\dots,x^{(10)})\mapsto
x^{(1)}\in \mathbf{R}$.
Keeping the notation of the previous sections, we illustrate the asymptotic
variance estimators $NV_n^N\left(f-\eta_n^N(f)\right)$, 
which estimate the asymptotic variances of $\eta_n^N(f)$
respectively for the adaptive and
nonadaptive SMC algorithms, see Figure \ref{fig:num}. On the latter, the so-called reference value is the estimation of the theoretical asymptotic variance $\sigma^2_{\eta_n}(f-\eta_n(f))$. This value is estimated via Crude Monte Carlo through $2\times 10^3$
independent
runs of nonadaptive SMC with $N = 5\times 10^3$ (notice that, stricto sensu, it does of course not depend on N). 
At each iteration of the algorithm, the
random walk Metropolis kernel is applied 4 times in order to ensure a
certain level of acceptance. 

\begin{figure}[htb]
\centering
\begin{tabular}{c}
\subf{\includegraphics[width=16cm]{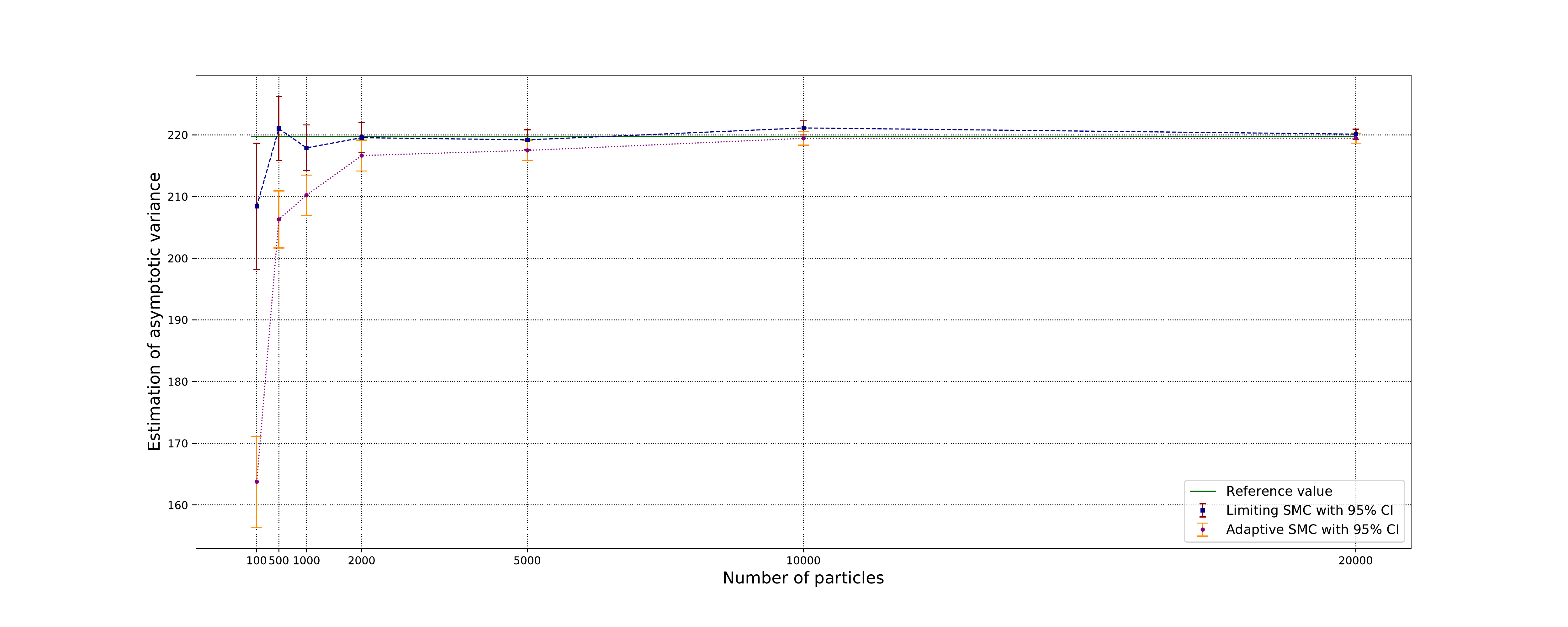}}
{(a) Variance estimation of $\eta_n^N(f)$ with $n=10$.}
\\
\subf{\includegraphics[width=16cm]{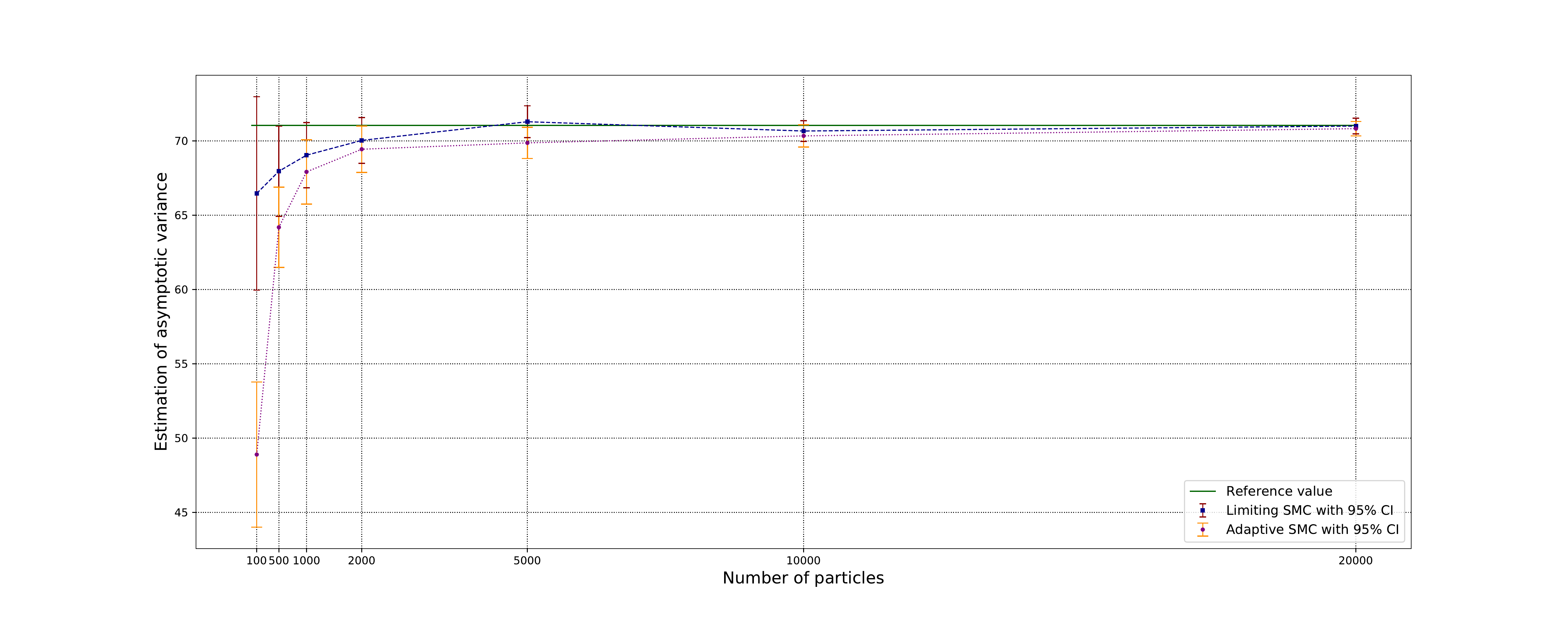}}
     {(b) Variance estimation of $\eta_n^N(f)$ with $n=50$.}
\end{tabular}
\caption{Variance estimators for adaptive and
nonadaptive ``limiting'' SMC for $n=10$ and $n=50$ with $N$
varying from $100$ to $2\times10^4$. We trace
the variance estimators and its $95\%$ confidential intervals based on $500$
independent runs of each algorithm. The reference values $\sigma^2_{\eta_n}(f-\eta_n(f))$ are obtained through $2\times 10^3$
independent
runs of nonadaptive ``limiting'' SMC with $N = 5\times 10^3$.}
\label{fig:num}
\end{figure}

It is clear that when
$N$ is relatively small, the variance estimations are biased. However, as the
number $N$ of particles
grows, we see that the behaviors of the adaptive and nonadaptive algorithms
are similar in terms of asymptotic variance estimations. More precisely, the overlaps
of the $95\%$ confidential intervals indicate that the adaptive SMC algorithm
is indeed very ``close'' to its nonadaptive ``limiting'' counterpart. As expected, both
variance estimators 
converge to the reference value $\sigma^2_{\eta_n}(f-\eta_n(f))$ as $N$ grows.

\section{Truncated variance estimators}
\label{jscnnlncln}
As mentioned in \cite{lee2018var}, their variance estimators degenerate when
$n$ is very large compared to the number $N$ of particles.
Typically, no disjoint ancestral lines exist in such
a particle system. In this case, we recommend to use the same kind of
fixed-lag variance estimators as the ones proposed in \cite{olsson2019}. More precisely,
we only use part of the genealogy of the particle system (e.g., truncated at
time $n-H$ for a relatively small lag $H\in \mathbf{N}^*$) to
construct the variance estimators. Hereafter, we provide a heuristic in order to
justify the relevance of such estimators in practice. 

In general, the application of SMC sampling on a large time scale requires some ``forgetting'' (mixing) properties
of the underlying model. For example, in the toy example presented in Appendix
\ref{sec:num}, if the Metropolis kernel is implemented a large amount of times
at each iteration, the particles will somehow ``forget'' the dependence caused
by the genealogy. 
Basically, in terms of coalescent tree-based measures, $\bar{\Gamma}_n^{(p)}$
would be very ``close'' to the disjoint ancestral lines based measure
$\bar{\Gamma}_n^{(\varnothing)}$.
More concretely, if we look at the asymptotic variance
$\sigma_{\eta_n}^2$, we have
$$
\sigma_{\eta_n}^2 (f)
= 
\underbrace{
  \sum_{p=0}^{n-H-1} \left(\bar{\Gamma}_{n}^{(p)}(f^{\otimes 2})
  -\bar{\Gamma}_{n}^{(\varnothing)}(f^{\otimes 2})\right)
}_{\text{small by the ``forgetting'' properties of the model}}
+
\sum_{p=n-H}^{n} \left(\bar{\Gamma}_{n}^{(p)}(f^{\otimes 2})
-\bar{\Gamma}_{n}^{(\varnothing)}(f^{\otimes 2})\right).
$$
Accordingly, a natural idea is to estimate only the second part of the right hand side
in order to approximate the asymptotic variance. A truncated term by term estimator
can therefore be defined as
$$
\sum_{p=n-H}^{n} \left(\bar{\Gamma}_{n,N}^{(p)}(f^{\otimes 2})
-\bar{\Gamma}_{n,N}^{(\varnothing)}(f^{\otimes 2})\right).
$$
Unfortunately, this estimator is not always numerically stable, as it requires
that disjoint ancestral lines exist in the particle system from time $0$ to time $n-H$. Following 
the same mechanism as in the proofs of Proposition \ref{lm:convergence-proba} and 
Proposition \ref{prop:decomposition-fixed}, we can show that
$$
N \left(\eta_n^N(f)^2 - \bar{\Gamma}_{n,N}^{(\varnothing,H)}(f^{\otimes 2})\right)
  \approx
  \sum_{p=n-H}^{n} \left(\bar{\Gamma}_{n,N}^{(p)}(f^{\otimes 2})
  -\bar{\Gamma}_{n,N}^{(\varnothing)}(f^{\otimes 2})\right),
$$
where, if $E_n^i(H)$ denotes the index of the ancestor of $X_n^i$ at step $n-H$,
$$
\bar{\Gamma}_{n,N}^{(\varnothing,H)}(f^{\otimes 2})
:= 
\frac{N^{H-1}}{(N-1)^{H+1}}\sum_{E_{n}^i(H)\neq E_{n}^j(H)} f(X_n^i)
f(X_n^j).
$$ 
The estimator $N(\eta_n^N(f)^2 - \bar{\Gamma}_{n,N}^{(\varnothing,H)}(f^{\otimes 2}))$
is more or less the one proposed in \cite{olsson2019} and is indeed a truncated version of $NV_n^N(f)$ proposed in \cite{lee2018var}. We refer the interested reader to \cite{olsson2019} for theoretical results as well as numerical illustrations.
When $H$ is properly chosen, the fixed-lag variance estimator is expected to be
able to balance the memory and the degeneracy of the genealogy of the particle
system.
However, finding a suitable $H$ in a specific application is highly nontrivial. As
explained in \cite{olsson2019}, it is then natural to consider adaptive mechanisms to
determine $H$. Nevertheless, to the best of our knowledge, this is still an open problem, which is beyond the scope of the present paper.

\end{appendices}

\section*{Acknowledgements}
This work was partially supported by the French Agence Nationale de la Recherche, under grant ANR-14-CE23-0012, and by the European Research Council under the European Union's Seventh Framework Programme (FP/2007-2013) / ERC Grant Agreement number 614492.


\end{document}